\documentclass[11pt]{amsart}

\usepackage[T1]{fontenc}
\usepackage[utf8]{inputenc}
\usepackage{lmodern}
\usepackage{microtype}
\usepackage{amsmath,amssymb,amsthm,mathtools}
\usepackage{booktabs}
\usepackage[hidelinks]{hyperref}
\hypersetup{
  pdftitle={Ptolemaic negative type and the values q(6) and q(7)},
  pdfauthor={Eren Ercan},
  pdfsubject={Ptolemaic negative type and triangular-hyperfield thresholds},
  pdfkeywords={Lorentzian polynomial, triangular hyperfield, uniform matroid, Ptolemaic metric, negative type, Schoenberg matrix, metric inversion}
}
\usepackage{aliascnt}
\usepackage[nameinlink,capitalise]{cleveref}

\newtheorem{theorem}{Theorem}[section]
\newaliascnt{proposition}{theorem}
\newtheorem{proposition}[proposition]{Proposition}
\aliascntresetthe{proposition}
\newaliascnt{lemma}{theorem}
\newtheorem{lemma}[lemma]{Lemma}
\aliascntresetthe{lemma}
\newaliascnt{corollary}{theorem}
\newtheorem{corollary}[corollary]{Corollary}
\aliascntresetthe{corollary}
\theoremstyle{definition}
\newaliascnt{definition}{theorem}
\newtheorem{definition}[definition]{Definition}
\aliascntresetthe{definition}
\theoremstyle{remark}
\newaliascnt{remark}{theorem}
\newtheorem{remark}[remark]{Remark}
\aliascntresetthe{remark}

\crefname{theorem}{theorem}{theorems}
\Crefname{theorem}{Theorem}{Theorems}
\crefname{proposition}{proposition}{propositions}
\Crefname{proposition}{Proposition}{Propositions}
\crefname{lemma}{lemma}{lemmas}
\Crefname{lemma}{Lemma}{Lemmas}
\crefname{corollary}{corollary}{corollaries}
\Crefname{corollary}{Corollary}{Corollaries}
\crefname{definition}{definition}{definitions}
\Crefname{definition}{Definition}{Definitions}
\crefname{remark}{remark}{remarks}
\Crefname{remark}{Remark}{Remarks}

\newcommand{\R}{\mathbb R}
\newcommand{\T}{\mathbb T}
\newcommand{\Gr}{\operatorname{Gr}}
\newcommand{\PL}{\mathbb P\operatorname{L}}
\newcommand{\Qform}{\mathcal Q}
\newcommand{\CS}{\operatorname{CS}}
\newcommand{\qstar}{q_*}
\newcommand{\Her}{\mathcal H}

\title[Ptolemaic negative type and $q(6),q(7)$]
{Ptolemaic negative type and the values $q(6)$ and $q(7)$}

\author{Eren Ercan}
\address{Independent researcher, Sydney, Australia}
\email{eren321@gmail.com}
\date{}

\subjclass[2020]{Primary 05B35; Secondary 51F99, 54E35}
\keywords{Lorentzian polynomial, triangular hyperfield, uniform matroid,
Ptolemaic metric, negative type, Schoenberg matrix, metric inversion}

\begin{document}

\begin{abstract}
Baker, Huh, Kummer, and Lorscheid define the triangular-hyperfield
threshold $q(n)=q(U_{2,n})$ and conjecture exact values for all $n$.
Using their identity $q(n)=P(n-1)$, where $P(m)$ is the universal
negative-type exponent of $m$-point Ptolemaic metrics, we prove
\[
 q(6)=\log_2\frac94,
 \qquad
 q(7)=1.
\]
For five points, we separate zero-sum coefficient vectors by sign
pattern. A sharp two-summand inequality proves the inequality for the
$1+4$ pattern. For the $2+3$ pattern, a sharp four-point partial-correlation
bound and an exact copositivity identity prove the required inequality.
For six points, we construct an involutive coefficient transport under
metric inversion that preserves the negative-type quadratic form. Applying
the transport at an index with positive local contribution converts a
hypothetical $3+3$ counterexample into a $2+4$ counterexample. Complete split graph metrics attain both bounds.
\end{abstract}

\maketitle

\section{Introduction}

For a matroid $M$, Baker, Huh, Kummer, and Lorscheid define
\[
 q(M)=\sup\{q>0: \Gr_M^{\mathrm w}(\T_q)\subseteq\PL_M\},
\]
and write $q(n)=q(U_{2,n})$ for the rank-two uniform matroid.  They prove
$q(4)=2$ and $q(5)=\log_2 3$, and Conjecture~6.4 of \cite{BHKL}
predicts
\begin{equation}\label{eq:BHKL-conjecture}
 q(n)=
 \begin{cases}
  2\log_2\!\left(\dfrac{n}{n-2}\right),&n\text{ even},\\[2mm]
  \log_2\!\left(\dfrac{n+1}{n-3}\right),&n\text{ odd}.
 \end{cases}
\end{equation}
The first values left open by their paper are therefore
\[
 q(6)=\log_2\frac94,
 \qquad
 q(7)=1.
\]

A metric space $(X,d)$ is \emph{Ptolemaic} if
\begin{equation}\label{eq:ptolemy-definition}
 d(x,y)d(z,w)
 \le d(x,z)d(y,w)+d(x,w)d(y,z)
\end{equation}
for all $x,y,z,w\in X$. For $m\ge3$, let
\begin{equation}\label{eq:P-definition}
 \begin{split}
 P(m)=\sup\{q>0:{}&\text{ every Ptolemaic metric on exactly $m$ points}\\
                  &\text{ has $q$-negative type}\}.
 \end{split}
\end{equation}
Within an $m$-point metric, zero coefficients restrict the quadratic form
to any chosen subset. Proposition~7.5 of \cite{BHKL}
gives the exact correspondence
\begin{equation}\label{eq:correspondence}
 q(n)=P(n-1).
\end{equation}
Our main result is the following.

\begin{theorem}\label{thm:main}
One has
\[
 P(5)=\log_2\frac94,
 \qquad
 P(6)=1.
\]
Equivalently,
\[
 q(6)=\log_2\frac94,
 \qquad
 q(7)=1.
\]
Thus Conjecture~6.4 of \cite{BHKL} holds for $n=6$ and $n=7$.
\end{theorem}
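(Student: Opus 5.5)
By the correspondence \eqref{eq:correspondence}, it suffices to prove $P(5)=\log_2\frac94$ and $P(6)=1$. Each equality splits into an upper bound, which needs an explicit Ptolemaic metric that fails $q$-negative type for every larger $q$, and a lower bound, which says that every Ptolemaic metric on $m$ points has negative type at the claimed exponent. Recall that $(X,d)$ has $q$-negative type when
\[
 \Qform_q(c)=\sum_{i,j}c_ic_j\,d(x_i,x_j)^q\le 0
 \qquad\text{for all } c \text{ with } \textstyle\sum_i c_i=0 .
\]

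\textbf{Upper bounds.} For these I would use complete split graph metrics. The vertices form a clique $K_a$ together with an independent set $\overline{K_b}$, all cross edges are present, and $d$ is the graph distance, taking values in $\{1,2\}$. Such metrics are Ptolemaic, because their four-point configurations are distance-hereditary graph patterns. For $5=1+4$ or $2+3$ and $6=2+4$ or $3+3$, I would take $c$ constant on each of the two classes, with the ratio of the two constants fixed by the zero-sum condition. Then $\Qform_q(c)$ becomes an explicit function of $2^q$. I expect the right choice to be the independent set of size $4$ with one extra vertex (respectively two extra vertices), since pairs inside the independent set sit at distance $2$. Solving $\Qform_q(c)=0$ should give $2^q=9/4$, respectively $2^q=2$. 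By continuity, for $q$ beyond these values the form becomes positive for this $c$, so $P(5)\le\log_2\frac94$ and $P(6)\le 1$. As a consistency check, $n/(n-2)$ squared gives $9/4$ at $n=6$.

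\textbf{Lower bound for five points.} Since $d^q$ is again Ptolemaic for $q\le 1$, and negative type is monotone in $q$, it is enough to work at the threshold exponent $q_0=\log_2\frac94$. I would normalize $\sum c_i=0$ and split according to the sign pattern of $c$; up to the global sign, a pattern with zero entries reduces to fewer points, where $P(4)$ and $P(3)$ already exceed $q_0$.
\begin{itemize}
\item \emph{Pattern $1+4$.} Let $c_0=1$ and let the weights $-w_i$ sum to $1$. The inequality reads
\[
 \sum_{i<j} w_iw_j\, d_{ij}^{\,q} \;\le\; \sum_i w_i\, d_{0i}^{\,q}.
\]
Here I would bound each $d_{ij}^q$ by a sharp two-summand inequality of the form
\[
 d_{ij}^{\,q}\le \lambda\bigl(d_{0i}^{\,q}+d_{0j}^{\,q}\bigr),
\]
valid for Ptolemaic quadruples, with $\lambda$ tuned so that summing closes exactly at $q_0$.
\item \emph{Pattern $2+3$.} Here I would Schoenberg-transform at one point, giving a Gram-type form, and reduce to a bound on a partial-correlation coefficient among four points. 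This is then closed by checking copositivity of a small explicit matrix.
\end{itemize}

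\textbf{Lower bound for six points at $q=1$.} The patterns $1+5$, $2+4$ and $3+3$ remain.
\begin{itemize}
\item \emph{Patterns $1+5$ and $2+4$.} These should follow from arguments like the five-point ones. The constants are now easier, since $q=1$ is far below the sharp five-point exponent.
\item \emph{Pattern $3+3$.} Here I would use metric inversion at a base point $p$,
\[
 d'(x,y)=\frac{d(x,y)}{d(p,x)\,d(p,y)},
\]
which preserves the Ptolemaic property. The aim is a coefficient transport $c\mapsto c'$, roughly $c'_x=c_x\,d(p,x)$ with $c'_p$ fixed by the zero sum, under which $\Qform_1$ is preserved up to a positive factor. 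I would then choose $p$ where the local contribution $c_p\sum_x c_x d(p,x)$ is positive. Such a $p$ exists by averaging, since the total is $\Qform_1(c)>0$ for a counterexample. With this choice, the sign of $c'_p$ flips, turning a $3+3$ counterexample into a $2+4$ one, which contradicts the previous case.
\end{itemize}

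\textbf{Main obstacle.} I expect the hardest step to be the $2+3$ five-point case. Finding the correct sharp four-point inequality, one that is tight at the split-graph extremal configuration, is delicate. The verification of the inversion identity for the $3+3$ case is the second difficulty, though it should be algebraic once the right transport is guessed.
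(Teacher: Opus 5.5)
Your overall architecture is sound: splitting by sign pattern, using a Schur complement plus copositivity when there are two positive coefficients, and transporting coefficients under inversion for $3+3$. The $3+3$ step is correct as you describe it; with $c'_p=-\sum_x c_x d(p,x)$ the form is preserved exactly, not just up to a factor. But three steps fail as stated.

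First, the five-point witness is wrong. On the star $\CS(1,4)$ with coefficients $1,-\tfrac14,\dots,-\tfrac14$, the form equals $2\bigl(\tfrac38 2^q-1\bigr)$, which vanishes at $2^q=\tfrac83$. In fact this star has $q$-negative type for every $q\le\log_2\tfrac83$, so it only gives $P(5)\le\log_2\tfrac83$. The extremal metric is $\CS(2,3)$ with coefficients $\tfrac12,\tfrac12,-\tfrac13,-\tfrac13,-\tfrac13$, which gives $\tfrac14+\tfrac13 2^q\le1$. Your six-point witness $\CS(2,4)$ is fine.

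Second, your $1+4$ estimate cannot close for any $q>1$. Any bound $d_{ij}^q\le\lambda(d_{0i}^q+d_{0j}^q)$ needs $\lambda\ge2^{q-1}>1$: take the anchor to be the midpoint of a geodesic from $y_i$ to $y_j$. After summing you would need $\sum_i w_i\rho_i^q\bigl(1-\lambda(1-w_i)\bigr)\ge0$. This fails once some $w_i<1-1/\lambda$ and the corresponding $\rho_i$ is large. You must keep coefficient one on $s^q+t^q$ and put the excess into a geometric-mean term, $(s+t)^q\le s^q+t^q+h_q(st)^{q/2}$ with $h_q=2^q-2$. The gap is then bounded below by $(1+\tfrac{h_q}2)\sum_iu_i^2-\tfrac{h_q}2(\sum_iu_i)^2$ in $u_i=w_i\rho_i^{q/2}$, which is nonnegative when $h_q\le2/(r-1)$.

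The decisive gap is the two-positive case, which you describe only in outline. Base the Schoenberg matrix at one positive point $A$ and take the Schur complement $S$ after conditioning on the other positive point $B$. This gives the exact identity $\Delta=c^{\mathsf T}Sc+D^{-1}(bD-g^{\mathsf T}c)^2$, with $c\ge0$ the negative weights. So what you need is copositivity of $S$, which is not a small explicit matrix: its entries vary with the metric. Copositivity for $r$ negative points follows from a uniform bound $S_{UV}\ge-\theta\sqrt{S_{UU}S_{VV}}$ with $\theta\le1/(r-1)$. The required input is the sharp four-point theorem that this partial correlation is at least $-\kappa_q=-(2^{q+1}-3)/3$ in every Ptolemaic quadruple; this equals $-\tfrac12$ at $q=\qstar$.

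Proving that bound is the bulk of the paper. It pushes $d(U,V)$ to its triangle or Ptolemy endpoint, inverts to obtain a geodesic insertion, and applies an endpoint principle for the conditioned correlation. It then bounds the star and line endpoints and certifies the crossing Ptolemy face, which inverts to a three-leaf star, with an exact Bernstein expansion. Your proposal gives no route to this estimate.

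Your claim that the six-point $2+4$ case has easier constants is also false. With $r=4$ you need $\theta\le\tfrac13$, and $\kappa_1=\tfrac13$ exactly, so there is no slack. Indeed your own witness $\CS(2,4)$ is a $2+4$ equality case at $q=1$. Since your $3+3$ reduction lands in $2+4$, the six-point theorem also rests entirely on the missing four-point bound.
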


For the five-point statement, a sharp four-point theorem bounds the
normalized off-diagonal entries of a Schur complement. At
$q=\log_2(9/4)$, its lower partial-correlation bound is $-1/2$, the
copositivity threshold for three nonnegative coordinates. For six points,
metric inversion transports a zero-sum coefficient vector without changing
its quadratic form. Choosing an inversion point whose local contribution is
positive changes a balanced sign pattern into an adjacent one.

The only computer-assisted component is an exact rational Bernstein
certificate in the proof of the four-point correlation theorem. The
ancillary files contain the complete coefficient list, the generator, and
an independent reconstruction verifier. All remaining arguments are
analytic.

\subsection*{Statement on AI usage}
AI tools were used during proof exploration, symbolic verification, and
preparation of the manuscript and exact certificate. The author reviewed
and checked the arguments and takes full responsibility for all content.

\section{Negative type and a sharp two-summand inequality}

Following the standard Schoenberg formulation \cite{Schoenberg}, a finite metric space $(X,d)$ has \emph{$q$-negative type} if
\[
 \sum_{x,y\in X}z_xz_y d(x,y)^q\le0
 \qquad\text{whenever}\qquad
 \sum_{x\in X}z_x=0.
\]
It has \emph{strict $q$-negative type} if the inequality is strict for
every nonzero zero-sum vector.
Equivalently, if $a_i,b_j>0$ have equal total mass, then
\begin{equation}\label{eq:generalized-roundness}
 \sum_{i<i'}a_ia_{i'}d(x_i,x_{i'})^q
 +\sum_{j<j'}b_jb_{j'}d(y_j,y_{j'})^q
 \le
 \sum_{i,j}a_ib_jd(x_i,y_j)^q.
\end{equation}
We normalize both total masses to one. After zero coefficients are
removed, we say that the coefficient vector has \emph{sign pattern}
$a+b$ if it has $a$ positive and $b$ negative entries. A sign pattern is
\emph{balanced} when $a=b$.

For $1\le q\le2$, put
\[
 h_q=2^q-2.
\]

\begin{lemma}[Sharp two-summand estimate]\label{lem:two-summand}
For $s,t\ge0$ and $1\le q\le2$,
\begin{equation}\label{eq:two-summand}
 (s+t)^q\le s^q+t^q+h_q(st)^{q/2}.
\end{equation}
The coefficient $h_q$ is sharp. If $1<q<2$ and $s,t>0$, equality holds precisely when $s=t$.
\end{lemma}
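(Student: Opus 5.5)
By homogeneity (both sides are homogeneous of degree $q$ in $(s,t)$), the inequality \eqref{eq:two-summand} is trivial when $st=0$. So assume $s,t>0$ and normalize. Put $s=e^{u}$, $t=e^{-u}$ after scaling so that $st=1$. The inequality then becomes
\[
 f(u):=(2\cosh u)^q-2\cosh(qu)\le h_q=2^q-2,
\]
and equality at $u=0$ is immediate. It therefore suffices to show that $f$ is maximized at $u=0$ on $[0,\infty)$; the function is even, so $u\ge0$ covers everything. Once this holds, sharpness is automatic: equality at $s=t$ means no smaller coefficient can replace $h_q$.

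To show $f$ is nonincreasing, I compute
\[
 f'(u)=2q\bigl[(2\cosh u)^{q-1}\sinh u-\sinh(qu)\bigr].
\]
So I need $(2\cosh u)^{q-1}\sinh u\le\sinh(qu)$ for $u\ge0$. Writing $\sinh(qu)$ and $\sinh u$ in terms of $e^{\pm u}$, this is equivalent to
\[
 (e^u+e^{-u})^{q-1}(e^u-e^{-u})\le e^{qu}-e^{-qu}.
\]
Set $x=e^{u}\ge1$ and $p=q-1\in[0,1]$, and multiply by $x^{q}$. The claim becomes
\[
 (x^2+1)^{p}(x^2-1)\le x^{2q}-1,
\]
that is, with $y=x^2\ge1$,
\[
 g(y):=(y+1)^p(y-1)-y^{p+1}+1\le 0 .
\]

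I would prove $g\le0$ by monotonicity in $y$, using $g(1)=0$. Differentiating gives
\[
 g'(y)=(y+1)^{p-1}\bigl[(y+1)+p(y-1)\bigr]-(p+1)y^p.
\]
The natural comparison is via concavity of $t\mapsto t^p$ for $p\in[0,1]$. Rewriting $(y+1)+p(y-1)=(1+p)y+(1-p)$, the bracket in $g'$ is $(y+1)^{p-1}\bigl[(1+p)y+(1-p)\bigr]$, and we need this to be at most $(1+p)y^p$. Set $r=y/(y+1)\in[1/2,1)$. Dividing by $(y+1)^p$, the requirement becomes
\[
 (1+p)r+(1-p)(1-r)\le(1+p)r^p .
\]
At $r=1$ both sides equal $1+p$. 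The left side is linear in $r$, while the right side is concave in $r$. So it suffices to check the inequality at the other endpoint $r=1/2$, where it reads $1\le(1+p)2^{-p}$, i.e.\ $2^p\le1+p$. This holds for $p\in[0,1]$ by convexity of $2^p$, with equality exactly at $p\in\{0,1\}$. Hence $g'\le0$, so $g\le0$, so $f'\le0$ on $u\ge0$.

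For strictness when $1<q<2$, we have $0<p<1$. Then $2^p<1+p$ strictly, so the linear-versus-concave comparison is strict on $[1/2,1)$, which gives $g'<0$ for $y>1$. Hence $f'(u)<0$ for $u>0$, and $f(u)<f(0)$ unless $u=0$, i.e.\ unless $s=t$.

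The main obstacle I expect is the final reduction to a one-variable comparison: choosing the substitution $r=y/(y+1)$ so that the "linear versus concave" argument closes. If that step fails, I would instead compare $g''$ directly, or appeal to the classical inequality $(1+x)^q\le1+x^q+(2^q-2)x^{q/2}$ via a convexity argument in $\log x$.
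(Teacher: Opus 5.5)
Your proof is correct. Its first half coincides with the paper's: the paper also normalizes (via $z=s/t=e^{2x}$), rewrites the defect as $2^q\cosh^q x-2\cosh(qx)$, and reduces monotonicity to the comparison $\sinh(qx)/\sinh x\ge(2\cosh x)^{q-1}$.

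The two routes diverge in how this comparison is proved.

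The paper fixes $x>0$ and interpolates in the exponent. The function $F_x(q)=\log\frac{\sinh(qx)}{\sinh x}-(q-1)\log(2\cosh x)$ vanishes at $q=1$ and $q=2$, and $F_x''(q)=-x^2\operatorname{csch}^2(qx)<0$. So $F_x\ge0$ on $[1,2]$, and strictness for $1<q<2$ is immediate from strict concavity.

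You instead fix $q$ and vary the point. You pass to $g(y)$, differentiate once more, and use $r=y/(y+1)$ to reduce $g'\le0$ to an affine-versus-concave comparison on $[1/2,1]$. Its only nontrivial endpoint is $2^p\le1+p$. I checked the expression for $g'$, the identity $\frac{(1+p)y+(1-p)}{y+1}=(1+p)r+(1-p)(1-r)$, the endpoint values, and the strict version for $0<p<1$. The step you flagged as the likely obstacle does close, so no fallback is needed.

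What each approach buys: the paper's argument is shorter, and it shows why $[1,2]$ is the natural range, since the comparison is an identity in $x$ at both $q=1$ and $q=2$. Yours needs a second differentiation but uses only concavity of $r^p$ and convexity of $2^p$. It isolates the same range through $p=q-1\in[0,1]$.

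One small wording point: the case $st=0$ is trivial because both sides then coincide, not because of homogeneity. Homogeneity is what justifies your normalization $st=1$.
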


\begin{proof}
For $t>0$, set $z=s/t$ and define
\[
 \eta_q(z)=\frac{(1+z)^q-1-z^q}{z^{q/2}}.
\]
The symmetry $\eta_q(z)=\eta_q(z^{-1})$ reduces the problem to $z\ge1$. Write $z=e^{2x}$ with $x\ge0$. Then
\[
 \eta_q(e^{2x})=2^q\cosh^q x-2\cosh(qx).
\]
For $x>0$, the inequality $\frac{d}{dx}\eta_q(e^{2x})\le0$ is equivalent to
\begin{equation}\label{eq:hyperbolic-comparison}
 \frac{\sinh(qx)}{\sinh x}\ge(2\cosh x)^{q-1}.
\end{equation}
For fixed $x>0$, let
\[
 F_x(q)=\log\frac{\sinh(qx)}{\sinh x}-(q-1)\log(2\cosh x).
\]
One has $F_x(1)=F_x(2)=0$ and
\[
 F_x''(q)=-x^2\operatorname{csch}^2(qx)<0.
\]
Hence $F_x(q)\ge0$ for $1\le q\le2$, which proves \eqref{eq:hyperbolic-comparison}. Therefore $\eta_q$ is maximized at $z=1$, where $\eta_q(1)=2^q-2$. This proves \eqref{eq:two-summand}. The strict equality statement follows from strict concavity of $F_x$ for $x>0$. The cases $s=0$, $t=0$, $q=1$, and $q=2$ follow directly.
\end{proof}

\section{Sign patterns with one positive coefficient}

\begin{theorem}[One-anchor inequality]\label{thm:one-anchor}
Let $1\le q\le2$, and let $(X,d)$ be a metric space. Suppose a zero-sum coefficient vector has one positive coefficient and $r\ge2$ negative coefficients. If
\begin{equation}\label{eq:one-condition}
 h_q\le\frac{2}{r-1},
\end{equation}
then its $q$-negative-type inequality holds. Equivalently, every such sign pattern is valid for
\[
 q\le\log_2\frac{2r}{r-1}.
\]
The exponent is sharp on the equal-edge $r$-leaf star.
\end{theorem}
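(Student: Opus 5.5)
The plan is to reduce the inequality to a quadratic-form estimate via the triangle inequality and \cref{lem:two-summand}, and then close with Cauchy--Schwarz. Normalize as in \eqref{eq:generalized-roundness}: the positive coefficient is $1$ at a point $x$, and the negative coefficients are $b_1,\dots,b_r>0$ at points $y_1,\dots,y_r$, with $\sum_j b_j=1$. Put $s_j=d(x,y_j)$. The inequality to prove is
\[
 \sum_{j<j'}b_jb_{j'}\,d(y_j,y_{j'})^q\le\sum_j b_j s_j^q .
\]

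\textbf{Step 1 (triangle plus two-summand).} Since $d(y_j,y_{j'})\le s_j+s_{j'}$ and $q>0$, \cref{lem:two-summand} gives
\[
 d(y_j,y_{j'})^q\le s_j^q+s_{j'}^q+h_q(s_js_{j'})^{q/2}.
\]
Summing with weights $b_jb_{j'}$, the first two terms contribute $\sum_j b_js_j^q\sum_{j'\ne j}b_{j'}=\sum_j b_j(1-b_j)s_j^q$. It therefore suffices to show
\[
 h_q\sum_{j<j'}b_jb_{j'}(s_js_{j'})^{q/2}\le\sum_j b_j^2 s_j^q .
\]

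\textbf{Step 2 (Cauchy--Schwarz).} Set $v_j=b_js_j^{q/2}\ge0$. The required inequality becomes $h_q\sum_{j<j'}v_jv_{j'}\le\sum_jv_j^2$. Cauchy--Schwarz gives $(\sum v_j)^2\le r\sum v_j^2$, hence
\[
 \sum_{j<j'}v_jv_{j'}\le\frac{r-1}{2}\sum_jv_j^2 .
\]
Together with \eqref{eq:one-condition} this finishes the proof. The equivalence of \eqref{eq:one-condition} with $2^q\le 2+\frac{2}{r-1}=\frac{2r}{r-1}$ is immediate. Note that $\frac{2r}{r-1}\le4$ for $r\ge2$, so this range stays inside $q\le2$.

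\textbf{Step 3 (sharpness).} Take the $r$-leaf star with center $x$, all edges of length $1$, and all leaf-to-leaf distances equal to $2$; this is a graph metric. Use the weights $b_j=1/r$. Then the right side equals $1$ and the left side equals $\binom r2 r^{-2}2^q=\frac{r-1}{2r}2^q$, which exceeds $1$ exactly when $q>\log_2\frac{2r}{r-1}$. This shows the exponent is sharp.

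There is no serious obstacle in this argument. The only point needing care is the bookkeeping in Step 1: one must check that the diagonal masses $b_j(1-b_j)$ leave exactly the residual $\sum_j b_j^2s_j^q$, which is what makes the Cauchy--Schwarz step close with the constant $\frac{2}{r-1}$.
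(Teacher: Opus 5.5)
Your proof is correct and follows the paper's argument. You make the same triangle-inequality plus \cref{lem:two-summand} reduction to $h_q\sum_{j<j'}v_jv_{j'}\le\sum_j v_j^2$, and your Cauchy--Schwarz step is exactly the nonnegativity of the term $\frac{h_q}{2}\sum_{i<j}(u_i-u_j)^2$ in the paper's identity \eqref{eq:one-stability}; the paper keeps this as an exact sum-of-squares decomposition, which it reuses later to get strictness below the endpoint.
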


\begin{proof}
Normalize the positive coefficient to one and write the negative coefficients as $c_1,\ldots,c_r\ge0$, where $\sum_i c_i=1$. Put
\[
 \rho_i=d(o,y_i),
 \qquad
 u_i=c_i\rho_i^{q/2}.
\]
The triangle inequality and \cref{lem:two-summand} give
\[
 d(y_i,y_j)^q
 \le \rho_i^q+\rho_j^q+h_q(\rho_i\rho_j)^{q/2}.
\]
Consequently, the gap in \eqref{eq:generalized-roundness} satisfies
\begin{align}
 \Delta
 &=\sum_i c_i\rho_i^q-\sum_{i<j}c_ic_jd(y_i,y_j)^q\notag\\
 &\ge
 \left(1+\frac{h_q}{2}\right)\sum_i u_i^2
 -\frac{h_q}{2}\left(\sum_i u_i\right)^2\notag\\
 &=
 \left(1-\frac{(r-1)h_q}{2}\right)\sum_i u_i^2
 +\frac{h_q}{2}\sum_{i<j}(u_i-u_j)^2.
 \label{eq:one-stability}
\end{align}
The last identity uses
\[
 \sum_{i<j}(u_i-u_j)^2
 =r\sum_i u_i^2-\left(\sum_i u_i\right)^2.
\]
This proves the result under \eqref{eq:one-condition} and records the quantitative distance from equality.

For an equal-edge star with equal negative weights, the triangle estimate,
the sharp power estimate, and both terms in \eqref{eq:one-stability} are
sharp at the stated exponent.
\end{proof}

\begin{proposition}[One-positive sign patterns below exponent one]
\label{prop:one-anchor-subunit}
Let $0<q\le1$. Every zero-sum coefficient vector with one positive
coefficient satisfies the $q$-negative-type inequality in every metric
space.
\end{proposition}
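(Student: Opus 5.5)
The natural plan is to rerun the argument of \cref{thm:one-anchor}, with the sharp two-summand estimate replaced by a cruder subadditivity bound. For $0<q\le1$ this cruder bound is enough, because the resulting quadratic form is then positive semidefinite for every $r$.

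Normalize the positive coefficient to one, at a point $o$, and write the negative coefficients as $c_1,\ldots,c_r\ge0$ with $\sum_i c_i=1$. Put $\rho_i=d(o,y_i)$. The case $r=1$ is trivial: both sides of \eqref{eq:generalized-roundness} reduce to $0\le\rho_1^q$. So assume $r\ge2$.

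For $0<q\le1$ the map $t\mapsto t^q$ is subadditive on $[0,\infty)$. Combined with the triangle inequality this gives
\[
 d(y_i,y_j)^q\le(\rho_i+\rho_j)^q\le\rho_i^q+\rho_j^q .
\]
Setting $v_i=\rho_i^q\ge0$, the gap then satisfies
\[
 \Delta=\sum_i c_iv_i-\sum_{i<j}c_ic_jd(y_i,y_j)^q
 \ge\sum_i c_iv_i-\sum_{i<j}c_ic_j(v_i+v_j).
\]
The subtracted sum equals $\sum_i c_iv_i(1-c_i)$, since each $i$ pairs with all $j\ne i$ and $\sum_{j\ne i}c_j=1-c_i$. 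Hence
\[
 \Delta\ge\sum_i c_i^2v_i\ge0,
\]
which is the required $q$-negative-type inequality.

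Equivalently, one could take $h_0=0$ in the framework of \eqref{eq:one-stability}. With $h_0=0$ the condition $h\le 2/(r-1)$ holds for every $r$. However, \cref{lem:two-summand} is stated only for $1\le q\le 2$, so I would use the direct computation above rather than appeal to it.

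There is no real obstacle here. The only step to check is the pair-sum identity $\sum_{i<j}c_ic_j(v_i+v_j)=\sum_i c_iv_i(1-c_i)$, which uses only $\sum_i c_i=1$. The subadditivity $(s+t)^q\le s^q+t^q$ for $0<q\le1$ is standard: it follows from concavity of $t^q$ together with $0^q=0$.
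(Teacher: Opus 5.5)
Your proof is correct and follows the paper's argument: the triangle inequality together with subadditivity of $t\mapsto t^q$ gives $d(y_i,y_j)^q\le\rho_i^q+\rho_j^q$, and then $\sum_{i<j}c_ic_j(\rho_i^q+\rho_j^q)=\sum_i c_i(1-c_i)\rho_i^q\le\sum_i c_i\rho_i^q$. Your separate treatment of $r=1$ is harmless but not needed, since the pair sum is then empty.
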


\begin{proof}
Normalize the positive coefficient to one, write the negative
coefficients as $c_1,\ldots,c_r\ge0$ with $\sum_i c_i=1$, and put
$\rho_i=d(o,y_i)$. Concavity and the triangle inequality give
\[
 d(y_i,y_j)^q\le(\rho_i+\rho_j)^q\le\rho_i^q+\rho_j^q.
\]
Therefore
\[
 \sum_{i<j}c_ic_jd(y_i,y_j)^q
 \le\sum_i c_i(1-c_i)\rho_i^q
 \le\sum_i c_i\rho_i^q,
\]
which is the required inequality.
\end{proof}

\section{Partial correlations in four-point Ptolemaic metrics}

\begin{definition}[Partial correlation]\label{def:section-correlation}
Let $P,A,B,O$ be distinct points, and let $G$ be the Schoenberg matrix of
$d^{q/2}$ based at $P$:
\[
 G_{XY}=\frac{d(P,X)^q+d(P,Y)^q-d(X,Y)^q}{2}.
\]
For $X,Y\in\{A,B\}$, set
\[
 S_{XY}=G_{XY}-\frac{G_{XO}G_{OY}}{G_{OO}}.
\]
When $S_{AA}S_{BB}>0$, define the \emph{partial correlation}
\begin{equation}\label{eq:section-correlation-definition}
 \chi_q(A,B\mid P;O)
 =\frac{S_{AB}}{\sqrt{S_{AA}S_{BB}}}.
\end{equation}
When $G$ is positive semidefinite, this is the normalized inner product
of the projections of the vectors for $A$ and $B$ onto the orthogonal
complement of the vector for $O$. Degenerate cases used below are
understood by continuity.
\end{definition}

Let $A,B,U,V$ be four points of a Ptolemaic metric. Put
\[
 D=d(A,B)^q,
 \qquad
 R_U=d(A,U)^q,
 \qquad
 T_U=d(B,U)^q,
\]
and define $R_V,T_V$ similarly. The Schoenberg Gram matrix based at $A$, with $B$ placed first, has the block form
\[
 G=
 \begin{pmatrix}
  D&g^{\mathsf T}\\
  g&H
 \end{pmatrix},
 \qquad
 g_U=\frac{D+R_U-T_U}{2},
\]
where
\[
 H_{UU}=R_U,
 \qquad
 H_{UV}=\frac{R_U+R_V-d(U,V)^q}{2}.
\]
Define the Schur complement
\begin{equation}\label{eq:transverse}
 S=H-D^{-1}gg^{\mathsf T}.
\end{equation}
This is the Schur complement for the points $U,V$ after conditioning on
$B$ in the Schoenberg matrix based at $A$. For $q\le2$, each $S_{UU}$ is
nonnegative. Indeed,
$d^{q/2}$ is a metric and every three-point metric is Euclidean.

For $1\le q\le\log_2 3$, set
\begin{equation}\label{eq:kappa-lambda}
 \kappa_q=\frac{2^{q+1}-3}{3},
 \qquad
 \lambda_q=\frac{(2^q-2)^2}{4-(2^q-2)^2}.
\end{equation}
In this range, $0\le\lambda_q\le\kappa_q$.

The following sharp four-point estimate is the main ingredient. Its proof
is given in Appendix~\ref{app:geodesic}. The computer-assisted component is
an exact rational Bernstein certificate supplied with the ancillary files.

\begin{theorem}[Partial correlation for a geodesic insertion]\label{thm:geodesic-section}
Let $A,P,B,O$ be four distinct points in a Ptolemaic metric and
suppose that
\[
 d(A,B)=d(A,P)+d(P,B).
\]
With the notation of \cref{def:section-correlation}, let
$\chi_q=\chi_q(A,B\mid P;O)$. Then
\begin{equation}\label{eq:geodesic-section-bound}
 -\kappa_q\le\chi_q\le\lambda_q,
 \qquad
 1\le q\le\log_2 3.
\end{equation}
The lower constant is sharp. At $q=\log_2(9/4)$, it equals $-1/2$.
\end{theorem}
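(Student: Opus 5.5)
We plan to reduce the statement to a polynomial inequality in a small number of normalized distance parameters and then certify it. By scaling, set $d(A,P)=s$, $d(P,B)=1-s$ with $s\in(0,1)$, so that $d(A,B)=1$. The free data are then the three distances $x=d(P,O)$, $a=d(A,O)$, $b=d(B,O)$. In terms of these, the entries of the Schoenberg matrix $G$ based at $P$ are explicit:
\[
 G_{AA}=s^q,\quad G_{BB}=(1-s)^q,\quad G_{OO}=x^q,\quad G_{AB}=\tfrac{s^q+(1-s)^q-1}{2},
\]
together with $G_{AO}=\tfrac{s^q+x^q-a^q}{2}$ and $G_{BO}=\tfrac{(1-s)^q+x^q-b^q}{2}$. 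The two-sided bound $-\kappa_q\le\chi_q\le\lambda_q$ is equivalent to the two inequalities
\[
 S_{AB}+\kappa_q\sqrt{S_{AA}S_{BB}}\ge0,
 \qquad
 \lambda_q\sqrt{S_{AA}S_{BB}}-S_{AB}\ge0 .
\]
After multiplying through by $G_{OO}$, these become statements about $\det$-type minors of $G$.

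The constraint region comes from the Ptolemy inequalities on $\{A,P,B,O\}$ and the triangle inequalities. Because $P$ lies on a geodesic from $A$ to $B$, the Ptolemy inequality with the pairing $AB\cdot PO\le AP\cdot BO+PB\cdot AO$ gives $x\le s\,b+(1-s)a$. The reverse Ptolemy inequalities give lower bounds on $a,b$ relative to $x$. So $x$ is trapped as a "Ptolemaic interpolation" between $a$ and $b$, which is the analogue of Stewart's theorem.

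The first step is to identify the extremal configurations.
\begin{itemize}
\item \emph{Lower bound.} We expect the lower bound to be attained in the degenerate limit where the Ptolemy inequality is an equality and $O$ is equidistant from $A$ and $B$ with $s=1/2$. This corresponds to the complete split graph or star-like configurations used later for sharpness. Computing $\chi_q$ there should give exactly $-(2^{q+1}-3)/3$, and at $q=\log_2(9/4)$ this is $-(9/2-3)/3=-1/2$, which establishes sharpness.
\item \emph{Upper bound.} We expect $\lambda_q$ to come from a configuration where $O$ approaches the geodesic. There Lemma~\ref{lem:two-summand} controls $G_{AB}=\tfrac{s^q+(1-s)^q-1}{2}\ge-\tfrac{h_q}{2}(s(1-s))^{q/2}$. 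This bounds the unconditioned correlation $G_{AB}/\sqrt{G_{AA}G_{BB}}$ below by $-h_q/2$, and the conditioning on $O$ produces the rational expression $\lambda_q=\tfrac{(h_q/2)^2}{1-(h_q/2)^2}$.
\end{itemize}

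Second, we monotonize. For fixed $s$ and $q$, we show that the expressions above are monotone in $x$ over the Ptolemaic admissible interval, so that $x$ may be pushed to the Ptolemy-equality boundary $x=sb+(1-s)a$ or to a triangle boundary. This leaves a function of $(s,a,b,q)$. A further homogeneity or symmetry argument (exchanging $A\leftrightarrow B$, $s\leftrightarrow1-s$) reduces this to a compact box.

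Third, we certify. The powers $t^q$ are non-polynomial, so we substitute $q$ at grid points only where necessary. The preferred route is to bound $t^q$ between rational functions using the concavity and convexity of $t\mapsto t^q$ for $1\le q\le\log_2 3$, together with Lemma~\ref{lem:two-summand}. This yields polynomial inequalities on a box, which we then certify by expanding in the Bernstein basis with subdivision, using exact rational arithmetic.

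The main obstacle is this last step. The inequality is tight along an entire sharpness locus, so the Bernstein coefficients vanish there and a naive subdivision never terminates. We would first try to factor out the known equality factor, for example $(s-1/2)^2$ and the Ptolemy-defect, in closed form. After that, we certify only the strictly positive cofactor, handling the endpoint $q=\log_2 3$ separately as a limit.
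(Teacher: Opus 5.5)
Your constants are right, and so, essentially, is your extremal configuration. It is the honest metric $\CS(2,2)$, scaled, with $A,B$ in the independent part, so it needs $d(A,O)=d(O,B)=\tfrac12$, not only equidistance, and it is not a degenerate limit. However, your monotonization step is false.

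Take $d(A,B)=1$, $d(A,P)=d(P,B)=\tfrac12$, and $d(O,A)=d(O,B)=a>\tfrac12$. The full Ptolemaic interval is then $a-\tfrac12\le x\le a$. With $R=x^q$ and $\delta=a^q-2^{-q}$, one computes
$S_{AA}=S_{BB}=\tfrac12(2^{-q}+a^q)-\tfrac14(R+\delta^2/R)$ and $S_{AB}=S_{AA}-\tfrac12$.
For $q>1$, strict superadditivity of $t\mapsto t^q$ gives $(a-\tfrac12)^q<\delta<a^q$. So $S_{AA}$, and with it $\chi_q=1-1/(2S_{AA})$, has a strict interior maximum in $x$. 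Your two expressions, $(1+\kappa_q)S_{AA}-\tfrac12$ and $\tfrac12-(1-\lambda_q)S_{AA}$, are therefore not monotone, and the second is minimized in the interior.

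What survives is the paper's endpoint principle. With $R=e^t$, the conditioned matrix is a pencil $K_0-uu^{\mathsf T}-vv^{\mathsf T}$ with $u\propto e^{t/2}$ and $v\propto e^{-t/2}$. A computation of $(\log(-\chi))''$ shows that every negative interior critical point is a strict local maximum. This moves the \emph{minimum} to an endpoint but says nothing about the maximum. For the upper bound, the paper argues only where $\chi_q>0$: it increases $d(O,B)$ to the shared-segment face, then moves $d(A,O)$ to a line endpoint. There the two-summand lemma and nonnegativity of the mixed second difference give $\lambda_q$.

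Even for the lower bound, the endpoints are not yet a box problem. The triangle endpoint needs a second reduction to star and line metrics. The Ptolemy endpoint needs inversion at $P$, which gives a double-geodesic configuration, then a second endpoint round, then inversion at an endpoint of the geodesic. That last step reaches a two-parameter three-leaf star with a symmetric closed formula.

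Your certification plan also leaves a gap unresolved. The lower bound is attained for \emph{every} $q\in[1,\log_2 3]$, so a grid in $q$ has zero margin at each node. Any one-sided rational replacement of $t^q$ would have to stay exact at the equality configuration for all $q$ at once, and you give no device that does this.

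There is also a structural obstruction. Bernstein basis functions are strictly positive in the relative interior of a box or face. So a nonnegative polynomial vanishing there has no nonnegative expansion unless it vanishes on that whole face. Subdivision can terminate only if the chart puts the equality point at a vertex, and in your chart it sits at $s=\tfrac12$. Factoring does not help either: for each $q$ the equality set is a single point, not a hypersurface, so there is no factor like $(s-\tfrac12)^2$ to remove.

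The missing idea is the paper's chart on the star. Use the permutation symmetry to order the branches as $0<p\le o\le 1$, and take coordinates $o^{q/2}$ and $(p/o)^{q/2}$. Encode all $q$-dependence through $h=2^q-2$ and three normalized two-summand defects in $[0,1]$, treated as independent variables. Then $\chi_q=-N/\sqrt{H_xH_y}$ with polynomials on $[0,1]^6$. The bound $-(1+2h)/3=-\kappa_q$ holds on the whole relaxed cube, with equality at the vertex where all five geometric coordinates equal $1$. A single Bernstein expansion with nonnegative rational coefficients then certifies it for all $q$ simultaneously.
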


Appendix~\ref{app:geodesic} gives the endpoint reduction. The exact
certificate is a tensor-product Bernstein expansion with $24{,}305$
nonnegative rational coefficients. The ancillary files contain the
complete coefficient list, generator, and independent verifier.

\begin{proposition}[Four-point Schur-complement bound]\label{prop:transverse}
Let $1\le q\le\log_2 3$. For every four-point Ptolemaic metric and
every distinguished pair $A,B$, the matrix in \eqref{eq:transverse}
satisfies
\begin{equation}\label{eq:pair-lower}
 S_{UV}\ge-\kappa_q\sqrt{S_{UU}S_{VV}}.
\end{equation}
The constant is sharp.
\end{proposition}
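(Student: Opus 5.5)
The plan is to deduce \eqref{eq:pair-lower} from \cref{thm:geodesic-section} through a monotonicity reduction in the single distance $d(U,V)$. I would organize it in four steps.

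\emph{Step 1: rewrite $S$ as a partial correlation.} By the Schur-complement description following \eqref{eq:transverse}, whenever $S_{UU}S_{VV}>0$ we have
\[
 \frac{S_{UV}}{\sqrt{S_{UU}S_{VV}}}=\chi_q(U,V\mid A;B).
\]
Here $A$ is the base point, $B$ is the conditioned point, and $U,V$ is the pair. If $S_{UU}S_{VV}=0$, then \eqref{eq:pair-lower} says only $S_{UV}\ge0$. I would treat this case by continuity, as \cref{def:section-correlation} permits. Alternatively one can note that a vanishing diagonal entry forces the corresponding projected vector to vanish, so $S_{UV}=0$.

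\emph{Step 2: the quantity is symmetric in $A$ and $B$.} Geometrically, $S$ is the Gram matrix of the components of $U-A$ and $V-A$ orthogonal to $B-A$. These components are unchanged if the base is moved from $A$ to $B$, since $(U-A)-(U-B)=B-A$ lies in the conditioned direction. So $S$, and hence the correlation, is symmetric in $A$ and $B$. I would verify this algebraically from the entry formulas rather than assume an embedding, so that it holds without positive semidefiniteness.

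\emph{Step 3: monotonicity in $d(U,V)$.} Only $H_{UV}$ involves $d(U,V)$. It enters as $-\tfrac12 d(U,V)^q$, and neither $S_{UU}$, $S_{VV}$ nor $g$ depends on it. Hence the correlation is strictly decreasing in $d(U,V)$ when the other five distances are held fixed. It therefore suffices to prove the bound when $d(U,V)$ takes the largest value compatible with the metric and Ptolemaic inequalities. That value is the minimum of three bounds:
\[
 d(U,A)+d(A,V),\qquad
 d(U,B)+d(B,V),\qquad
 \frac{d(U,A)d(V,B)+d(U,B)d(V,A)}{d(A,B)}.
\]
I would check that the remaining constraints, which are lower bounds on $d(U,V)$ or do not involve it, stay satisfied when $d(U,V)$ is increased to this value. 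Then the extremal configuration is still a Ptolemaic four-point metric.

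\emph{Step 4: the three binding cases.} If the first bound binds, then $A$ lies on a geodesic from $U$ to $V$. This is exactly the hypothesis of \cref{thm:geodesic-section}, with $(A,P,B,O)$ there replaced by $(U,A,V,B)$. That theorem gives correlation $\ge-\kappa_q$. If the second bound binds, the symmetry of Step 2 swaps $A$ and $B$ and reduces to the first case. Sharpness is inherited from the sharpness of the lower constant in \cref{thm:geodesic-section}, because any geodesic-insertion configuration is itself a four-point Ptolemaic metric with distinguished pair $A,B$.

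\emph{Main obstacle: the Ptolemy-equality case.} This is the case where the third bound is the binding one. My first attempt would be to show that it is dominated by a geodesic case. Along the one-parameter family of Ptolemaic-equality configurations, obtained by varying another distance, I would push the configuration until a triangle equality also holds, and check that the correlation does not increase along the way. If that fails, I would use metric inversion centred at $A$, namely $d'(x,y)=d(x,y)/(d(A,x)d(A,y))$. This turns the Ptolemy equality for $\{A,B,U,V\}$ into a triangle equality among $\{B,U,V\}$. I would then track how the Schoenberg Schur complement transforms, in the spirit of the coefficient transport used later for six points. This case is where I expect the real difficulty to lie.
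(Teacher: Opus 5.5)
Your Steps 1--3 and the first two binding cases follow the paper's argument: replace $d(U,V)$ by the upper endpoint $e_+$ (only $S_{UV}$ moves, and it decreases). When $e_+=r_U+r_V$, apply \cref{thm:geodesic-section} with $A$ as geodesic middle. For the second triangle bound, use $S^{A\mid B}=S^{B\mid A}$. Your sharpness argument is also fine.

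The gap is the Ptolemy-equality case, which you leave open, and it is a genuine case. For example, with $r_U=r_V=s_U=s_V=1$ and $d(A,B)=3/2$, the Ptolemy bound $4/3$ is strictly below both triangle bounds, so neither geodesic case applies. Your ``first attempt'' of sliding along the Ptolemy face toward a triangle face has no monotonicity argument behind it, and it is not needed.

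Your fallback, inversion at $A$, is the paper's route. The step you defer is the one that makes it work: inversion at the base point $A$ only rescales the Schur complement diagonally. Write $E=d(U,V)^q$. Inversion at $A$ sends $D\mapsto D^{-1}$, $R_U\mapsto R_U^{-1}$, $T_U\mapsto T_U/(DR_U)$ and $E\mapsto E/(R_UR_V)$. Hence $g_U\mapsto g_U/(DR_U)$ and $H_{UV}\mapsto H_{UV}/(R_UR_V)$, and substituting into \eqref{eq:transverse} gives
\[
 \widehat S_{UU}=\frac{S_{UU}}{R_U^2},\qquad
 \widehat S_{VV}=\frac{S_{VV}}{R_V^2},\qquad
 \widehat S_{UV}=\frac{S_{UV}}{R_UR_V}.
\]
So the normalized entry is unchanged. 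The case then closes in three moves:
\begin{enumerate}
 \item Dividing the Ptolemy equality by $d(A,B)d(A,U)d(A,V)$ gives $\widehat d(U,V)=\widehat d(U,B)+\widehat d(B,V)$.
 \item The inverted metric is Ptolemaic by \cref{lem:ptolemy-inversion}.
 \item Your symmetry $S^{A\mid B}=S^{B\mid A}$ rewrites the correlation as $\chi_q(U,V\mid B;A)$ with $B$ the geodesic middle, so \cref{thm:geodesic-section} applies.
\end{enumerate}
With this computation supplied, your proof is complete.

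A minor point: your alternative treatment of a vanishing diagonal entry presupposes that $S$ is positive semidefinite. It is simpler to note that for distinct points and $q<2$ the snowflake $d^{q/2}$ satisfies strict triangle inequalities. Then $S_{UU},S_{VV}>0$, and the degenerate case does not arise.
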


\begin{proof}
Write
\[
 \begin{aligned}
 r_U&=d(A,U),& s_U&=d(B,U),\\
 r_V&=d(A,V),& s_V&=d(B,V),& d&=d(A,B).
 \end{aligned}
\]
and let $e=d(U,V)$. The exact upper endpoint of the simultaneous triangle and Ptolemy interval for $e$ is
\begin{equation}\label{eq:upper-endpoint}
 e_+=\min\left\{
 r_U+r_V,
 s_U+s_V,
 \frac{r_Us_V+r_Vs_U}{d}
 \right\}.
\end{equation}
The given value satisfies $e\le e_+$. Replacing $e$ by $e_+$ preserves all triangle and Ptolemy inequalities, because every lower bound on $e$ was already satisfied by the original value. The diagonal entries of $S$ remain fixed, while
\[
 S_{UV}=\frac{R_U+R_V-e^q}{2}-\frac{g_Ug_V}{D}
\]
decreases. It therefore suffices to prove \eqref{eq:pair-lower} at $e=e_+$.

If $e_+=r_U+r_V$, then $A$ lies on a geodesic from $U$ to $V$, and
\cref{thm:geodesic-section} applies directly, with $B$ as the conditioning
point. If $e_+=s_U+s_V$, the same argument uses $B$ as the geodesic middle
point. The Schur complement is unchanged when the two conditioning points
are exchanged: direct substitution gives
\[
 S^{A\mid B}=S^{B\mid A}.
\]

In the remaining case,
\[
 d\,e_+=r_Us_V+r_Vs_U.
\]
Metric inversion preserves Ptolemaic metrics. Invert the metric at $A$. The Ptolemy equality becomes
\[
 \widehat d(U,V)=\widehat d(U,B)+\widehat d(B,V),
\]
so $B$ is a geodesic middle point. Under this inversion,
\[
 \widehat S_{UU}=\frac{S_{UU}}{R_U^2},
 \qquad
 \widehat S_{VV}=\frac{S_{VV}}{R_V^2},
 \qquad
 \widehat S_{UV}=\frac{S_{UV}}{R_UR_V}.
\]
Hence the normalized off-diagonal entry is invariant, and
\cref{thm:geodesic-section} applies again.

Cases in which a diagonal entry of $S$ vanishes follow by continuity. The
balanced crossing configuration described in Appendix~\ref{app:geodesic}
attains the lower bound.
\end{proof}

\section{Two-point sign support and copositivity}

\begin{lemma}[Copositive Schur closure]\label{lem:copositive-closure}
Let $S=(S_{ij})_{i,j=1}^r$ be symmetric with $S_{ii}\ge0$, and suppose that
\[
 S_{ij}\ge-\theta\sqrt{S_{ii}S_{jj}}
 \qquad(i\ne j)
\]
for some $\theta\ge0$. For $c\in\R_+^r$, put
\[
 x_i=c_i\sqrt{S_{ii}},
 \qquad
 \varepsilon_{ij}=S_{ij}+\theta\sqrt{S_{ii}S_{jj}}\ge0.
\]
Then
\begin{align}
 c^{\mathsf T}Sc
 ={}&
 \bigl(1-(r-1)\theta\bigr)\sum_{i=1}^r x_i^2
 +\theta\sum_{i<j}(x_i-x_j)^2\notag\\
 &+2\sum_{i<j}c_ic_j\varepsilon_{ij}.
 \label{eq:copositive-stability}
\end{align}
In particular, $S$ is copositive whenever $\theta\le1/(r-1)$.
\end{lemma}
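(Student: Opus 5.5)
The plan is to verify the identity \eqref{eq:copositive-stability} by expanding both sides in the variables $x_i$ and $c_i$, in the same way as in the proof of \cref{thm:one-anchor}. First I would write $c^{\mathsf T}Sc=\sum_i c_i^2S_{ii}+2\sum_{i<j}c_ic_jS_{ij}$. The diagonal part is exactly $\sum_i x_i^2$. For the off-diagonal entries I substitute $S_{ij}=\varepsilon_{ij}-\theta\sqrt{S_{ii}S_{jj}}$. Since $c_ic_j\sqrt{S_{ii}S_{jj}}=x_ix_j$, this gives
\[
 c^{\mathsf T}Sc=\sum_i x_i^2-2\theta\sum_{i<j}x_ix_j+2\sum_{i<j}c_ic_j\varepsilon_{ij}.
\]
No assumption on the sign of $S_{ii}$ is needed here beyond $S_{ii}\ge0$, which makes the square roots real.

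Next I would rewrite the cross term with the identity $\sum_{i<j}(x_i-x_j)^2=(r-1)\sum_i x_i^2-2\sum_{i<j}x_ix_j$. This yields $-2\theta\sum_{i<j}x_ix_j=\theta\sum_{i<j}(x_i-x_j)^2-(r-1)\theta\sum_i x_i^2$. Substituting this back gives \eqref{eq:copositive-stability}. This identity is a slightly different form from the one quoted in the proof of \cref{thm:one-anchor}, so I would check it carefully.

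For the copositivity conclusion, take $c\in\R_+^r$ and assume $\theta\le1/(r-1)$. Then $1-(r-1)\theta\ge0$ and $\theta\ge0$, so the first two terms are nonnegative. Each $\varepsilon_{ij}\ge0$ by hypothesis, and $c_ic_j\ge0$, so the third term is nonnegative as well. Hence $c^{\mathsf T}Sc\ge0$.

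There is no real obstacle in this argument. The only points that need care are the bookkeeping of the factor $2$ on the off-diagonal terms, and the check that the hypothesis defining $\varepsilon_{ij}\ge0$ is exactly the stated lower bound on $S_{ij}$.
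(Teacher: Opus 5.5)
Your proposal is correct and follows the paper's proof essentially verbatim: expand $c^{\mathsf T}Sc$, substitute $S_{ij}=\varepsilon_{ij}-\theta\sqrt{S_{ii}S_{jj}}$, and apply $\sum_{i<j}(x_i-x_j)^2=(r-1)\sum_i x_i^2-2\sum_{i<j}x_ix_j$, after which every term is nonnegative for $c\in\R_+^r$ and $\theta\le 1/(r-1)$. The identity you flag as differing from the one in the proof of \cref{thm:one-anchor} is in fact the same identity, since $\bigl(\sum_i u_i\bigr)^2=\sum_i u_i^2+2\sum_{i<j}u_iu_j$.
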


\begin{proof}
Expand $c^{\mathsf T}Sc$, substitute the definition of $\varepsilon_{ij}$, and use
\[
 \sum_{i<j}(x_i-x_j)^2
 =(r-1)\sum_i x_i^2-2\sum_{i<j}x_ix_j.
\]
Every term on the right of \eqref{eq:copositive-stability} is nonnegative under the stated bound on $\theta$.
\end{proof}

\begin{theorem}[Two-anchor inequality]\label{thm:two-anchor}
Let $(X,d)$ be a Ptolemaic metric. Suppose a zero-sum coefficient vector has two positive coefficients and $r\ge2$ negative coefficients. Let $1\le q\le\log_2 3$. If
\begin{equation}\label{eq:kappa-condition}
 \kappa_q\le\frac1{r-1},
\end{equation}
then its $q$-negative-type inequality holds.
\end{theorem}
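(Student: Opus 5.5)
Let the positive coefficients sit at points $A,B$ with weights $a,b>0$, $a+b=1$, and the negative coefficients at $y_1,\dots,y_r$ with weights $c_1,\dots,c_r\ge0$, $\sum_i c_i=1$. I rewrite the zero-sum quadratic form in Schoenberg form based at $A$. Set $z_A=a$, $z_B=b$, $z_{y_i}=-c_i$, and let $w$ be the vector of coefficients on $B,y_1,\dots,y_r$, namely $w=(b,-c_1,\dots,-c_r)$. The zero-sum condition then gives
\[
 \sum_{x,y}z_xz_yd(x,y)^q=-2\,w^{\mathsf T}Gw,
\]
where $G$ is the Gram matrix of $d^{q/2}$ based at $A$, with $B$ placed first. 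So it suffices to show $w^{\mathsf T}Gw\ge0$.

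In the block notation of \eqref{eq:transverse}, with $D=d(A,B)^q$ and $g$ indexed by the $y_i$, I would write
\[
 w^{\mathsf T}Gw=Db^2-2b\,g^{\mathsf T}c+c^{\mathsf T}Hc .
\]
Completing the square in $b$ gives
\[
 w^{\mathsf T}Gw=D\Bigl(b-\frac{g^{\mathsf T}c}{D}\Bigr)^2+c^{\mathsf T}Sc,
\]
where $S=H-D^{-1}gg^{\mathsf T}$ is the $r\times r$ Schur complement. The first term is nonnegative because $D>0$ when $A\neq B$. Hence it is enough that $c^{\mathsf T}Sc\ge0$ for every $c\in\R_+^r$, that is, that $S$ is copositive.

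To get copositivity, note that every pair $y_i,y_j$ together with $A,B$ is a four-point Ptolemaic submetric, since the Ptolemy inequality is inherited by subsets. The $2\times2$ principal submatrix of $S$ on $\{y_i,y_j\}$ is exactly the four-point Schur complement of \cref{prop:transverse}, because conditioning on $B$ only involves the entries for $A$, $B$, $y_i$, $y_j$. That proposition, valid for $1\le q\le\log_2 3$, gives
\[
 S_{ij}\ge-\kappa_q\sqrt{S_{ii}S_{jj}},
\]
and we already know $S_{ii}\ge0$. Applying \cref{lem:copositive-closure} with $\theta=\kappa_q\le1/(r-1)$, which is hypothesis \eqref{eq:kappa-condition}, shows that $S$ is copositive, and so the inequality follows.

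The main obstacle is bookkeeping in degenerate cases, which I would handle by continuity:
\begin{itemize}
\item coincident points, such as some $y_i=A$ or $y_i=B$, can be merged into net coefficients, which changes the sign pattern to one with fewer negative points already covered;
\item $D=0$ does not arise, since $A\ne B$;
\item cases where $S_{ii}=0$ are covered by the inequality in \cref{prop:transverse} holding by continuity.
\end{itemize}
The substantive difficulty is entirely absorbed in \cref{prop:transverse}, hence in \cref{thm:geodesic-section}.
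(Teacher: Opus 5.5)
Your proposal is correct and follows the paper's proof. The paper uses the same exact identity $\Delta=c^{\mathsf T}Sc+D^{-1}(bD-g^{\mathsf T}c)^2$, which is your completed square. It then reduces to copositivity of $S$, which it obtains by applying \cref{prop:transverse} to each four-point restriction and \cref{lem:copositive-closure} with $\theta=\kappa_q$.

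Your degenerate-case bookkeeping is largely unnecessary. The support points of a coefficient vector on $X$ are distinct, and $S_{ii}>0$ automatically because $q\le\log_2 3<2$.
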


\begin{proof}
Write the positive coefficients as $a,b>0$ and the negative coefficients as $c_1,\ldots,c_r>0$, normalized by
\[
 a+b=\sum_{j=1}^r c_j=1.
\]
Use the notation of the preceding section with distinguished points $A,B$
and remaining points $U_1,\ldots,U_r$. Let $c=(c_1,\ldots,c_r)^{\mathsf T}$. A direct expansion of the gap in \eqref{eq:generalized-roundness} gives the exact identity
\begin{equation}\label{eq:gap-decomposition}
 \Delta
 =c^{\mathsf T}Sc
 +D^{-1}\bigl(bD-g^{\mathsf T}c\bigr)^2.
\end{equation}
Thus positive semidefiniteness of $S$ is unnecessary. The coefficient vector $c$ lies in the nonnegative orthant, so copositivity of $S$ suffices.

For $j<k$, set
\[
 \varepsilon_{jk}
 =S_{jk}+\kappa_q\sqrt{S_{jj}S_{kk}}\ge0.
\]
Applying \cref{lem:copositive-closure} with $\theta=\kappa_q$ gives the exact decomposition
\begin{align}
 \Delta={}&
 D^{-1}\bigl(bD-g^{\mathsf T}c\bigr)^2
 +\bigl(1-(r-1)\kappa_q\bigr)\sum_{j=1}^r x_j^2\notag\\
 &+\kappa_q\sum_{j<k}(x_j-x_k)^2
 +2\sum_{j<k}c_jc_k\varepsilon_{jk},
 \label{eq:two-anchor-stability}
\end{align}
where $x_j=c_j\sqrt{S_{jj}}$. Under \eqref{eq:kappa-condition}, every term on the right is nonnegative.
\end{proof}

At the six-element exponent, $r=3$ and $\kappa_q=1/2$. The copositive term becomes
\begin{equation}\label{eq:three-variable-copositive}
 \sum_{j=1}^3x_j^2-\sum_{j<k}x_jx_k
 =\frac12\sum_{j<k}(x_j-x_k)^2.
\end{equation}

\begin{definition}
For integers $p\ge1$ and $r\ge2$, let $\CS(p,r)$ denote the graph metric
of the complete split graph $K_p\vee\overline{K_r}$. Thus distances
within the clique part and between the two parts are $1$, while distances
between distinct vertices of the independent part are $2$.
\end{definition}

Every $\CS(p,r)$ is Ptolemaic. In a four-point restriction, each product
of opposite distances belongs to $\{1,2,4\}$. A product equal to $4$
requires all four points to lie in the independent part, in which case
all three products equal $4$. Otherwise the largest product is at most
$2$, while each of the other two is at least $1$.

\begin{corollary}[Sharp two-anchor complete-split constant]\label{cor:split-two}
Suppose
\[
 q_{2,r}=\log_2\frac{3r}{2(r-1)}
\]
belongs to $[1,\log_2 3]$. Then every Ptolemaic metric satisfies the negative-type inequality for every $2+r$ sign pattern whenever $1\le q\le q_{2,r}$. The exponent is sharp on $\CS(2,r)$.
\end{corollary}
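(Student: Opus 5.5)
The statement has two halves: validity for every Ptolemaic metric, and sharpness on $\CS(2,r)$.

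\emph{Validity.} I will reduce this to \cref{thm:two-anchor}. Write $\kappa_q=(2^{q+1}-3)/3$. This is increasing in $q$. At $q=q_{2,r}$ we have $2^q=3r/(2(r-1))$, so
\[
 \kappa_{q_{2,r}}=\frac{3r/(r-1)-3}{3}=\frac{1}{r-1}.
\]
Hence for $1\le q\le q_{2,r}$ we get $\kappa_q\le 1/(r-1)$, which is condition \eqref{eq:kappa-condition}. Since $q_{2,r}\le\log_2 3$ by hypothesis, the whole interval lies inside the range $[1,\log_2 3]$ where \cref{thm:two-anchor} applies. That theorem then gives the inequality for every $2+r$ sign pattern.

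\emph{Sharpness.} Since $\CS(2,r)$ is Ptolemaic, I need to exhibit, for each $q>q_{2,r}$, a $2+r$ vector on $\CS(2,r)$ whose gap is negative. Take weights $a=b=1/2$ on the two clique vertices and $c_j=1/r$ on the $r$ independent vertices.

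The gap in \eqref{eq:generalized-roundness} is
\[
 \Delta=\sum_{i,j}a_ic_j\,d^q-ab\cdot1-\sum_{j<k}c_jc_k\,2^q .
\]
The three terms evaluate as follows:
\begin{itemize}
 \item all cross distances equal $1$, so the cross sum is $1$;
 \item the clique term is $1/4$;
 \item the independent term is $\binom r2 r^{-2}2^q=\dfrac{(r-1)2^q}{2r}$.
\end{itemize}
Therefore
\[
 \Delta=\frac34-\frac{(r-1)2^q}{2r}.
\]
This vanishes exactly at $2^q=3r/(2(r-1))$ and is negative for every larger $q$.

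As a consistency check against the decomposition \eqref{eq:two-anchor-stability}, every term there should vanish at $q=q_{2,r}$:
\begin{itemize}
 \item $bD=g^{\mathsf T}c$ holds by symmetry, since $g_U=(1+1-1)/2=1/2$ and $\sum_j c_j=1$, giving $g^{\mathsf T}c=1/2=bD$;
 \item the equal $x_j$ kill the squared-difference term;
 \item the correlation $S_{jk}/\sqrt{S_{jj}S_{kk}}$ equals $-\kappa_q$, so every $\varepsilon_{jk}=0$;
 \item the factor $1-(r-1)\kappa_q$ vanishes.
\end{itemize}

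This argument does not reach sharpness at $q_{2,r}$ itself. Sharpness is meant as a supremum statement: the inequality fails for every $q>q_{2,r}$.

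The computation is routine. The only points needing care are confirming that the interval hypothesis ensures \cref{thm:two-anchor} is applicable, and the monotonicity of $\kappa_q$.
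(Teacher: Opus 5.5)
Your proposal is correct and follows the paper's proof: validity comes from \cref{thm:two-anchor} once $\kappa_{q_{2,r}}=1/(r-1)$. Sharpness comes from the weights $1/2$ on the clique and $1/r$ on the independent part of $\CS(2,r)$, which reduce \eqref{eq:generalized-roundness} to $2^q\le 3r/[2(r-1)]$; you also make explicit the monotonicity of $\kappa_q$ that the paper leaves implicit, and your consistency check against \eqref{eq:two-anchor-stability} is accurate but not needed.
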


\begin{proof}
At $q=q_{2,r}$,
\[
 \kappa_q=\frac{2^{q+1}-3}{3}=\frac1{r-1}.
\]
For sharpness, assign weights $1/2$ to the two vertices in the clique part and $1/r$ to the vertices in the independent part. Then \eqref{eq:generalized-roundness} becomes
\[
 \frac14+\frac{r-1}{2r}\,2^q\le1,
\]
which is equivalent to $2^q\le3r/[2(r-1)]$.
\end{proof}

Within the range $1\le q\le\log_2 3$, the first three sharp two-anchor values are
\[
\begin{array}{c|c|c}
 r&2^{q_{2,r}}&q_{2,r}\\ \hline
 2&3&\log_2 3\\
 3&9/4&\log_2(9/4)\\
 4&2&1.
\end{array}
\]

\section{The five-point theorem}

By definition, $P(5)$ concerns metrics on exactly five points. Set
\[
 \qstar=\log_2\frac94.
\]
Then
\[
 h_{\qstar}=\frac14,
 \qquad
 \kappa_{\qstar}=\frac12.
\]

\begin{theorem}[Five-point Ptolemaic negative type]\label{thm:five-point}
Every five-point Ptolemaic metric has $\qstar$-negative type, and the exponent is sharp. Hence
\[
 \boxed{P(5)=\log_2\frac94.}
\]
\end{theorem}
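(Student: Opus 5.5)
The plan is to split a nonzero zero-sum coefficient vector on the five points by sign pattern, handle each pattern with the results already proved, and then prove sharpness separately. After discarding zero coefficients and possibly replacing the vector by its negative (the quadratic form is unchanged), the pattern is $a+b$ with $a\le b$ and $a+b\le5$. So it is one of $1+1$, $1+2$, $1+3$, $1+4$, $2+2$ or $2+3$. Discarding zero coefficients restricts the form to a sub-metric, which is still Ptolemaic, so every pattern can be treated on the subset carrying its support.

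\textbf{One-positive patterns.} For $1+r$ with $r\ge2$, I apply \cref{thm:one-anchor}. It requires $h_{\qstar}\le 2/(r-1)$. Since $h_{\qstar}=1/4$ and $2/(r-1)\ge 1/2$ for $r\le4$, this holds for $r=2,3,4$. The $1+1$ pattern is trivial: the gap is $d(x,y)^q\ge0$.

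\textbf{Two-positive patterns.} For $2+r$ with $r=2,3$, I apply \cref{thm:two-anchor}. First check that $\qstar\in[1,\log_2 3]$, which holds because $2\le 9/4\le3$. The condition is then $\kappa_{\qstar}=1/2\le 1/(r-1)$, which holds for $r=2$ and is an equality for $r=3$. The heavy lifting sits inside \cref{thm:two-anchor}: the identity \eqref{eq:gap-decomposition} reduces the claim to copositivity of the $r\times r$ Schur complement $S$. \cref{prop:transverse}, which rests on \cref{thm:geodesic-section}, gives $S_{UV}\ge-\tfrac12\sqrt{S_{UU}S_{VV}}$ for every pair $U,V$. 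Each such pair together with $A,B$ forms a four-point Ptolemaic submetric, so the pairwise bound applies inside the five-point space. \cref{lem:copositive-closure} then gives copositivity, and at $r=3$ the quadratic term is exactly the sum of squares \eqref{eq:three-variable-copositive}. This covers every pattern, so every five-point Ptolemaic metric has $\qstar$-negative type.

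\textbf{Sharpness.} I use $\CS(2,3)$, which is Ptolemaic and has five points. By \cref{cor:split-two} with $r=3$, the weights $1/2,1/2$ on the clique and $1/3$ on each independent vertex violate \eqref{eq:generalized-roundness} as soon as $2^q>9/4$. Hence $P(5)\le\qstar$, and with the first part, $P(5)=\qstar$.

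\textbf{Main obstacle.} The only genuinely delicate pattern is $2+3$. There $\kappa_{\qstar}=1/2$ is exactly the copositivity threshold for three variables, so there is no slack anywhere. Everything therefore depends on the sharp lower bound in \cref{thm:geodesic-section}, including the endpoint reduction and the inversion step of \cref{prop:transverse} being valid for every pair $U,V$ among the three negative points. I also need to handle degenerate cases where some $S_{UU}=0$ or $D=0$ by continuity. Since the points are distinct, $D>0$; a vanishing $S_{UU}$ is handled by perturbation, or directly since the bound then forces $S_{UV}$ to be compatible with copositivity.
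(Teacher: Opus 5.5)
Your proposal is correct and follows essentially the same route as the paper. The $1+r$ patterns come from the one-anchor theorem with $h_{\qstar}=1/4$, and the $2+r$ patterns come from the two-anchor theorem with $\kappa_{\qstar}=1/2=1/(3-1)$; for sharpness, you use $\CS(2,3)$ with weights $1/2,1/2$ and $-1/3,-1/3,-1/3$, just as the paper does.
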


\begin{proof}
Let $z$ be a nonzero zero-sum coefficient vector. After replacing $z$ by $-z$ if necessary, the smaller sign support has size one or two.

For the $1+4$ sign pattern, \cref{thm:one-anchor} applies because
\[
 h_{\qstar}=\frac14<\frac23.
\]
For the $2+3$ sign pattern, \cref{thm:two-anchor} applies because
\[
 \kappa_{\qstar}=\frac12=\frac1{3-1}.
\]
After zero coefficients are removed, the same arguments apply to every proper subset; the $1+1$ case is immediate. Thus every zero-sum quadratic form is nonpositive.

The metric $\CS(2,3)$ is Ptolemaic. With weights $1/2,1/2$ on the clique part and $1/3,1/3,1/3$ on the independent part, \eqref{eq:generalized-roundness} becomes
\[
 \frac14+\frac13\,2^q\le1.
\]
This is equivalent to $2^q\le9/4$, so no larger universal exponent is possible.
\end{proof}

\begin{theorem}[Strictness below the endpoint and equality at the endpoint]\label{thm:five-point-stability}
Let $(X,d)$ be a Ptolemaic metric on five distinct points.

For every $1\le q<\qstar$, the metric has strict $q$-negative type. At $q=\qstar$, a nonzero null vector must have sign pattern $2+3$. With the notation of \cref{thm:two-anchor}, equality is equivalent to
\begin{equation}\label{eq:equality-system}
 \begin{gathered}
 bD=g^{\mathsf T}c,\\
 c_1\sqrt{S_{11}}=c_2\sqrt{S_{22}}=c_3\sqrt{S_{33}},\\
 S_{ij}=-\frac12\sqrt{S_{ii}S_{jj}}\qquad(i<j).
 \end{gathered}
\end{equation}
The balanced complete split metric $\CS(2,3)$ realizes this system.
\end{theorem}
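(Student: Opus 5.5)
The plan is to read strictness and the equality structure directly off the exact stability identities \eqref{eq:one-stability} and \eqref{eq:two-anchor-stability}. After replacing $z$ by $-z$, a nonzero zero-sum vector has sign pattern $1+1$, $1+2$, $1+3$, $1+4$, $2+2$ or $2+3$.

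For one-positive patterns with $r\le4$ negatives and $q\le\qstar$, we have $h_q\le 1/4<2/(r-1)$. So the coefficient $1-(r-1)h_q/2$ in \eqref{eq:one-stability} is strictly positive, and $\Delta\ge(1-(r-1)h_q/2)\sum u_i^2>0$, because $u_i=c_i\rho_i^{q/2}>0$ for distinct points. Thus $1+r$ patterns are strict at every $q\le\qstar$, including $q=\qstar$. The $1+1$ case is trivial.

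For the $2+r$ patterns I use \eqref{eq:two-anchor-stability}. When $r=2$ the coefficient $1-\kappa_q$ is positive. When $r=3$ and $q<\qstar$, the coefficient $1-2\kappa_q$ is positive, since $\kappa_q$ is strictly increasing in $q$ and equals $1/2$ at $\qstar$. In both cases $\Delta\ge(1-(r-1)\kappa_q)\sum x_j^2$, so strictness follows once some $x_j>0$, that is, once some $S_{jj}>0$.

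The main obstacle is the degenerate case where all $S_{jj}=0$. Here I need a nondegeneracy input: $S_{jj}=0$ means that in the Schoenberg geometry of $d^{q/2}$, the point $U_j$ lies on the line through $A,B$. For $q>1$ this is impossible among distinct points. Indeed, $d^{q/2}$ with $q/2<1$ is a snowflake, and the three-point Euclidean embedding of $d^{q/2}$ is collinear only if equality holds in the triangle inequality for $d^{q/2}$. That would force $d(x,y)^{q/2}=d(x,z)^{q/2}+d(z,y)^{q/2}>(d(x,z)+d(z,y))^{q/2}\ge d(x,y)^{q/2}$, which is a contradiction.

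At $q=1$, collinearity can occur, and this is the genuinely delicate spot. I would handle it by noting that if every $S_{jj}=0$, then $\varepsilon_{jk}=S_{jk}=0$ as well, so $\Delta$ reduces to $D^{-1}(bD-g^{\mathsf T}c)^2$. I would then show that $bD-g^{\mathsf T}c\ne0$ using the collinear structure: all points lie on a geodesic line through $A,B$, so the form becomes a one-dimensional quadratic, and strict $1$-negative type of a subset of $\R$ follows from $|x-y|$ being strictly conditionally negative definite on distinct reals. Alternatively, I would cite strict $1$-negative type of Ptolemaic metrics with at most five points if that is available. I expect this to need the most care.

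At $q=\qstar$, the one-positive patterns and the $2+2$ pattern are strict by the above, so a nonzero null vector must be $2+3$. In \eqref{eq:two-anchor-stability} the coefficient $1-2\kappa_{\qstar}$ vanishes, and $\Delta$ is a sum of three nonnegative terms. So $\Delta=0$ if and only if each term vanishes:
\begin{itemize}
\item $bD=g^{\mathsf T}c$;
\item $x_1=x_2=x_3$, i.e.\ $c_j\sqrt{S_{jj}}$ is constant in $j$;
\item $c_jc_k\varepsilon_{jk}=0$, i.e.\ $S_{jk}=-\tfrac12\sqrt{S_{jj}S_{kk}}$ since $c_j>0$.
\end{itemize}
This is exactly \eqref{eq:equality-system}.

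Finally, for $\CS(2,3)$ with weights $a=b=1/2$ and $c_j=1/3$, I compute directly at $2^q=9/4$. Here $D=1$ and $g_U=1/2$, so $g^{\mathsf T}c=1/2=bD$. Also $H_{UU}=1$ and $H_{UV}=(2-9/4)/2=-1/8$. Hence $S_{UU}=3/4$ and $S_{UV}=-1/8-1/4=-3/8=-\tfrac12\cdot\tfrac34$, which verifies the system.
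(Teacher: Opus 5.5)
Your proposal is correct and follows the paper's proof. Strictness comes from the positive leading coefficients in the one-anchor and two-anchor stability identities. At $q=\qstar$, the equality system is the simultaneous vanishing of the three remaining nonnegative terms, and $\CS(2,3)$ is checked by direct substitution. You are also more explicit than the paper about the smaller supports $1+1,\dots,1+3$ and $2+2$.

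One correction: your claim that collinearity can occur at $q=1$ is false, so the whole $q=1$ detour should be dropped.
\begin{itemize}
\item Your snowflake argument uses only $q/2<1$, that is, $q<2$, not $q>1$.
\item At $q=1$ the relevant metric is $d^{1/2}$, and $s^{1/2}+t^{1/2}>(s+t)^{1/2}$ for $s,t>0$. So every $d^{q/2}$-triangle on three distinct points is nondegenerate, and $S_{jj}>0$ for all $j$ throughout $1\le q\le\qstar$.
\item This is exactly the paper's assertion ``since $q<2$ and the points are distinct''. Degeneracy could only occur at $q=2$.
\end{itemize}

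Your fallbacks are not just unnecessary. The collinear reduction conflates collinearity in the Euclidean embedding of $d^{q/2}$ with lying on a geodesic of $d$. The appeal to strict $1$-negative type ``if available'' is circular here, since that is part of what is being proved. With the snowflake argument applied on the full range, your proof is complete.
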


\begin{proof}
For the $1+4$ pattern, \eqref{eq:one-stability} has a strictly positive coefficient on $\sum u_i^2$ throughout $1\le q\le\qstar$. For the $2+3$ pattern, $\kappa_q<1/2$ when $q<\qstar$. Since $q<2$ and the points are distinct, every three-point $d^{q/2}$-triangle is nondegenerate, so $S_{ii}>0$. The coefficient
\[
 1-2\kappa_q
\]
in \eqref{eq:two-anchor-stability} is then strictly positive.

At $q=\qstar$, formula \eqref{eq:two-anchor-stability} reduces to
\begin{align*}
 \Delta={}&D^{-1}\bigl(bD-g^{\mathsf T}c\bigr)^2
 +\frac12\sum_{i<j}(x_i-x_j)^2
 +2\sum_{i<j}c_ic_j\varepsilon_{ij}.
\end{align*}
All weights and all $S_{ii}$ are positive. Equality therefore holds exactly under \eqref{eq:equality-system}. Direct substitution for $\CS(2,3)$ gives
\[
 D=1,
 \qquad
 S_{ii}=\frac34,
 \qquad
 S_{ij}=-\frac38,
\]
and the balanced coefficient vector $(1/2,1/2,-1/3,-1/3,-1/3)$ satisfies the remaining two conditions.
\end{proof}

\begin{corollary}[Six-element triangular-hyperfield threshold]\label{cor:q6}
With the notation $q(n)=q(U_{2,n})$ of \cite{BHKL}, one has
\[
 \boxed{q(6)=\log_2\frac94.}
\]
\end{corollary}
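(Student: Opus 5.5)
The plan is to combine \cref{thm:five-point} with the correspondence \eqref{eq:correspondence} from Proposition~7.5 of \cite{BHKL}. Setting $n=6$ there gives $q(6)=P(5)$. \Cref{thm:five-point} identifies $P(5)=\log_2(9/4)$, so the boxed value follows at once.

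Two checks keep this reduction honest. First, the definition \eqref{eq:P-definition} of $P(m)$ must match exactly the quantity used in \cite{BHKL}: a supremum over Ptolemaic metrics on exactly $m$ points. The introduction notes that zero coefficients restrict the quadratic form to any subset, so ``exactly five'' versus ``at most five'' points makes no difference. Second, the supremum must behave at the endpoint. The positive half of \cref{thm:five-point} shows that every five-point Ptolemaic metric has $\qstar$-negative type. Hence every five-point Ptolemaic metric has $q$-negative type for $1\le q\le\qstar$, by the one-anchor and two-anchor theorems, which hold throughout that range. For $q<1$ I would use the classical fact that $q$-negative type descends to smaller exponents, so I do not need the range below $1$ separately. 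This gives $P(5)\ge\qstar$.

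The sharpness half of \cref{thm:five-point} uses $\CS(2,3)$. For $q>\qstar$ the balanced vector violates \eqref{eq:generalized-roundness}, which gives $P(5)\le\qstar$. Substituting into \eqref{eq:correspondence} yields $q(6)=\log_2(9/4)$. This agrees with the even case of \eqref{eq:BHKL-conjecture}, since $2\log_2(6/4)=\log_2(9/4)$.

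I do not expect a real obstacle, since all the content is in \cref{thm:five-point} and the cited correspondence. The only point needing care is the convention matching: the index shift $n\mapsto n-1$ (the $2+3$ sign pattern lives on five points, corresponding to $U_{2,6}$), and the fact that both suprema are taken over $q>0$. I would state this matching explicitly rather than reprove the correspondence.
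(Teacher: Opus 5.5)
Your proposal is correct and matches the paper's proof, which combines the correspondence $q(n)=P(n-1)$ from Proposition~7.5 of \cite{BHKL} with \cref{thm:five-point} at $n=6$. Your discussion of exponents below $\qstar$ is harmless but not needed: for the supremum it is enough that $\qstar$ itself is admissible and that $\CS(2,3)$ rules out every $q>\qstar$.
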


\begin{proof}
By Proposition~7.5 of Baker, Huh, Kummer, and Lorscheid, $q(n)=P(n-1)$ \cite{BHKL}. Apply \cref{thm:five-point} with $n=6$.
\end{proof}

\section{Ptolemaic inversion}

Let $(X,d)$ be a finite metric space and fix $p\in X$.  Its metric
inversion at $p$ is the function $d^{(p)}$ defined by
\begin{equation}\label{eq:inversion}
 \begin{aligned}
 d^{(p)}(p,p)&=0,\\
 d^{(p)}(p,x)&=\frac1{d(p,x)} &&(x\ne p),\\
 d^{(p)}(x,y)&=\frac{d(x,y)}{d(x,p)d(y,p)} &&(x,y\ne p).
 \end{aligned}
\end{equation}

\begin{lemma}[Ptolemaic inversion]\label{lem:ptolemy-inversion}
If $d$ is Ptolemaic, then $d^{(p)}$ is a Ptolemaic metric.
\end{lemma}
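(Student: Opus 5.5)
The plan is to verify the metric axioms and the Ptolemy inequality for $d^{(p)}$ directly, splitting quadruples and triples according to whether they contain $p$. The key observation is that each inversion distance is a ratio of $d$-distances weighted by the factors $1/d(\cdot,p)$. After clearing these factors, every inequality for $d^{(p)}$ becomes an instance of the Ptolemy inequality for $d$.

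\emph{Positivity and symmetry.} These are immediate from \eqref{eq:inversion}, because $d(x,y)>0$ for $x\ne y$.

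\emph{Triangle inequality.} I would handle this case by case.
\begin{itemize}
\item For $x,y,z\ne p$, the inequality $d^{(p)}(x,y)\le d^{(p)}(x,z)+d^{(p)}(z,y)$ becomes, after multiplying by $d(x,p)d(y,p)d(z,p)$,
\[
 d(x,y)d(z,p)\le d(x,z)d(y,p)+d(z,y)d(x,p).
\]
This is the Ptolemy inequality \eqref{eq:ptolemy-definition} for the quadruple $x,y,z,p$.
\item For a triangle containing $p$, say $d^{(p)}(p,x)\le d^{(p)}(p,y)+d^{(p)}(y,x)$, multiplying by $d(x,p)d(y,p)$ gives $d(y,p)\le d(x,p)+d(x,y)$. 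This is the original triangle inequality.
\item The reverse orientation, $d^{(p)}(x,y)\le d^{(p)}(x,p)+d^{(p)}(p,y)$, becomes $d(x,y)\le d(y,p)+d(x,p)$, again a triangle inequality.
\item Degenerate cases with repeated points are trivial.
\end{itemize}

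\emph{Ptolemy inequality.} Take four points.
\begin{itemize}
\item If none of them is $p$, each of the three products of opposite distances acquires the same factor $\bigl(d(x,p)d(y,p)d(z,p)d(w,p)\bigr)^{-1}$. The inequality for $d^{(p)}$ is therefore exactly the inequality for $d$.
\item If one point is $p$, say the quadruple is $p,x,y,z$, each opposite product has the form $d^{(p)}(p,u)\,d^{(p)}(v,w)=d(v,w)/\bigl(d(u,p)d(v,p)d(w,p)\bigr)$. The common denominator is $d(x,p)d(y,p)d(z,p)$, so the Ptolemy inequalities reduce to the triangle inequalities for $x,y,z$ in $d$.
\item Quadruples with repeated points reduce to trivial inequalities or to the triangle inequality for $d^{(p)}$ already proved.
\end{itemize}

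I expect no real obstacle. The only care needed is bookkeeping of the normalizing factors in each case, and checking that every orientation of each inequality, which of the three products sits on the left, is covered. By the symmetric form of the products, all orientations reduce to the same identities.
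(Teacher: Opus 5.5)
Your proof is correct and follows the same route as the paper. Triangle inequalities for $d^{(p)}$ on triples avoiding $p$ become Ptolemy inequalities for $d$ on the triple together with $p$, those involving $p$ become triangle inequalities for $d$, Ptolemy inequalities avoiding $p$ rescale by a common positive factor, and those involving $p$ reduce to triangle inequalities for $d$; your version just writes out the denominators and orientations explicitly, and the repeated-point Ptolemy cases are in fact always trivial, needing only symmetry and vanishing on the diagonal.
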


\begin{proof}
For $x,y\ne p$, the triangle inequality
\[
 d^{(p)}(x,y)\le d^{(p)}(x,p)+d^{(p)}(p,y)
\]
is equivalent, after multiplication by $d(x,p)d(y,p)$, to the triangle
inequality for $d$.  A triangle inequality for three points distinct from
$p$ becomes, after clearing the common positive denominator, a Ptolemy
inequality for those three points together with $p$.

A Ptolemy inequality for four points distinct from $p$ is multiplied by one
common positive factor under \eqref{eq:inversion}.  A Ptolemy inequality
involving $p$ becomes a triangle inequality for $d$.  Hence all metric and
Ptolemy inequalities are preserved.
\end{proof}

For $q>0$ and $c\in\R^X$ with $\sum_xc_x=0$, define
\begin{equation}\label{eq:quadratic-form}
 \Qform_q(d;c)=\sum_{x,y\in X}c_xc_y d(x,y)^q
\end{equation}
and the local potential
\begin{equation}\label{eq:potential}
 \Phi_c(x)=\sum_{y\in X}c_y d(x,y)^q.
\end{equation}
When two metrics occur simultaneously, we write $\Phi_c^d$ to indicate
the metric used in this definition.  Then
\begin{equation}\label{eq:potential-sum}
 \Qform_q(d;c)=\sum_{x\in X}c_x\Phi_c(x).
\end{equation}

\section{Coefficient transport and support reduction}

\begin{theorem}[Quadratic-form covariance]\label{thm:transport}
Let $d$ be a metric on $X$, let $q>0$, and let $c\in\R^X$ satisfy
$\sum_xc_x=0$.  For $p\in X$, define $T_pc\in\R^X$ by
\begin{equation}\label{eq:transport}
 (T_pc)_x=c_xd(x,p)^q\quad(x\ne p),
 \qquad
 (T_pc)_p=-\Phi_c(p).
\end{equation}
Then
\begin{equation}\label{eq:transport-zero}
 \sum_x(T_pc)_x=0
\end{equation}
and
\begin{equation}\label{eq:transport-invariance}
 \boxed{\Qform_q(d^{(p)};T_pc)=\Qform_q(d;c).}
\end{equation}
If $c_p\Phi_c(p)>0$, then the sign of the coefficient at $p$ is reversed by
$T_p$, while the sign of every other nonzero coefficient is preserved.
Moreover, if $c'=T_pc$ and $d'=d^{(p)}$, then
\begin{align}
 \Phi_{c'}^{d'}(p)&=-c_p,\label{eq:potential-transform-p}\\
 \Phi_{c'}^{d'}(x)
 &=\frac{\Phi_c^d(x)-\Phi_c^d(p)-c_p d(x,p)^q}{d(x,p)^q}
 \qquad(x\ne p),\label{eq:potential-transform-x}
\end{align}
and transport is involutive:
\begin{equation}\label{eq:transport-involution}
 T_p^{d'}(T_p^d c)=c.
\end{equation}
\end{theorem}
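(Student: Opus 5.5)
The plan is a direct computation organized around the potential identity \eqref{eq:potential-sum}. Write $\delta_x=d(x,p)^q$ for $x\ne p$, and set $c'=T_pc$, $d'=d^{(p)}$. Then $(d'(x,y))^q=d(x,y)^q/(\delta_x\delta_y)$ for $x,y\ne p$, and $(d'(x,p))^q=1/\delta_x$.

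The zero-sum property \eqref{eq:transport-zero} is immediate. Indeed,
\[
 \sum_{x\ne p}c_x\delta_x=\Phi_c(p),
\]
because the $y=p$ term of $\Phi_c(p)$ vanishes. Adding $(T_pc)_p=-\Phi_c(p)$ gives zero.

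Next I would compute the potentials of $c'$ in $d'$, which gives \eqref{eq:potential-transform-p} and \eqref{eq:potential-transform-x}. At $p$,
\[
 \Phi_{c'}^{d'}(p)=\sum_{x\ne p}c_x\delta_x\cdot\frac1{\delta_x}=\sum_{x\ne p}c_x=-c_p .
\]
For $x\ne p$,
\[
 \Phi_{c'}^{d'}(x)=\sum_{y\ne p,x}c_y\delta_y\,\frac{d(x,y)^q}{\delta_x\delta_y}-\Phi_c(p)\frac1{\delta_x}
 =\frac{\Phi_c(x)-c_p\delta_x-\Phi_c(p)}{\delta_x},
\]
where the first sum equals $\Phi_c(x)-c_p\delta_x$. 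The invariance \eqref{eq:transport-invariance} then follows from \eqref{eq:potential-sum}:
\[
 \Qform_q(d';c')=c'_p\Phi_{c'}^{d'}(p)+\sum_{x\ne p}c_x\delta_x\Phi_{c'}^{d'}(x).
\]
The first term is $\Phi_c(p)c_p$. The second sum equals
\[
 \sum_{x\ne p}c_x\Phi_c(x)-c_p\Phi_c(p)-\Phi_c(p)\sum_{x\ne p}c_x
 =\bigl(\Qform_q(d;c)-c_p\Phi_c(p)\bigr)-c_p\Phi_c(p)+c_p\Phi_c(p).
\]
Adding the two pieces gives $\Qform_q(d;c)$. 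This bookkeeping of the $\pm c_p\Phi_c(p)$ terms is the only delicate point, and I expect it to be the main place to be careful.

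The sign statements are direct. For $x\ne p$ we have $\delta_x>0$, so $c'_x$ has the same sign as $c_x$. At $p$, the condition $c_p\Phi_c(p)>0$ means $c'_p=-\Phi_c(p)$ has sign opposite to $c_p$.

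For involutivity, I would first note that $(d')^{(p)}=d$: we have $1/d'(x,p)=d(x,p)$, and
\[
 \frac{d'(x,y)}{d'(x,p)\,d'(y,p)}=d(x,y).
\]
Applying $T_p^{d'}$ to $c'$ gives, for $x\ne p$, the coefficient $c'_x\,d'(x,p)^q=c_x\delta_x/\delta_x=c_x$. At $p$ it gives $-\Phi_{c'}^{d'}(p)=c_p$ by \eqref{eq:potential-transform-p}. Hence $T_p^{d'}(T_p^dc)=c$, which is \eqref{eq:transport-involution}.
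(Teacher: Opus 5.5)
Your proof is correct and follows the same direct computation as the paper's, with one reordering: you compute the transformed potentials first and derive the invariance from \eqref{eq:potential-sum}, whereas the paper compares the double sums directly, noting that each term with $x,y\ne p$ is individually invariant and that the $p$-row and $p$-column contribute $2(-\Phi_c(p))\sum_{x\ne p}c_x=2c_p\Phi_c(p)$ on both sides. The paper's termwise matching avoids the $\pm c_p\Phi_c(p)$ bookkeeping you flagged, while your ordering has the minor advantage of establishing \eqref{eq:potential-transform-p} and \eqref{eq:potential-transform-x} along the way rather than as a separate substitution.
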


\begin{proof}
Since $d(p,p)=0$,
\[
 \Phi_c(p)=\sum_{x\ne p}c_xd(x,p)^q,
\]
which proves \eqref{eq:transport-zero}.  The terms in
\eqref{eq:transport-invariance} indexed by $x,y\ne p$ satisfy
\[
 (T_pc)_x(T_pc)_y d^{(p)}(x,y)^q
 =c_xc_y d(x,y)^q.
\]
The terms containing $p$ contribute
\begin{align*}
 2(T_pc)_p\sum_{x\ne p}(T_pc)_x d^{(p)}(p,x)^q
 &=2(-\Phi_c(p))\sum_{x\ne p}c_x\\
 &=2c_p\Phi_c(p),
\end{align*}
which is the contribution of the $p$-row and $p$-column to
$\Qform_q(d;c)$.  This proves the identity.

For $x\ne p$, multiplication by $d(x,p)^q>0$ preserves the sign.  If
$c_p\Phi_c(p)>0$, then $-\Phi_c(p)$ has the opposite sign from $c_p$.
The displayed formulas for the transformed potential follow by direct
substitution.  In particular,
\[
 \Phi_{c'}^{d'}(p)
 =\sum_{x\ne p}c_xd(x,p)^q d'(p,x)^q
 =\sum_{x\ne p}c_x=-c_p.
\]
For $x\ne p$, the same substitution gives
\eqref{eq:potential-transform-x}.  Applying the transport definition once
more now returns every coefficient of $c$, proving
\eqref{eq:transport-involution}.
\end{proof}

The argument only uses closure of the metric class under inversion.

\begin{theorem}[Reduction to adjacent support sizes]\label{thm:neighbor-reduction}
Let $\mathcal C$ be a class of finite metrics closed under metric inversion,
fix $q>0$, and let $a,b\ge2$.  Suppose the $q$-negative-type inequality
holds throughout $\mathcal C$ for both sign patterns
\[
 (a-1)+(b+1)
 \qquad\text{and}\qquad
 (a+1)+(b-1).
\]
Then it also holds for the sign pattern $a+b$.
\end{theorem}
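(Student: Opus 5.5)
Let $d\in\mathcal C$ and let $c$ be a zero-sum vector with sign pattern $a+b$, with positive support $X_+$ ($|X_+|=a$) and negative support $X_-$ ($|X_-|=b$). We must show $\Qform_q(d;c)\le0$. Points with zero coefficient can be discarded: restricting $d$ to the support still gives a metric in $\mathcal C$ if the class is closed under restriction. If it is not, we simply keep those points, since zero coefficients stay zero under $T_p$ for $p$ outside them. So assume every point carries a nonzero coefficient, and suppose for contradiction that $\Qform_q(d;c)>0$.

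The key step uses \eqref{eq:potential-sum}:
\[
 0<\Qform_q(d;c)=\sum_{x}c_x\Phi_c(x).
\]
Hence some index $p$ has $c_p\Phi_c(p)>0$. Apply \cref{thm:transport} at this $p$. The vector $c'=T_pc$ is zero-sum and satisfies $\Qform_q(d^{(p)};c')=\Qform_q(d;c)>0$. The sign at $p$ is reversed, and every other coefficient keeps its sign and stays nonzero, because it is multiplied by $d(x,p)^q>0$. The coefficient at $p$ is $-\Phi_c(p)$, which is nonzero because $c_p\Phi_c(p)>0$.

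So the sign pattern of $c'$ is $(a-1)+(b+1)$ if $p\in X_+$, and $(a+1)+(b-1)$ if $p\in X_-$. Since $a,b\ge2$, both counts $a-1$ and $b-1$ are at least one, so these are genuine patterns of the kind in the hypothesis. By closure under inversion, $d^{(p)}\in\mathcal C$. The hypothesis then gives $\Qform_q(d^{(p)};c')\le0$, which is a contradiction. Hence $\Qform_q(d;c)\le0$.

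I expect no serious obstacle here; the points needing care are bookkeeping. First, the index $p$ must make the product $c_p\Phi_c(p)$ strictly positive, not merely nonzero, so that the sign genuinely flips and the new coefficient is nonzero. Strict positivity of the sum guarantees this. Second, zero coefficients must not alter the support count: they remain zero for $x\ne p$, and $p$ itself has nonzero $c_p$. Third, the inequality for the pattern $a+b$ must be equivalent to the one for $b+a$, by replacing $c$ with $-c$, so that the hypothesis is read symmetrically. With these checked, the argument is a one-step transport.
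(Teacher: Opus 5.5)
Your proof is correct and is the same argument as the paper's. By \eqref{eq:potential-sum}, a positive form gives some $p$ with $c_p\Phi_c(p)>0$, and transport at $p$ (\cref{thm:transport}) together with inversion-closure then yields a counterexample with an adjacent sign pattern. Your sentence ``assume every point carries a nonzero coefficient'' is not justified without closure under restriction, but the argument never uses it: a zero-coefficient point can never be the chosen $p$, and zero coefficients stay zero under $T_p$, as you note yourself.
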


\begin{proof}
Assume that a zero-sum vector $c$ of sign pattern $a+b$ on a metric
$d\in\mathcal C$ satisfies $\Qform_q(d;c)>0$.  By
\eqref{eq:potential-sum}, some $p\in X$ satisfies
$c_p\Phi_c(p)>0$.  If $p$ belongs to the positive side, transport at $p$
produces an $(a-1)+(b+1)$ counterexample.  If $p$ belongs to the negative
side, it produces an $(a+1)+(b-1)$ counterexample.  In either case
\cref{thm:transport} preserves the positive value of the quadratic form,
and inversion-closure keeps the metric in $\mathcal C$, a contradiction.
\end{proof}

\begin{corollary}[Balanced-support reduction]\label{thm:balanced-reduction}
Let $\mathcal C$ be inversion-closed.  If the $q$-negative-type inequality
holds for the sign pattern $(k-1)+(k+1)$ and its reversal, then it holds for
the balanced sign pattern $k+k$.
\end{corollary}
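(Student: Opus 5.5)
This is a direct specialization of \cref{thm:neighbor-reduction}. I would take $a=b=k$, with $k\ge2$ so that the hypothesis $a,b\ge2$ of that theorem holds. The two adjacent sign patterns are then $(k-1)+(k+1)$ and $(k+1)+(k-1)$, and the second is the reversal of the first. So the hypotheses of the corollary are exactly those of \cref{thm:neighbor-reduction}, and its conclusion gives the inequality for $k+k$.

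To make the argument self-contained, I would restate the mechanism. Suppose a zero-sum vector $c$ of pattern $k+k$ on some $d\in\mathcal C$ has $\Qform_q(d;c)>0$. By \eqref{eq:potential-sum}, some $p$ satisfies $c_p\Phi_c(p)>0$. By \cref{thm:transport}, $T_pc$ on $d^{(p)}$ has the same positive quadratic form. The transport reverses only the sign of the coefficient at $p$ and preserves every other nonzero coefficient, so one side loses an element and the other gains one. Zero coefficients stay zero except possibly at $p$, and the coefficient at $p$ is nonzero because $\Phi_c(p)\ne0$.

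The resulting pattern is $(k-1)+(k+1)$ or its reversal. Since $d^{(p)}\in\mathcal C$ by inversion-closure, this is a counterexample to the hypothesis, a contradiction.

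The only point needing care is the sign convention. Replacing $c$ by $-c$ leaves $\Qform_q$ unchanged, which is why the hypothesis must cover both the pattern and its reversal, as stated. There is no real obstacle; the case $k=1$ would need separate treatment, but there the $1+1$ form is trivially nonpositive.
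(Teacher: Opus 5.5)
Your proof is correct and matches the paper's, which is exactly the specialization of \cref{thm:neighbor-reduction} to $a=b=k$, using that the two adjacent patterns are reversals of one another; your separate treatment of $k=1$ is a sensible addition. One small correction to your closing remark: since $\Qform_q(d;-c)=\Qform_q(d;c)$, the reversal hypothesis follows automatically from the $(k-1)+(k+1)$ hypothesis, so it is redundant rather than something that must be assumed separately.
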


\begin{proof}
Apply \cref{thm:neighbor-reduction} with $a=b=k$; the two adjacent
patterns are reversals of one another.
\end{proof}

For a fixed cardinality $m$, consider a metric class closed under inversion
and under snowflaking $d\mapsto d^\alpha$ for $0<\alpha\le1$, and let
$\tau_{m,a}$ denote the supremal exponent for which the $a+(m-a)$ sign
pattern is valid throughout the class.  The same theorem, together with
snowflake monotonicity, gives
\begin{equation}\label{eq:support-minimum-principle}
 \tau_{m,a}\ge
 \min\{\tau_{m,a-1},\tau_{m,a+1}\}
 \qquad(1<a<m-1).
\end{equation}

\begin{corollary}[Reduction of balanced sign patterns]\label{cor:balanced-redundant}
For Ptolemaic metrics on $2k$ points, the balanced $k+k$ negative-type
inequalities follow from the adjacent $(k-1)+(k+1)$ inequalities at the
same exponent.
\end{corollary}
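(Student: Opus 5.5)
The corollary will follow from \cref{thm:balanced-reduction} once the relevant class is fixed and shown to be inversion-closed. I take $\mathcal C$ to be the class of Ptolemaic metrics on exactly $2k$ distinct points. Three facts are needed.

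\emph{Inversion-closure.} By \cref{lem:ptolemy-inversion}, the inversion $d^{(p)}$ of a Ptolemaic metric is again a Ptolemaic metric. It lives on the same point set. Since $d^{(p)}(x,y)>0$ for $x\ne y$, no two points are identified, so the cardinality stays $2k$. Hence $\mathcal C$ is closed under metric inversion.

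\emph{Symmetry of sign patterns.} The quadratic form $\Qform_q(d;c)$ is even in $c$. So the $q$-negative-type inequality for a pattern $a+b$ is equivalent to the inequality for its reversal $b+a$. In particular, the hypothesis on $(k-1)+(k+1)$ vectors already covers the $(k+1)+(k-1)$ pattern.

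\emph{The reduction.} Fix $q>0$ and suppose the $(k-1)+(k+1)$ inequality holds on $\mathcal C$. Assume a $k+k$ vector $c$ on some $d\in\mathcal C$ had $\Qform_q(d;c)>0$. By \eqref{eq:potential-sum}, some point $p$ has $c_p\Phi_c(p)>0$. In particular $c_p\ne0$, so $p$ lies in the support of $c$.

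Apply \cref{thm:transport} at $p$. The vector $T_pc$ is zero-sum and $\Qform_q(d^{(p)};T_pc)=\Qform_q(d;c)>0$. Its sign pattern is $(k-1)+(k+1)$ or $(k+1)+(k-1)$: the sign at $p$ flips, every other nonzero coefficient keeps its sign, and the zero coefficients stay zero.

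Together with inversion-closure, this gives a counterexample in $\mathcal C$ to the adjacent inequality, a contradiction. This is exactly \cref{thm:balanced-reduction} with $a=b=k$. We need $k\ge2$ so that $a,b\ge2$; for $k=1$ the statement is trivial, since the $1+1$ inequality is immediate.

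\emph{Where care is needed.} The only point needing attention is the bookkeeping of zero coefficients. A vector on $2k$ points whose support is smaller is really a vector on a proper subset, and patterns are defined after removing zeros. Since transport multiplies each off-$p$ coefficient by $d(x,p)^q>0$, the support away from $p$ is unchanged, so the pattern count above is exact.

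Consequently no separate argument for balanced patterns is needed: for Ptolemaic metrics on $2k$ points, the $k+k$ inequalities follow from the $(k-1)+(k+1)$ inequalities at the same exponent.
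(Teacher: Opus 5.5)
Your proposal is correct and is essentially the paper's argument. The corollary comes from observing that Ptolemaic metrics on $2k$ points form an inversion-closed class (\cref{lem:ptolemy-inversion}, with the point set and distinctness preserved) and then applying \cref{thm:balanced-reduction}, whose proof is the same transport-at-a-point-with-$c_p\Phi_c(p)>0$ argument you reproduce; your remarks on sign reversal, full support, and the trivial case $k=1$ are accurate bookkeeping that the paper leaves implicit.
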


\begin{corollary}[Parity reduction at fixed support]\label{cor:parity-support}
Fix a total support size and an exponent in an inversion-closed metric class.
If every sign pattern whose smaller side has odd cardinality is valid, then
every sign pattern is valid except possibly an even near-balanced pattern
$k+(k+1)$.  More precisely, an even pattern $a+b$ with $a\le b$ follows
from odd patterns whenever $b-a\ge2$ or $a=b$.
\end{corollary}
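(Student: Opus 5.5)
The plan is to use \cref{thm:neighbor-reduction} together with \cref{thm:balanced-reduction} and argue by induction on how far a pattern is from the odd-smaller-side patterns. Fix the total support size $m$, the exponent $q$ and the inversion-closed class $\mathcal C$. Write each sign pattern as $a+b$ with $a\le b$; reversing the sign of the coefficient vector does not change the quadratic form, so $a+b$ and $b+a$ are valid together. By hypothesis every pattern with $a$ odd is valid. It remains to treat even $a\ge2$, since the pattern $0+m$ does not occur for a nonzero zero-sum vector.

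First the balanced case $a=b=k$ with $k$ even. The neighbours $(k-1)+(k+1)$ and $(k+1)+(k-1)$ both have smaller side $k-1$, which is odd, so they are valid by hypothesis. \cref{thm:balanced-reduction} then gives $k+k$.

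Next take $a$ even with $b-a\ge2$. Apply \cref{thm:neighbor-reduction} to $a+b$; this needs $a,b\ge2$, which holds. The two neighbours are $(a-1)+(b+1)$ and $(a+1)+(b-1)$. In the first, the smaller side is $a-1$, which is odd. In the second, $a+1\le b-1$ because $b-a\ge2$, so the smaller side is $a+1$, also odd. Both neighbours are valid by hypothesis, so $a+b$ is valid. If $a-1=0$, the first neighbour is $0+m$, which is vacuous: a zero-sum vector with all nonzero entries of one sign is zero, so that pattern holds trivially. This case cannot actually arise, since $a$ is even and at least $2$.

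Finally, the only even patterns left are those with $b-a=1$, that is $k+(k+1)$ with $k$ even. Here the neighbour $(k+1)+k$ is just a reversal of the pattern itself, so the reduction gives nothing, and this matches the stated exception. The main point of care is the bookkeeping in the $b-a\ge2$ case: one must check that after the shift by one the smaller side of each neighbour is still the shifted index. The rest is direct.
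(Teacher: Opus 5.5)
Your proposal is correct and follows the paper's proof. For even $a$ with $b-a\ge2$, both neighbours $(a-1)+(b+1)$ and $(a+1)+(b-1)$ have odd smaller side, so \cref{thm:neighbor-reduction} applies; the balanced case goes through \cref{thm:balanced-reduction} via the odd pattern $(a-1)+(a+1)$; and $b=a+1$ is left over because one neighbour is the reversal of the pattern itself (calling it an ``induction'' is unnecessary, since one application of the reduction suffices in every case).
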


\begin{proof}
If $a$ is even and $b-a\ge2$, both adjacent patterns
$(a-1)+(b+1)$ and $(a+1)+(b-1)$ have odd smaller side, so
\cref{thm:neighbor-reduction} applies.  If $a=b$, the balanced reduction
uses the odd adjacent pattern $(a-1)+(a+1)$.  When $b=a+1$, one adjacent pattern is the original pattern with its signs
reversed, so this argument gives no further reduction. This is the only case
not covered by the preceding reductions.
\end{proof}

\section{The six-point theorem}

We now apply the inversion reduction at exponent one.  The one-anchor and
two-anchor inequalities proved above give the only adjacent support patterns
that are needed.

\begin{theorem}\label{thm:P6}
Every six-point Ptolemaic metric has $1$-negative type, and the exponent is
sharp.  Hence
\[
 \boxed{P(6)=1.}
\]
\end{theorem}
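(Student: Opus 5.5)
We work at $q=1$ and, for a nonzero zero-sum vector $z$ on a six-point Ptolemaic metric, we split by sign pattern after removing zero coefficients. We replace $z$ by $-z$ when needed, so that the smaller side has size one, two, or three.

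Supports smaller than six points are handled first. Every four- or five-point subset is a Ptolemaic metric on fewer points. On five points, $P(5)=\log_2(9/4)>1$ by \cref{thm:five-point}. On four or fewer points, the same conclusion follows from the one- and two-anchor results, or from \cref{thm:five-point} applied to subsets after zero coefficients are removed. Snowflake monotonicity then gives validity at $q=1$. Since zero coefficients restrict the form to subsets, only full-support patterns on six points remain: $1+5$, $2+4$, and $3+3$.

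\begin{itemize}
\item \emph{Pattern $1+5$.} \cref{prop:one-anchor-subunit} applies directly at $q=1$. Alternatively, \cref{thm:one-anchor} with $r=5$ applies, since $h_1=0\le 1/2$.
\item \emph{Pattern $2+4$.} \cref{thm:two-anchor} with $r=4$ applies. At $q=1$ we have $\kappa_1=(4-3)/3=1/3=1/(r-1)$, so condition \eqref{eq:kappa-condition} holds with equality. Note that $1\in[1,\log_2 3]$.
\item \emph{Pattern $3+3$.} The class of Ptolemaic metrics on exactly six points is closed under metric inversion by \cref{lem:ptolemy-inversion}, since inversion preserves the point set and distinctness. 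Hence \cref{cor:balanced-redundant} (equivalently \cref{thm:balanced-reduction} with $k=3$) reduces the balanced case to the adjacent patterns $2+4$ and $4+2$, which are reversals of each other and are already proved.
\end{itemize}

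The mechanism behind the $3+3$ case is as follows. A hypothetical counterexample has $\Qform_1(d;c)>0$. By \eqref{eq:potential-sum}, some index $p$ has $c_p\Phi_c(p)>0$. Transporting at $p$ via \cref{thm:transport} then gives a $2+4$ or $4+2$ counterexample on the inverted metric, which is again a six-point Ptolemaic metric. This contradicts the previous item.

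For sharpness, use $\CS(2,4)$, which is Ptolemaic. Put weights $1/2,1/2$ on the clique part and $1/4$ on each of the four independent vertices. By \cref{cor:split-two} with $r=4$, inequality \eqref{eq:generalized-roundness} reads
\[
\frac14+\frac38\,2^q\le 1,
\]
which is equivalent to $2^q\le 2$, that is, $q\le 1$. Thus no exponent above $1$ is universal, and $P(6)=1$. By \eqref{eq:correspondence}, this gives $q(7)=1$.

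The main obstacle is the endpoint $2+4$ inequality at $\kappa_1=1/3$. It rests entirely on the sharp lower bound $-\kappa_q$ of \cref{prop:transverse} at $q=1$, hence on \cref{thm:geodesic-section}, which holds throughout $1\le q\le\log_2 3$. The remaining bookkeeping check is that inversion keeps the metric on six distinct points with all coefficients nonzero after transport. This holds because $(T_pc)_x=c_x d(x,p)^q\ne0$ for $x\ne p$, and $(T_pc)_p=-\Phi_c(p)\neq 0$ by the choice of $p$.
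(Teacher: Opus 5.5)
Your proof is correct and follows the paper's argument: \cref{thm:one-anchor} for $1+r$, \cref{thm:two-anchor} at the endpoint $\kappa_1=1/3$ for $2+4$, and the inversion transport that turns a $3+3$ counterexample into a $2+4$ one. The differences are cosmetic: you certify sharpness with $\CS(2,4)$ instead of the paper's balanced $\CS(3,3)$ (both reduce to $2^q\le 2$), and your detour through $P(5)$ and snowflake monotonicity for smaller supports is unnecessary, since the anchor theorems apply directly at $q=1$ to every $1+r$ pattern with $r\le 5$ and every $2+r$ pattern with $r\le 4$.
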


\begin{proof}
Let $c$ be a nonzero zero-sum coefficient vector.  Discard zero
coefficients and reverse all signs if necessary. The $1+1$ case is
immediate, and every remaining smaller sign support has size one, two,
or three.

For a $1+r$ pattern with $2\le r\le5$, \cref{thm:one-anchor} applies at
$q=1$.  For a $2+r$ pattern with $2\le r\le4$,
\cref{thm:two-anchor} applies; in the limiting case $r=4$ one has
\[
 \kappa_1=\frac13=\frac1{4-1}.
\]
The only remaining pattern is $3+3$.  By
\cref{thm:balanced-reduction}, a $3+3$ counterexample would produce a
$2+4$ counterexample after metric inversion, which has already been
excluded.  Thus every zero-sum quadratic form is nonpositive.

For sharpness, take the complete split metric $\CS(3,3)$. Distances
within the three-vertex clique and between the two parts are $1$, while
distances between distinct vertices of the independent part are $2$.
Give each clique vertex coefficient $1/3$ and each independent vertex
coefficient $-1/3$.
The negative-type inequality becomes
\[
 \frac13+\frac13\,2^q\le1,
\]
which is equivalent to $q\le1$.
\end{proof}

\begin{corollary}\label{cor:q7}
With the notation of \cite{BHKL},
\[
 \boxed{q(7)=1.}
\]
\end{corollary}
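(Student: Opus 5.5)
The corollary follows by the same route as \cref{cor:q6}. I will combine the correspondence \eqref{eq:correspondence}, i.e.\ Proposition~7.5 of \cite{BHKL}, with \cref{thm:P6}. Taking $n=7$ in $q(n)=P(n-1)$ gives $q(7)=P(6)$. \Cref{thm:P6} states that every six-point Ptolemaic metric has $1$-negative type, and that $\CS(3,3)$ fails $q$-negative type for every $q>1$. Hence $P(6)=1$, and so $q(7)=1$.

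Two small points need checking. First, the supremum in \eqref{eq:P-definition} is over metrics on exactly six points, and \cref{thm:P6} covers exactly this class. Its proof already handles vectors with zero coefficients, by discarding them and treating every smaller sign support. Second, the sharpness example $\CS(3,3)$ is genuinely Ptolemaic with six distinct points, which follows from the remark after the definition of $\CS(p,r)$. So the supremum equals $1$ and is attained.

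Finally, I will record that this agrees with \eqref{eq:BHKL-conjecture} for odd $n=7$, since $\log_2(8/4)=1$. There is no real obstacle here: all the substance lies in \cref{thm:P6}, and in particular in the inversion reduction of the $3+3$ pattern to the $2+4$ pattern handled by \cref{thm:two-anchor} at $\kappa_1=1/3$. The corollary itself is a one-line substitution.
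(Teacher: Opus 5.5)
Your proposal is correct and is the paper's argument: \cref{thm:P6} combined with the correspondence \eqref{eq:correspondence} at $n=7$. Your extra checks are consistent with the paper. These are that the supremum in \eqref{eq:P-definition} is attained at $q=1$, that $\CS(3,3)$ is Ptolemaic, and that the value agrees with \eqref{eq:BHKL-conjecture}.
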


\begin{proof}
Combine \cref{thm:P6} with \eqref{eq:correspondence}.
\end{proof}

\section{Further remarks}

The coefficient transport gives a reduction between sign patterns in every
inversion-closed metric class. In particular, on $2k$ points the balanced
$k+k$ pattern follows from the adjacent $(k-1)+(k+1)$ pattern. This explains why the balanced and adjacent complete-split
obstructions have the same exponent in the odd-$n$ part of \eqref{eq:BHKL-conjecture}.

\Cref{prop:one-anchor-subunit} proves every $1+r$ sign pattern for
$0<q\le1$. Consequently, the conjectural seven-point metric statement
\[
 P(7)=2\log_2\frac43
\]
reduces to the $3+4$ pattern: once that pattern is known, the $2+5$
pattern follows from \cref{thm:neighbor-reduction} using $1+6$ and
$3+4$. Similarly, the conjectural eight-point statement
\[
 P(8)=\log_2\frac53
\]
reduces to the $3+5$ pattern. The $2+6$ pattern then follows from $1+7$
and $3+5$, while the $4+4$ pattern follows from
\cref{thm:balanced-reduction}. These cases require an analogue of the four-point correlation estimate
for sign patterns with three positive coefficients.

\appendix
\section{Proof of the geodesic partial-correlation theorem}\label{app:geodesic}

We prove \cref{thm:geodesic-section} by reducing the feasible
geodesic chamber to its triangle and Ptolemy boundary faces. The exact
Bernstein certificate appears in \cref{thm:cube}.

For $1\le q\le2$, define
\[
 \eta_q(t)=\frac{(1+t)^q-1-t^q}{t^{q/2}},\qquad t>0.
\]
The proof of \cref{lem:two-summand} shows
\begin{equation}\label{eq:eta-global}
 0\le\eta_q(t)\le h_q=2^q-2,
 \qquad
 \eta_q(t)=\eta_q(t^{-1}).
\end{equation}
For $1<q<2$, equality in the upper bound holds only at $t=1$.

\subsection{The double-geodesic face and the projective star formula}

\begin{proposition}[Parametrization of the crossing Ptolemy face]\label{prop:mismatch}
Let \(0<\alpha,\theta<1\). Define a metric on \(\{A,B,P,O\}\) by
\[
 \begin{gathered}
 d(A,B)=1,
 \quad d(A,P)=\alpha,
 \quad d(P,B)=1-\alpha,\\
 d(A,O)=\theta,
 \quad d(O,B)=1-\theta,\\
 d(P,O)=\alpha+\theta-2\alpha\theta.
 \end{gathered}
\]
Then \(P\) and \(O\) lie on geodesics from \(A\) to \(B\), and
\[
 d(A,B)d(P,O)
 =d(A,P)d(B,O)+d(A,O)d(P,B).
\]
Conversely, every metric on four distinct points satisfying these three
equalities is a positive scalar multiple of a metric of this form.

Metric inversion at \(A\) gives a three-leaf star centered at \(B\), with branch lengths
\begin{equation}\label{eq:star-branches}
 1,
 \qquad
 p=\frac{1-\alpha}{\alpha},
 \qquad
 o=\frac{1-\theta}{\theta}.
\end{equation}
\end{proposition}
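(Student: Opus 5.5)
The proof is a direct verification in three parts: the forward direction, the converse, and the inversion computation.

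\emph{Forward direction.} The two geodesic equalities $d(A,P)+d(P,B)=1=d(A,B)$ and $d(A,O)+d(O,B)=1$ hold by construction. For the Ptolemy equality, compute
\[
 d(A,P)d(B,O)+d(A,O)d(P,B)=\alpha(1-\theta)+\theta(1-\alpha)=\alpha+\theta-2\alpha\theta,
\]
which is $d(A,B)d(P,O)$. It remains to check that the six values define a metric with all distances positive. The value $d(P,O)$ is positive because $\alpha+\theta-2\alpha\theta=\alpha(1-\theta)+\theta(1-\alpha)>0$. The triangles $ABP$ and $ABO$ are degenerate equalities, so only triangles involving the edge $PO$ need checking.
\begin{itemize}
 \item $d(P,O)\le d(A,P)+d(A,O)=\alpha+\theta$, since the difference is $2\alpha\theta\ge0$.
 \item $d(P,O)\le d(P,B)+d(O,B)=2-\alpha-\theta$, since the difference is $2(1-\alpha)(1-\theta)\ge0$.
 \item $|d(A,P)-d(A,O)|=|\alpha-\theta|\le\alpha+\theta-2\alpha\theta$; this reduces to $\min(\alpha,\theta)\ge\alpha\theta$, which holds.
 \item The bound $|d(P,B)-d(O,B)|\le d(P,O)$ is the same inequality.
\end{itemize}
Ptolemaicity of the four-point space follows from Ptolemy's inequality holding on these configurations; I will check it by confirming that the inverted metric computed below is a tree metric, hence Ptolemaic, and then applying \cref{lem:ptolemy-inversion} in reverse, since inversion is involutive up to the standard scaling.

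\emph{Converse.} Scale so that $d(A,B)=1$, and set $\alpha=d(A,P)$ and $\theta=d(A,O)$. The geodesic equalities force $d(P,B)=1-\alpha$ and $d(O,B)=1-\theta$. Distinctness of the points gives $0<\alpha,\theta<1$. The Ptolemy equality then forces $d(P,O)=\alpha(1-\theta)+\theta(1-\alpha)$, which is the stated form.

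\emph{Inversion.} Apply \eqref{eq:inversion} at $A$. The distances from $A$ disappear, and the remaining points $B,P,O$ receive the following distances.
\begin{itemize}
 \item $d'(B,P)=(1-\alpha)/\alpha=p$.
 \item $d'(B,O)=(1-\theta)/\theta=o$.
 \item $d'(P,O)=(\alpha+\theta-2\alpha\theta)/(\alpha\theta)=(1-\alpha)/\alpha+(1-\theta)/\theta=p+o$.
\end{itemize}
The inverted space also contains the new distances $d'(A,B)=1$, $d'(A,P)=1/\alpha=1+p$, and $d'(A,O)=1+o$. Every pairwise distance is therefore the sum of branch lengths through $B$, so the inverted metric is the star centered at $B$ with leaves $A,P,O$ and branches $1,p,o$. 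A star is a tree metric, hence Ptolemaic.

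The only step requiring care is the Ptolemaic check in the forward direction. Routing it through the inversion to the star avoids a case-by-case verification of the three Ptolemy inequalities.
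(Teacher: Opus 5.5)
Your proposal is correct and follows essentially the paper's route: direct substitution for the geodesic and Ptolemy identities, normalizing $d(A,B)=1$ and solving for the remaining distances in the converse, and computing the inversion at $A$ explicitly to obtain the star. You also check the triangle inequalities and Ptolemaicity, which the paper leaves implicit, though the detour through the star for Ptolemaicity is unnecessary (inversion is in fact exactly involutive, with no rescaling): once one of the three nonnegative opposite-distance products equals the sum of the other two, the remaining two Ptolemy inequalities hold automatically.
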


\begin{proof}
The metric and Ptolemy identities follow by direct substitution. For the converse, scale \(d(A,B)\) to one. Under inversion at \(A\), the distances from \(B\) to \(A,P,O\) are the three numbers in \eqref{eq:star-branches}, while the leaf-to-leaf distances are their pairwise sums.
\end{proof}

For $X,Y,Z\in\R$, write
\begin{equation}\label{eq:heron-polynomial}
 \Her(X,Y,Z)=2XY+2YZ+2ZX-X^2-Y^2-Z^2.
\end{equation}
This is the Heron polynomial.

\begin{theorem}[Symmetric projective star formula]\label{thm:star-formula}
Let a star have center $B_0$ and leaves $A_0,P,O$, with positive
branch lengths $a,b,c$, respectively. Invert the metric at $A_0$, and
let $\chi_q(a,b,c)$ be the partial correlation of $A_0$ and $B_0$ based
at $P$ and conditioned on $O$. Put
\[
 A=a^q,
 \quad B=b^q,
 \quad C=c^q,
\]
\[
 U=(a+b)^q,
 \quad V=(a+c)^q,
 \quad W=(b+c)^q.
\]
Define
\[
 \mathcal H_0=\Her(U,V,W),
 \qquad
 \mathcal H_1=\Her(AW,BV,CU),
\]
and
\begin{align*}
 \mathcal N={}&AW(U+V-W)+BV(U+W-V)\\
 &+CU(V+W-U)-2UVW.
\end{align*}
Then
\begin{equation}\label{eq:star-correlation}
 \chi_q(a,b,c)
 =\frac{\mathcal N}{\sqrt{\mathcal H_0\mathcal H_1}}.
\end{equation}
This expression is homogeneous of degree zero and invariant under every permutation of \(a,b,c\).
\end{theorem}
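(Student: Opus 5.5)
The plan is to compute the partial correlation directly from the $3\times3$ Schoenberg Gram matrix of the inverted metric, based at $P$ and indexed by $A_0,B_0,O$. By \cref{def:section-correlation},
\[
 \chi_q=\frac{G_{A_0B_0}G_{OO}-G_{A_0O}G_{OB_0}}{\sqrt{(G_{A_0A_0}G_{OO}-G_{A_0O}^2)(G_{B_0B_0}G_{OO}-G_{B_0O}^2)}},
\]
since each entry of $S$ carries the common factor $G_{OO}^{-1}$, which cancels. Each diagonal minor $G_{XX}G_{OO}-G_{XO}^2$ is the Gram determinant of the triangle $P,X,O$ in the semimetric $d'^{\,q}$, where $d'$ is the inverted metric. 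By the classical Heron identity, this determinant equals $\tfrac14\Her(x,y,z)$, where $x,y,z$ are the $q$-th powers of the three sides.

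First I record the inverted distances from \eqref{eq:inversion}. In the star, $d(A_0,B_0)=a$, $d(A_0,P)=a+b$, $d(A_0,O)=a+c$, $d(P,O)=b+c$, $d(B_0,P)=b$ and $d(B_0,O)=c$. After inversion at $A_0$:
\[
 d'(A_0,B_0)=\tfrac1a,\qquad d'(A_0,P)=\tfrac1{a+b},\qquad d'(A_0,O)=\tfrac1{a+c},
\]
\[
 d'(B_0,P)=\tfrac{b}{a(a+b)},\qquad d'(B_0,O)=\tfrac{c}{a(a+c)},\qquad d'(P,O)=\tfrac{b+c}{(a+b)(a+c)}.
\]
Raising to the power $q$ gives the side values in terms of $A,B,C,U,V,W$.

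For the triangle $P,A_0,O$ the side values are $1/U$, $1/V$ and $W/(UV)$. Multiplying by $UV$ turns them into $V,U,W$, and $\Her$ is homogeneous of degree $2$, so this minor equals $\tfrac14\mathcal H_0/(UV)^2$. For the triangle $P,B_0,O$ the side values are $B/(AU)$, $C/(AV)$ and $W/(UV)$. Multiplying by $AUV$ gives $BV$, $CU$, $AW$, so this minor equals $\tfrac14\mathcal H_1/(AUV)^2$. The denominator of $\chi_q$ is therefore $\tfrac14\sqrt{\mathcal H_0\mathcal H_1}/(AU^2V^2)$.

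Next I expand the mixed minor, writing $G_{XY}=\tfrac12\bigl(d'(P,X)^q+d'(P,Y)^q-d'(X,Y)^q\bigr)$ in terms of the same quantities. After clearing the common denominator $4AU^2V^2$, I expect it to reduce to $\mathcal N$. This gives \eqref{eq:star-correlation}, with the positive square root since $q<2$ keeps both triangles nondegenerate.

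The main obstacle is this numerator verification. It is a routine but error-prone polynomial identity, and I would organize it by grouping the terms by the monomials $AW$, $BV$, $CU$, and $UVW$. As a sanity check, $\CS$-type or equal-branch values should reproduce known correlations, for example $-1/2$ at $q=\qstar$.

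Homogeneity follows by scaling $a,b,c$ by $t$, which multiplies $A,\dots,W$ by $t^q$. The numerator $\mathcal N$ has degree $3$, while $\mathcal H_0$ and $\mathcal H_1$ have degrees $2$ and $4$. Hence $\sqrt{\mathcal H_0\mathcal H_1}$ has degree $3$ and the quotient has degree $0$.

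For the permutation symmetry, a permutation of $a,b,c$ permutes $\{A,B,C\}$ and $\{U,V,W\}$ compatibly. The pairs $(A,W)$, $(B,V)$ and $(C,U)$ correspond to a branch and the sum of the opposite two branches, and these pairs are permuted among themselves. $\mathcal H_0$ is symmetric in $U,V,W$. $\mathcal H_1$ is symmetric in its three arguments, which are exactly the pair products. Each summand of $\mathcal N$ pairs one such product with the factor $(\text{sum of the other two of }U,V,W)-(\text{its own})$, and the term $UVW$ is symmetric, so $\mathcal N$ is invariant as well. This symmetry is not evident geometrically, because the four roles $A_0,B_0,P,O$ are asymmetric; it is a consequence of the formula.
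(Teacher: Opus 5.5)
Your proposal is correct and follows the paper's route: you compute the powered inverted distances, form the Schoenberg matrix at $P$, and condition on $O$. Your $2\times2$ minors are just $G_{OO}$ times the paper's Schur entries $S_{A_0A_0}=\mathcal H_0/(4UVW)$, $S_{B_0B_0}=\mathcal H_1/(4A^2UVW)$, $S_{A_0B_0}=\mathcal N/(4AUVW)$, and the numerator check you deferred does go through, since $2VW(A+B-U)-(V+W-U)(AW+BV-CU)$ expands exactly to $\mathcal N$.
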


\begin{proof}
In the inverted metric, the powered distances needed for the Schoenberg
matrix are
\[
 d(P,A_0)^q=\frac1U,
 \quad d(P,B_0)^q=\frac{B}{AU},
 \quad d(P,O)^q=\frac{W}{UV},
\]
\[
 d(A_0,B_0)^q=\frac1A,
 \quad d(A_0,O)^q=\frac1V,
 \quad d(B_0,O)^q=\frac{C}{AV}.
\]
Forming the Schoenberg matrix based at $P$ and taking the Schur complement
of the entry indexed by $O$ gives
\[
 S_{A_0A_0}=\frac{\mathcal H_0}{4UVW},
 \qquad
 S_{B_0B_0}=\frac{\mathcal H_1}{4A^2UVW},
 \qquad
 S_{A_0B_0}=\frac{\mathcal N}{4AUVW}.
\]
Their normalized off-diagonal entry is \eqref{eq:star-correlation}.
The resulting expression is homogeneous of degree zero and is unchanged
by a simultaneous permutation of the branch powers and the opposite
pair-sums.
\end{proof}

\subsection{An exact Bernstein cube theorem}

By \cref{thm:star-formula}, order and scale the branches so that
\[
 a=1,
 \qquad
 0<p\leq o\leq1,
\]
where \(p=b/a\) and \(o=c/a\). Set
\begin{equation}\label{eq:xy-chart}
 x=o^{q/2},
 \qquad
 y=(p/o)^{q/2}.
\end{equation}
Then \(x,y\in(0,1]\) and \(p^{q/2}=xy\).

For \(1<q\leq\log_2 3\), set
\begin{equation}\label{eq:defect-coordinates}
 h=h_q,
 \qquad
 e=\frac{\eta_q(o)}h,
 \qquad
 f=\frac{\eta_q(p/o)}h,
 \qquad
 d=\frac{\eta_q(p)}h.
\end{equation}
For \(q=1\), use \(h=0\) and \(e=f=d=0\). \Cref{lem:two-summand} gives
\[
 x,y,h,e,f,d\in[0,1].
\]

For independent variables in this cube, define
\begin{align}
 A_x&=1+x^2+hex,\label{eq:Ax}\\
 A_y&=1+y^2+hfy,\label{eq:Ay}\\
 P&=2x+he+y(xhf-hd),\label{eq:P}\\
 Q&=2y+hf+x(yhe-hd),\label{eq:Q}
\end{align}
and
\begin{align}
 H_x&=4A_xA_y-P^2,\label{eq:Hx}\\
 H_y&=4A_xA_y-Q^2,\label{eq:Hy}\\
 N&=PQ+2hdA_xA_y,\label{eq:N}\\
 \Delta&=\left(\frac{1+2h}{3}\right)^2H_xH_y-N^2.\label{eq:Delta}
\end{align}

\begin{theorem}[Bernstein cube inequality]\label{thm:cube}
For every \((x,y,h,e,f,d)\in[0,1]^6\),
\[
 H_x\geq3,
 \qquad
 H_y\geq3,
 \qquad
 N\geq0,
 \qquad
 \Delta\geq0.
\]
Consequently,
\begin{equation}\label{eq:cube-correlation}
 -\frac{1+2h}{3}
 \leq
 -\frac{N}{\sqrt{H_xH_y}}
 \leq0.
\end{equation}
\end{theorem}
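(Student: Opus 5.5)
The inequalities in \cref{thm:cube} are polynomial inequalities on the unit cube $[0,1]^6$, so I would certify them by expanding each polynomial in the tensor-product Bernstein basis and checking that all coefficients are nonnegative. This is the certificate the paper announces.

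The easy items come first. For $N\ge0$, note that $P$ and $Q$ are not obviously nonnegative, because of the term $-hdy$ in $P$ and $-hdx$ in $Q$. Still, $P\ge 2x-hdy+he+xyhf$. So I would first bound $P,Q$ from below, or simply expand $N=PQ+2hdA_xA_y$ directly. The $2hdA_xA_y\ge 2hd$ term should compensate the negative cross terms $-hd(2x\cdot x+2y\cdot y)$-type contributions. A low-degree Bernstein expansion in $(x,y,h,e,f,d)$ should then have nonnegative coefficients, and I expect this to be checkable by hand.

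Next, for $H_x\ge3$, write $H_x-3=4A_xA_y-P^2-3$. At $h=0$ this is $4(1+x^2)(1+y^2)-4x^2-3=1+4y^2+4x^2y^2\ge1$. The $h$-terms enter with degree at most two in $h$, and I would check nonnegativity of the Bernstein coefficients of $H_x-3$; $H_y$ is symmetric. Once $H_x,H_y>0$ and $N\ge0$, the consequence \eqref{eq:cube-correlation} follows immediately from $\Delta\ge0$: indeed $N^2\le((1+2h)/3)^2H_xH_y$ gives $N/\sqrt{H_xH_y}\le(1+2h)/3$.

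The main obstacle is $\Delta\ge0$. Its degree in each variable is moderate (up to about $8$ in $x,y$, and higher in $h$), so the tensor-product Bernstein expansion has many coefficients, consistent with the $24{,}305$ quoted. Moreover $\Delta$ vanishes at the sharp configuration. By the sharpness of $-\kappa_q$ on the balanced crossing/complete-split configuration, this should be $x=y=1$, $e=f=d=1$, where $P=Q=2+h$ after simplification. There plain Bernstein coefficients touch zero at a vertex, which is fine, but near-zero coefficients on faces may become negative. My first attempt is the direct expansion with exact rational arithmetic. If some coefficients are negative, I would subdivide the cube (de Casteljau splitting, mainly in $x,y$ near $1$) until all pieces have nonnegative coefficients. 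If that fails, I would first factor out the vanishing structure: write $\Delta$ at $x=y=1$ as a sum of visibly nonnegative terms in $(1-e),(1-f),(1-d)$, and then certify the remainder. The generated coefficient list would be verified independently by reconstructing the polynomial from the Bernstein coefficients.
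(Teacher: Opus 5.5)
Your proposal matches the paper's proof: an exact rational tensor-product Bernstein expansion of $H_x$, $H_y$, $N$ and $\Delta$ on $[0,1]^6$, with \eqref{eq:cube-correlation} then read off from $H_x,H_y>0$, $N\ge0$, $\Delta\ge0$ as you describe. The paper needs no subdivision or factoring: at the native multi-degrees $(2,2,2,2,2,2)$, $(2,2,3,2,2,2)$ and $(4,4,6,4,4,4)$, all $24{,}305$ coefficients are nonnegative, and those of $H_x,H_y$ are at least $3$, even though $\Delta$ vanishes identically on the faces $\{h=0,\ x=y=1\}$ and $\{h=1,\ e=f=d=1\}$ rather than only at a vertex.
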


\begin{proof}
Use the tensor-product Bernstein basis on \([0,1]^6\). For a multi-degree \(\mathbf n=(n_1,\ldots,n_6)\), write
\[
 B_{\mathbf i}^{\mathbf n}(\mathbf t)
 =\prod_{j=1}^6
 \binom{n_j}{i_j}t_j^{i_j}(1-t_j)^{n_j-i_j}.
\]
If
\[
 F(\mathbf t)=\sum_{\boldsymbol\alpha}c_{\boldsymbol\alpha}
 \mathbf t^{\boldsymbol\alpha},
\]
then its Bernstein coefficient at \(\mathbf i\) is
\begin{equation}\label{eq:bernstein-conversion}
 b_{\mathbf i}
 =\sum_{\boldsymbol\alpha\leq\mathbf i}
 c_{\boldsymbol\alpha}
 \prod_{j=1}^6
 \frac{\binom{i_j}{\alpha_j}}
 {\binom{n_j}{\alpha_j}}.
\end{equation}
Every basis function is nonnegative, and the basis functions sum to one.

The exact coefficient calculation gives the following data.
\begin{center}
\small
\begin{tabular}{@{}lccccc@{}}
\toprule
Polynomial & Multi-degree & \# coeffs. & Zero & Least positive & Largest\\
\midrule
\(H_x\) & \((2,2,2,2,2,2)\) & 729 & 0 & \(3\) & \(27\)\\
\(H_y\) & \((2,2,2,2,2,2)\) & 729 & 0 & \(3\) & \(27\)\\
\(N\) & \((2,2,3,2,2,2)\) & 972 & 190 & \(1/12\) & \(27\)\\
\(\Delta\) & \((4,4,6,4,4,4)\) & \(21{,}875\) & 415 & \(4/9\) & \(11{,}807/54\)\\
\bottomrule
\end{tabular}
\end{center}
All \(24{,}305\) coefficients are rational and nonnegative. Every
coefficient of \(H_x\) and \(H_y\) is at least $3$. Since the Bernstein
basis functions are nonnegative and sum to one,
\eqref{eq:bernstein-conversion} proves the stated bounds. The ancillary
files contain the complete coefficient list and an independent exact
reconstruction verifier.
\end{proof}

\begin{lemma}[Metric specialization]\label{lem:specialization}
Under \eqref{eq:xy-chart} and \eqref{eq:defect-coordinates}, the quantities in \cref{thm:star-formula} satisfy
\[
 \mathcal H_0=x^2H_x,
 \qquad
 \mathcal H_1=x^4y^2H_y,
 \qquad
 \mathcal N=-x^3yN.
\]
Hence
\begin{equation}\label{eq:chi-cube}
 \chi_q(a,b,c)=-\frac{N}{\sqrt{H_xH_y}}.
\end{equation}
\end{lemma}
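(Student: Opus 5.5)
The plan is to verify the three identities by direct substitution, after expressing every quantity in \cref{thm:star-formula} through the chart variables. By homogeneity and permutation invariance we may take $a=1$, $b=p$, $c=o$ with $0<p\le o\le1$. Then
\[
 A=1,\qquad B=p^q=x^2y^2,\qquad C=o^q=x^2 .
\]
The pair sums come from the definition of $\eta_q$, which gives $(1+t)^q=1+t^q+t^{q/2}\eta_q(t)$.
\begin{itemize}
\item With $t=o$: $V=(1+o)^q=1+x^2+hex=A_x$.
\item With $t=p$: $U=(1+p)^q=1+x^2y^2+hd\,xy$.
\item Writing $b+c=o(1+p/o)$ and using $t=p/o$: $W=o^q(1+p/o)^q=x^2\bigl(1+y^2+hfy\bigr)=x^2A_y$.
\end{itemize}
So $V$ and $W$ are already expressed through $A_x$ and $A_y$, while $U$ is a separate quadratic in $xy$.

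Next we substitute into $\mathcal H_0=\Her(U,V,W)$. The Heron polynomial factors as
\[
 \Her(U,V,W)=4VW-(V+W-U)^2 .
\]
With $V=A_x$ and $W=x^2A_y$ this reads $4x^2A_xA_y-(V+W-U)^2$. A short expansion gives
\[
 V+W-U=1+x^2+hex+x^2+x^2y^2+hfx^2y-1-x^2y^2-hdxy=x\,P,
\]
with $P$ as in \eqref{eq:P}. Hence $\mathcal H_0=x^2(4A_xA_y-P^2)=x^2H_x$, as claimed.

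The same factorization handles $\mathcal H_1=\Her(AW,BV,CU)$, now grouped around the pair $(AW,CU)$:
\[
 \mathcal H_1=4\,AW\cdot BV-(AW+BV-CU)^2 .
\]
First, $AW\cdot BV=x^2A_y\cdot x^2y^2A_x=x^4y^2A_xA_y$. Second, $AW+BV-CU=x^2A_y+x^2y^2A_x-x^2U$, and expanding this should give $x^2y\,Q$ with $Q$ as in \eqref{eq:Q}. Together these give $\mathcal H_1=x^4y^2H_y$.

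For $\mathcal N$ we introduce $s=V+W-U=xP$ and $t=AW+BV-CU=x^2yQ$. The aim is to rewrite $\mathcal N$ as a combination of $s\,t$ and $UVW$-type terms, and then to match it against $-x^3y(PQ+2hdA_xA_y)$. The key observation is that $\mathcal N$ is the off-diagonal analogue of the Heron form: it is the polarization making $\mathcal H_0\mathcal H_1-\mathcal N^2$ a Gram determinant. Because of this, I expect an identity of the form
\[
 \mathcal N=-\tfrac12\bigl(s\,t-\text{(correction)}\bigr),
\]
and the correction should reduce to $2hd\,x^3yA_xA_y$ using $U-1-x^2y^2=hdxy$. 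I would verify this last step by expanding everything in $x,y,h,e,f,d$. It is a routine polynomial identity, but it is the messiest step and the place where sign or factor errors are most likely, so I would check it carefully, and symbolically if possible.

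Finally, \eqref{eq:chi-cube} follows from \eqref{eq:star-correlation}:
\[
 \chi_q=\frac{\mathcal N}{\sqrt{\mathcal H_0\mathcal H_1}}=\frac{-x^3yN}{\sqrt{x^2H_x\cdot x^4y^2H_y}}=-\frac{N}{\sqrt{H_xH_y}},
\]
since $x,y>0$. When $q=1$ we have $h=0$ and $U=(1+p)$ exactly, so the same computation goes through.
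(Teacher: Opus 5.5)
Your approach is the same as the paper's: with $A=1$, $B=x^2y^2$, $C=x^2$, write $U=1+x^2y^2+hd\,xy$, $V=A_x$, $W=x^2A_y$, then substitute. Your computations of $\mathcal H_0$ and $\mathcal H_1$ are correct, and more explicit than the paper's ``after expansion''. They use $\Her(X,Y,Z)=4YZ-(Y+Z-X)^2$ together with $V+W-U=xP$ and $AW+BV-CU=x^2yQ$.

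The $\mathcal N$ step, however, is not actually carried out. The shape you anticipate is also wrong. You guess $\mathcal N=-\tfrac12\bigl(st-\text{correction}\bigr)$ with correction $2hd\,x^3yA_xA_y$, which is off by a factor of $2$ and by the sign of the correction. At the balanced point $x=y=h=e=f=d=1$ your form gives $9/2$, while $\mathcal N=-27$.

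No polarization heuristic or brute-force expansion is needed. With $s=V+W-U$ and $t=AW+BV-CU$, use $U+V-W=2V-s$ and $U+W-V=2W-s$ in the definition of $\mathcal N$:
\[
 \mathcal N=2AVW+2BVW-s\,(AW+BV-CU)-2UVW=-st-2VW\,(U-A-B).
\]
In the chart, $st=x^3yPQ$, $U-A-B=hd\,xy$ and $VW=x^2A_xA_y$. This gives $\mathcal N=-x^3y\bigl(PQ+2hdA_xA_y\bigr)=-x^3yN$ at once.

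With this replacement your argument is complete. The deduction of \eqref{eq:chi-cube} using $x,y>0$ and your remark on $q=1$ (where $h=e=f=d=0$) are fine.
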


\begin{proof}
The three powered pair-sums are
\[
 U=1+x^2y^2+hdxy,
 \qquad
 V=A_x,
 \qquad
 W=x^2A_y.
\]
Substitution into the definitions of \(\mathcal H_0\), \(\mathcal H_1\), and \(\mathcal N\) gives the three identities after expansion.
\end{proof}

\begin{corollary}[Equality faces of the exact certificate]\label{cor:cube-equality}
Under the metric specialization in \cref{lem:specialization}, the zero-face data of the exact Bernstein certificate have the following consequences.
\begin{enumerate}
 \item If \(0<h<1\) and \(\Delta=0\), then
 \[
  x=y=e=f=d=1.
 \]
 \item If \(h=1\) and \(\Delta=0\), then
 \[
  e=f=d=1.
 \]
\end{enumerate}
The ancillary verifier checks these face statements exactly from the stored rational Bernstein coefficients.
\end{corollary}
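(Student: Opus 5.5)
The plan is to read off both face statements from the structure of the Bernstein expansion of \(\Delta\) and the interpolation property of tensor-product Bernstein coefficients at vertices and along faces. Since every Bernstein coefficient \(b_{\mathbf i}\) of \(\Delta\) is nonnegative and every basis function \(B^{\mathbf n}_{\mathbf i}\) is strictly positive on the open cube, \(\Delta(\mathbf t)=0\) at a point \(\mathbf t\) forces \(b_{\mathbf i}B^{\mathbf n}_{\mathbf i}(\mathbf t)=0\) for every \(\mathbf i\).

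\textbf{Step 1: translate a zero of \(\Delta\) into a condition on the support.} At a point \(\mathbf t\), a basis function \(B^{\mathbf n}_{\mathbf i}\) is nonzero exactly when \(i_j=0\) for every \(j\) with \(t_j=0\) and \(i_j=n_j\) for every \(j\) with \(t_j=1\). So the basis functions that survive at \(\mathbf t\) are those indexed by the sub-box of the face of \([0,1]^6\) containing \(\mathbf t\) in its relative interior. Hence \(\Delta(\mathbf t)=0\) holds iff \(b_{\mathbf i}=0\) for every \(\mathbf i\) in that sub-box.

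\textbf{Step 2: enumerate faces.} For part (1), fix \(0<h<1\). The index \(i_3\) then ranges over all of \(\{0,\dots,6\}\), and the remaining five coordinates determine a face \(F\) of \([0,1]^5\). The claim is the following combinatorial statement about the 415 zero coefficients: the only face \(F\) for which every coefficient with \(i_3\) arbitrary and the other indices in the sub-box of \(F\) vanishes is the vertex \(x=y=e=f=d=1\), that is, \(\mathbf i=(4,4,i_3,4,4,4)\) for all \(i_3\). For part (2), \(h=1\) forces \(i_3=6\). The zero set restricted to \(i_3=6\) should contain no full sub-box except those with \(i_4=i_5=i_6=4\) forced, and \(x,y\) may be free. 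I would verify these by a finite check over the \(3^5\) (respectively \(3^5\) with \(i_3=6\)) face types, which is exactly what the verifier does.

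\textbf{Step 3: interpret.} The metric specialization of \cref{lem:specialization} places actual metrics inside this cube. The conclusions are therefore statements about when the lower bound \(-\kappa_q\) in \cref{thm:geodesic-section} can be attained. Note that \((1+2h)/3=\kappa_q\), and that \(e=f=d=1\) means \(o=p/o=p=1\) by \cref{lem:two-summand}'s strict equality, which is the balanced star.

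The main obstacle is that Step 2 is not analytic. The zero pattern of 415 coefficients must be checked exactly, and a zero of \(\Delta\) could in principle occur in the open interior of a face only if all coefficients of the corresponding sub-box vanish. I would first sanity-check by hand that \(\Delta\) vanishes at \(x=y=e=f=d=1\) for all \(h\), using \(A_x=A_y=2+h\) and \(P=Q=2+h\), to confirm that the predicted equality face is genuine before relying on the exhaustive face check.
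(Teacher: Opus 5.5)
Your proposal is correct and follows the paper's own argument. Because all Bernstein coefficients of $\Delta$ are nonnegative, $\Delta(\mathbf t)=0$ forces every coefficient in the index sub-box of the face containing $\mathbf t$ in its relative interior to vanish, so both items reduce to the exact zero-face check on the stored coefficients that the paper delegates to its verifier. Your enumeration over all $3^5$ faces proves a slightly stronger cube-wide statement than needed: on the metric locus $x,y,e,f,d\in(0,1]$, so only faces with no coordinate fixed at $0$ can occur.
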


\subsection{The sharp double-geodesic theorem}

\begin{theorem}[Crossing double-geodesic correlation]\label{thm:double-geodesic}
Let \(d\) be a Ptolemaic metric on the four distinct points
\(\{A,B,P,O\}\). Suppose that \(P\) and \(O\) lie on geodesics from
\(A\) to \(B\) and that the corresponding Ptolemy equality holds. For
\(1\leq q\leq\log_2 3\), let \(\chi_q\) be the partial correlation of
the endpoint vectors after conditioning on the other inserted point. Then
\begin{equation}\label{eq:main-bound}
 -\kappa_q\leq\chi_q<0,
 \qquad
 \kappa_q=\frac{2^{q+1}-3}{3}.
\end{equation}
The lower equality holds exactly when the three branches of the inverted star are equal.
\end{theorem}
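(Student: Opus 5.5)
Our plan is to reduce the statement to the Bernstein cube inequality (\cref{thm:cube}) through the parametrization of the crossing face and the star formula.

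First, by \cref{prop:mismatch}, after scaling $d(A,B)=1$ the configuration is determined by $0<\alpha,\theta<1$. Inverting at $A$ produces a three-leaf star centered at $B$ with branch lengths $1,p,o$ as in \eqref{eq:star-branches}. We then need to check that the partial correlation $\chi_q$ of the endpoint vectors, conditioned on the other inserted point, is exactly the quantity $\chi_q(a,b,c)$ of \cref{thm:star-formula} computed for this star. Partial correlations transform under inversion by the scaling rule used in the proof of \cref{prop:transverse}, namely $\widehat S_{XY}=S_{XY}/(R_XR_Y)$, so the normalized entry is invariant. The only bookkeeping is to match which point plays the base and which plays the conditioning role. \Cref{thm:star-formula} is invariant under all permutations of $a,b,c$, and the base/conditioning roles of $P,O$ are symmetric after exchange, since $S^{A\mid B}=S^{B\mid A}$. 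Hence we may order the branches as $a=1\ge o\ge p>0$.

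Second, we pass to the chart \eqref{eq:xy-chart}--\eqref{eq:defect-coordinates}. \Cref{lem:specialization} gives $\chi_q=-N/\sqrt{H_xH_y}$, with the six coordinates lying in $[0,1]$ by \eqref{eq:eta-global}. \Cref{thm:cube} then gives
\[
-\frac{1+2h_q}{3}\le\chi_q\le0 .
\]
Since $(1+2(2^q-2))/3=(2^{q+1}-3)/3=\kappa_q$, this is the lower bound in \eqref{eq:main-bound}.

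For the strict upper bound $\chi_q<0$, we need $N>0$ at genuine metric points, where $x,y>0$. We expect this to follow from $N=PQ+2hdA_xA_y$ together with $P\ge 2x+y\,x h f-hdy$. When $h=0$ (that is, $q=1$) we get $N=4xy>0$ directly. For $h>0$ we would try to show $P,Q>0$ using $d\le1$ and $x,y\le1$. This needs a little care; alternatively, $\mathcal N<0$ can be checked directly from its definition.

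The equality case is the delicate step. Equality $\chi_q=-\kappa_q$ means $\Delta=0$, and \cref{cor:cube-equality} gives the following. For $1<q<\log_2 3$ we get $0<h<1$, which forces $x=y=1$, hence $o=p=1$: equal branches. At $q=\log_2 3$, where $h=1$, we only get $e=f=d=1$. But by \cref{lem:two-summand}, $\eta_q(t)=h_q$ with $1<q<2$ only at $t=1$, so $e=1$ forces $o=1$ and $f=1$ forces $p=o$, again giving equal branches. At $q=1$ we have $\kappa_1=1/3$, and we would verify directly with $h=0$ that $-N/\sqrt{H_xH_y}=-4xy/\sqrt{(4A_xA_y-4x^2)(4A_xA_y-4y^2)}$ equals $-1/3$ only at $x=y=1$.

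Conversely, for equal branches a direct substitution into \eqref{eq:star-correlation} with $a=b=c$ should give exactly $-\kappa_q$. The main obstacle we expect is correctly matching the roles of $A,B,P,O$ between \cref{prop:mismatch} and \cref{thm:star-formula}. The endpoints $A,B$ must become the partial-correlation pair, with the inserted points as base and conditioning point, consistently with \cref{def:section-correlation}.
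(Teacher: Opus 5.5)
\textbf{Verdict.} Most of your proposal follows the paper's route and is correct, but the strict inequality $\chi_q<0$ for $q>1$ is not proved, and the method you sketch for it cannot work.

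\textbf{What matches the paper.} You invert at an endpoint, identify the partial correlation with the star formula, pass to the chart, and apply \cref{thm:cube} for the lower bound. For the equality case you use \cref{cor:cube-equality}, the equality clause of \cref{lem:two-summand} at $q=\log_2 3$, and an AM--GM computation at $q=1$; your $h=0$ expression is the paper's \eqref{eq:q-one-star} in chart coordinates. All of this is correct.

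\textbf{A minor point.} The identification of $\chi_q$ with $\chi_q(1,p,o)$ does not follow from the rule $\widehat S_{XY}=S_{XY}/(R_XR_Y)$. That rule describes inversion at the \emph{base point} of the Schoenberg matrix, whereas here you invert at the correlated endpoint $A$. The correct reason is simpler: inversion is an involution, so inverting the star at $A_0=A$ returns the original metric, and $\chi_q$ is invariant under scaling.

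\textbf{The gap.} \Cref{thm:cube} only gives $N\ge0$, and its certificate has zero Bernstein coefficients, so strictness must come from somewhere else. Your plan to show $P,Q>0$ from $d\le1$ and $x,y\le1$ fails because $P$ and $Q$ are not positive on the cube. For example, $x=\tfrac1{10}$, $y=h=d=1$, $e=f=0$ gives $P=-\tfrac45$, and exchanging the roles of $x,y$ (and $e,f$) does the same to $Q$. Positivity holds only on the metric locus, so you must use the relations that tie $e,f,d$ to $x,y$.

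The paper does this by undoing the chart. Put $U=(1+p)^q$, $V=(1+o)^q$, $W=(p+o)^q$.
\begin{itemize}
\item One has $P=(V+W-U)/x$, which is positive because $o\ge p$ gives $V\ge U$.
\item One has $Q=(W+p^qV-o^qU)/(po)^{q/2}$. With $t=o/p$,
\[
W+p^qV-o^qU=p^q\bigl((1+t)^q+(1+o)^q-(t+o)^q\bigr)>0,
\]
since $o\le1$ gives $(t+o)^q\le(1+t)^q$.
\end{itemize}
Then $N=PQ+2hdA_xA_y>0$.

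Your fallback, checking $\mathcal N<0$ directly from its definition, is not an alternative route. By \cref{lem:specialization}, $\mathcal N=-x^3yN$, so it is the same inequality restated, and you give no argument for it.
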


\begin{proof}
Invert at one endpoint. \Cref{prop:mismatch} gives a three-leaf
star with positive branch lengths. The projective symmetry in \cref{thm:star-formula} permits the ordered normalization used above. \Cref{thm:cube,lem:specialization} give \eqref{eq:main-bound}.

Suppose first that \(1<q<\log_2 3\). On the metric locus,
$x,y,e,f,d\in(0,1]$ and $h\in(0,1)$. By
\cref{cor:cube-equality}, equality forces
\[
 x=y=e=f=d=1.
\]
Thus $o=1$ and $p/o=1$, so the three branches are equal. At
$q=\log_2 3$, one has $h=1$, and \cref{cor:cube-equality} forces
$e=f=d=1$. Equality in \cref{lem:two-summand} then again gives equal
branch lengths.

At \(q=1\), the symmetric star formula simplifies to
\begin{equation}\label{eq:q-one-star}
 \chi_1(a,b,c)
 =-\sqrt{\frac{abc}{(a+b+c)(ab+bc+ca)}}.
\end{equation}
The inequality
\[
 (a+b+c)(ab+bc+ca)\geq9abc
\]
follows from two applications of the arithmetic-geometric mean inequality,
and equality holds exactly when \(a=b=c\). This completes the equality
analysis. Strict negativity follows from \eqref{eq:q-one-star} at \(q=1\). For
\(q>1\), write
\[
 U=(1+p)^q,\qquad V=(1+o)^q,\qquad W=(p+o)^q.
\]
Direct substitution into \eqref{eq:P}--\eqref{eq:Q} gives
\[
 P=\frac{V+W-U}{x}>0,
 \qquad
 Q=\frac{W+p^qV-o^qU}{(po)^{q/2}}>0.
\]
The first inequality uses $o\ge p$. For the second, set $t=o/p$ and note
that $t(1+p)=t+o\le t+1$, whence
\[
 W+p^qV-o^qU
 =p^q\bigl((1+t)^q+(1+o)^q-t^q(1+p)^q\bigr)>0.
\]
Thus $N=PQ+2hdA_xA_y>0$, which proves strict negativity.
\end{proof}

\begin{corollary}[Crossing boundary at the six-element constant]\label{cor:qstar-boundary}
At \(q=\qstar\), every partial correlation on the crossing
double-geodesic Ptolemy boundary satisfies
\[
 -\frac12\leq\chi_{\qstar}<0.
\]
The lower equality occurs only for the balanced three-leaf star, up to branch permutation and scaling. This equality configuration occurs in a four-point restriction of the balanced complete split metric \(\CS(2,3)\).
\end{corollary}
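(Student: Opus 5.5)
The plan is to specialize \cref{thm:double-geodesic} to $q=\qstar$ and then check the equality configuration directly. At $q=\qstar$ one has $2^{\qstar}=9/4$. Hence
\[
 \kappa_{\qstar}=\frac{2\cdot\frac94-3}{3}=\frac12 .
\]
Since $1<\qstar<\log_2 3$, \cref{thm:double-geodesic} applies verbatim on the crossing double-geodesic Ptolemy face. It gives $-\tfrac12\le\chi_{\qstar}<0$, and it characterizes lower equality by equality of the three branches of the star obtained by inverting at an endpoint. By \cref{prop:mismatch}, the configurations on this face correspond, up to scaling, to three-leaf stars with branch lengths $1,p,o$. The projective symmetry in \cref{thm:star-formula} identifies configurations up to permutation of the branches. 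So the equality case is exactly the balanced star, up to branch permutation and scaling. In the ordered chart this is $p=o=1$, i.e.\ $\alpha=\theta=\tfrac12$ in \cref{prop:mismatch}. This part is routine bookkeeping once $\kappa_{\qstar}=\tfrac12$ is recorded.

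The remaining task is to exhibit the equality configuration inside $\CS(2,3)$. Take $\alpha=\theta=\tfrac12$ in \cref{prop:mismatch}, with $d(A,B)=1$. The metric on $\{A,B,P,O\}$ is then
\[
 d(A,P)=d(P,B)=d(A,O)=d(O,B)=d(P,O)=\tfrac12 ,\qquad d(A,B)=1 .
\]
Rescale by $2$. Then $A,B$ are at distance $2$ and every other pair is at distance $1$. This is realized in $\CS(2,3)$ by taking $A,B$ to be two vertices of the independent part and $P,O$ to be the two clique vertices. Indeed, clique--clique and clique--independent distances are $1$, and independent--independent distances are $2$. So this four-point restriction of $\CS(2,3)$ is exactly the balanced crossing configuration. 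As a cross-check, invert at $A$ after rescaling. This gives $B$ at distance $1/2$ from $A$, and $P,O$ at distance $1$ from $A$. The leaf-to-leaf distances become $d(P,B)/(d(P,A)d(B,A))=1/2$ and $d(P,O)=1$. So we get a star centered at $B$ with three branches of length $\tfrac12$, consistent with \eqref{eq:star-branches}.

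The only step I expect to need care is confirming that the equality value $-\tfrac12$ is attained, and not merely approached, in this configuration. I would do this by evaluating \eqref{eq:star-correlation} at $a=b=c=1$ with $2^{\qstar}=9/4$. This gives
\[
 A=B=C=1,\qquad U=V=W=\tfrac94 .
\]
Then
\[
 \mathcal H_0=\Her(U,U,U)=3U^2,\qquad \mathcal H_1=\Her(U,U,U)=3U^2,
\]
\[
 \mathcal N=3U\cdot U-2U^3=3U^2-2U^3 .
\]
Therefore
\[
 \chi=\frac{3U^2-2U^3}{3U^2}=1-\frac{2U}{3}=1-\frac32=-\frac12 ,
\]
as required. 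This also ties the statement to \cref{thm:five-point-stability}, where the same configuration realizes $S_{ij}=-\tfrac12\sqrt{S_{ii}S_{jj}}$.
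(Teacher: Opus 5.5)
Your proposal is correct and follows the paper's route: the corollary is \cref{thm:double-geodesic} specialized at $q=\qstar$, where $\kappa_{\qstar}=1/2$, with the equality case inherited from that theorem. That equality case is realized by the configuration with two independent-part vertices as endpoints and the two clique vertices as base and conditioning point in $\CS(2,3)$, the same one used in \cref{thm:five-point-stability}. Your direct evaluation $\chi=1-\tfrac{2}{3}\cdot 2^{\qstar}=-\tfrac12$ from \eqref{eq:star-correlation} is a useful explicit check. The only slip is in wording: in your inversion cross-check, $\widehat d(P,B)=1/2$ is a center-to-leaf branch, not a leaf-to-leaf distance (the leaf-to-leaf distances are all $1$).
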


The cube inequality is stronger than the metric specialization needed here: its six variables vary independently throughout $[0,1]^6$, whereas the defect variables arising from metrics satisfy additional relations.

\subsection{The Ptolemaic geodesic chamber}

Let \(d\) be a metric on the four distinct points \(\{A,P,B,O\}\) such that
\[
 d(A,B)=d(A,P)+d(P,B).
\]
After scaling, write
\[
 d(A,B)=1,
 \qquad d(A,P)=v,
 \qquad d(P,B)=1-v.
\]
The remaining three distances admit an exact coupling parametrization.

\begin{theorem}[Bernoulli-coupling parametrization]\label{thm:coupling-chart}
The metric \(d\) is Ptolemaic if and only if there are unique
parameters $\tau\geq0$, $u\in[0,1]$, and $m\in\R$ satisfying
\[
 |u-v|\leq m\leq u+v-2uv
\]
such that
\begin{equation}\label{eq:coupling-distances}
 \begin{aligned}
 d(A,P)&=v,& d(P,B)&=1-v,\\
 d(O,A)&=\tau+u,& d(O,B)&=\tau+1-u,\\
 d(O,P)&=\tau+m.&
 \end{aligned}
\end{equation}
Put
\[
 \delta=\frac{u+v-2uv-m}{2}.
\]
Then
\begin{equation}\label{eq:coupling-table}
 \Xi=
 \begin{pmatrix}
 uv+\delta&u(1-v)-\delta\\
 (1-u)v-\delta&(1-u)(1-v)+\delta
 \end{pmatrix}
\end{equation}
is a nonnegative \(2\times2\) table with row sums \((u,1-u)\), column sums \((v,1-v)\), and
\[
 \det\Xi=\delta\geq0.
\]
Moreover,
\begin{equation}\label{eq:ptolemy-slack-delta}
 d(A,P)d(O,B)+d(O,A)d(P,B)-d(A,B)d(O,P)=2\delta.
\end{equation}
Thus the upper endpoint \(m=u+v-2uv\) is the Ptolemy-equality face, while the lower endpoint \(m=|u-v|\) is a triangle-equality face.
\end{theorem}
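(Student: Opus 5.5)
We treat the two directions separately. For necessity, suppose $d$ is Ptolemaic on $\{A,P,B,O\}$ with $d(A,B)=1$, $d(A,P)=v$, $d(P,B)=1-v$. We define the parameters directly from the distances to $O$. Set
\[
 \tau=\frac{d(O,A)+d(O,B)-1}{2},\qquad u=\frac{d(O,A)-d(O,B)+1}{2},\qquad m=d(O,P)-\tau .
\]
By construction $d(O,A)=\tau+u$ and $d(O,B)=\tau+1-u$. The triangle inequality on $O,A,B$ gives $\tau\ge0$ and $u\in[0,1]$. Uniqueness is immediate, since these formulas are forced by \eqref{eq:coupling-distances}.

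Next we translate the remaining constraints on $m$. The Ptolemy slack identity \eqref{eq:ptolemy-slack-delta} is a direct expansion. With $d(A,P)d(O,B)+d(O,A)d(P,B)=v(\tau+1-u)+(\tau+u)(1-v)$ and $d(A,B)d(O,P)=\tau+m$, the $\tau$-terms cancel, because $v+(1-v)=1$. This is exactly why the geodesic hypothesis makes the chart work. What remains is $u+v-2uv-m=2\delta$. Hence the Ptolemy inequality with $d(O,P)$ alone on the large side is equivalent to $m\le u+v-2uv$.

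For the lower bound, the triangle inequalities $d(O,A)\le d(O,P)+d(P,A)$ and $d(O,B)\le d(O,P)+d(P,B)$ give $u-v\le m$ and $(1-u)-(1-v)=v-u\le m$. Together these give $m\ge|u-v|$.

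We then check that no other inequality is binding once these hold. The remaining triangle inequalities bounding $d(O,P)$ from above are $d(O,P)\le d(O,A)+v$ and $d(O,P)\le d(O,B)+1-v$. They read $m\le u+v$ and $m\le 2-u-v$, and both follow from $m\le u+v-2uv$, since $u+v-2uv\le\min(u+v,\,2-u-v)$.

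The other two Ptolemy inequalities put $d(A,P)d(O,B)$ or $d(O,A)d(P,B)$ alone on the large side. For instance, we need $v(\tau+1-u)\le(\tau+m)+(\tau+u)(1-v)$. This follows from $m\ge v-u$ together with $\tau\ge0$ and $\tau v\le\tau+\tau(1-v)$. The symmetric one follows from $m\ge u-v$. The triangles $APB$ and $AOB$ are already handled by the geodesic hypothesis and the choice of $\tau,u$.

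This gives sufficiency as well: any parameters in the stated ranges produce a Ptolemaic metric. Distinctness of the points, or degenerate zero distances, is left to the continuity convention; the expected main obstacle is exactly this bookkeeping of which of the twelve inequalities is redundant.

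Finally, the table $\Xi$ is routine. The row and column sums are clear from its definition. Expanding gives $\det\Xi=uv(1-u)(1-v)+\delta(\cdots)-\bigl(u(1-v)(1-u)v-\delta(\cdots)\bigr)$. The linear terms in $\delta$ sum to $\delta\bigl(uv+(1-u)(1-v)+u(1-v)+(1-u)v\bigr)=\delta$, and the $\delta^2$ terms cancel, so $\det\Xi=\delta$. Nonnegativity of the entries reduces to $-\min\bigl(uv,(1-u)(1-v)\bigr)\le\delta\le\min\bigl(u(1-v),(1-u)v\bigr)$. The upper bound is $m\ge|u-v|$, rewritten using $u+v-2uv-(u-v)=2v(1-u)$ and its symmetric analogue. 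The lower bound holds because $\delta\ge0$. The two endpoint identifications then read off from $\delta=0$ and from $m=|u-v|$.
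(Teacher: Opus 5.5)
Your proposal is correct and follows the paper's proof essentially step for step. You use the same definitions of $\tau,u,m$, and the triangles $OAP$ and $OPB$ give $m\ge|u-v|$. You use the same $\tau$-cancelling slack computation for the Ptolemy face, the same redundancy checks (your two extra Ptolemy inequalities are the paper's slacks $2(1-v)\tau+m+u-v$ and $2v\tau+m+v-u$), and the same direct expansion for $\Xi$. Two small points: your handling of $\Xi$ is slightly more precise than the paper's, since entrywise nonnegativity is the two-sided bound $-\min\{uv,(1-u)(1-v)\}\le\delta\le\min\{u(1-v),(1-u)v\}$ rather than exactly $0\le\delta\le\cdots$; and the only inequalities you leave unlisted, $d(A,P)\le d(A,O)+d(O,P)$ and $d(P,B)\le d(P,O)+d(O,B)$, follow at once from $m\ge|u-v|$ and $\tau\ge0$.
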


\begin{proof}
Set
\[
 \tau=\frac{d(O,A)+d(O,B)-1}{2},
 \qquad
 u=d(O,A)-\tau,
 \qquad
 m=d(O,P)-\tau.
\]
The triangle inequalities on \(OAB\) give \(\tau\geq0\) and \(u\in[0,1]\). The triangles \(OAP\) and \(OPB\) give
\[
 m\geq |u-v|.
\]
The nontrivial Ptolemy inequality is
\[
 d(A,B)d(O,P)
 \leq d(A,P)d(O,B)+d(O,A)d(P,B),
\]
which becomes
\[
 m\leq u+v-2uv.
\]
The remaining triangle inequalities follow from $\tau\geq0$,
$u,v\in[0,1]$, and $|u-v|\leq m$; the upper bound on $m$ also gives
$m\leq u+v$ and $m\leq2-u-v$. For the three Ptolemy product
inequalities, their slacks are
\[
 u+v-2uv-m,
 \qquad
 2(1-v)\tau+m+u-v,
 \qquad
 2v\tau+m+v-u,
\]
which are nonnegative. Conversely, these calculations verify every
triangle and Ptolemy inequality from the stated parameter bounds.

The entries in \eqref{eq:coupling-table} are nonnegative exactly when
\[
 0\leq\delta\leq\min\{u(1-v),(1-u)v\},
\]
which is equivalent to the interval for \(m\). Direct expansion gives \(\det\Xi=\delta\), and \eqref{eq:ptolemy-slack-delta} follows from \eqref{eq:coupling-distances}.
\end{proof}

The table \(\Xi\) is a coupling of two Bernoulli laws. Its determinant is
the covariance of the associated indicator variables, and \(m\) is their
mismatch probability. As $m$ decreases from its upper endpoint to its lower endpoint, the coupling moves from independence to the upper Fr\'echet--Hoeffding bound.

There is also a compact simplex form. Write the entries of \(\Xi\) as
\(\xi_0,\xi_1,\xi_2,\xi_3\) in row-major order, and divide every distance
and every \(\xi_i\) by \(1+\tau\). Denote the rescaled quantities again by
\(d\) and \(\xi_i\), and put
\[
 \widehat\tau=\frac{\tau}{1+\tau}.
\]
Then
\begin{equation}\label{eq:simplex-chamber}
 \widehat\tau+\xi_0+\xi_1+\xi_2+\xi_3=1,
 \qquad
 \xi_0\xi_3\geq\xi_1\xi_2,
\end{equation}
and
\begin{align*}
 d(A,P)&=\xi_0+\xi_2,&
 d(P,B)&=\xi_1+\xi_3,\\
 d(O,A)&=\widehat\tau+\xi_0+\xi_1,&
 d(O,P)&=\widehat\tau+\xi_1+\xi_2,\\
 d(O,B)&=\widehat\tau+\xi_2+\xi_3.&
\end{align*}
Hence the projective closure of the complete geodesic-insertion domain is a four-simplex cut by one \(2\times2\) determinant inequality.

\begin{remark}[Boundary convention]\label{rem:pseudometric-boundary}
All endpoint arguments below are carried out in the closed Ptolemaic pseudometric chamber. A formal endpoint may have a zero distance because two inserted points coalesce. In that case, the asserted estimate means the limit along positive metrics approaching the endpoint. The line and star bounds used below are uniform, and the relevant Schur quantities extend continuously along these approximating metrics.
\end{remark}

\subsection{An endpoint principle for conditioned correlations}

For a four-point metric on $\{A_0,P,B_0,O\}$, use the powered-distance
notation
\[
 R=d(P,O)^q,
 \quad Y=d(P,A_0)^q,
 \quad Z=d(P,B_0)^q,
\]
\[
 A=d(O,A_0)^q,
 \quad B=d(O,B_0)^q,
 \quad S=d(A_0,B_0)^q.
\]
Fix positive values of \(Y,Z,A,B,S\). For \(R>0\), define
\begin{align}
 U(R)&=4RY-(R+Y-A)^2,\label{eq:UR}\\
 V(R)&=4RZ-(R+Z-B)^2,\label{eq:VR}\\
 W(R)&=2R(Y+Z-S)-(R+Y-A)(R+Z-B),\label{eq:WR}
\end{align}
and, whenever \(U(R),V(R)>0\),
\begin{equation}\label{eq:chiR}
 \chi(R)=\frac{W(R)}{\sqrt{U(R)V(R)}}.
\end{equation}

\begin{theorem}[Endpoint principle]\label{thm:endpoint-principle}
On every interval on which \(U,V>0\), an interior critical point of \(\chi\) with \(\chi<0\) is a strict local maximum. Consequently, the negative minimum of \(\chi\) on a compact feasible interval is attained at an endpoint.
\end{theorem}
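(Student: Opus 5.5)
The plan is a direct second-derivative computation in logarithmic form. It exploits the fact that \(U\), \(V\), \(W\) are all quadratic in \(R\) with the same leading coefficient \(-1\). Expanding \eqref{eq:UR}--\eqref{eq:WR} gives
\[
 U''=V''=W''=-2 .
\]
On an interval where \(U,V>0\) and \(\chi<0\), we have \(W<0\). So I will write \(\chi=-e^{f}\) with
\[
 f=\log(-W)-\tfrac12\log U-\tfrac12\log V.
\]
Since \(\chi'=-e^f f'\) and, at a critical point, \(\chi''=-e^f f''\), it suffices to show \(f''>0\) at every interior critical point. That makes \(|\chi|\) a strict local minimum, i.e.\ \(\chi\) a strict local maximum.

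Put \(a=U'/U\), \(b=V'/V\), \(w=W'/W\). The critical-point condition \(f'=0\) reads \(w=(a+b)/2\). Differentiating once more,
\[
 f''=\frac{W''}{W}-w^2-\frac12\Bigl(\frac{U''}{U}-a^2\Bigr)-\frac12\Bigl(\frac{V''}{V}-b^2\Bigr).
\]
Substituting the second derivatives and the critical-point relation gives
\[
 f''=\frac{2}{|W|}+\frac1U+\frac1V+\frac{a^2+b^2}{2}-\frac{(a+b)^2}{4}
 =\frac{2}{|W|}+\frac1U+\frac1V+\frac{(a-b)^2}{4}>0 .
\]
Every term is nonnegative and the first three are strictly positive. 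This proves the strict local maximum claim.

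For the consequence, take a compact feasible interval on which \(\chi\) is continuous and attains a negative minimum. If the minimum were attained at an interior point, that point would be a critical point with \(\chi<0\). By the above it would be a strict local maximum, which is incompatible with being a local minimum unless \(\chi\) is locally constant, and strictness rules that out. Hence the minimum lies at an endpoint.

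The only delicate point is the boundary behaviour. An endpoint of the feasible chamber may have \(U\) or \(V\) tending to \(0\), or may be a pseudometric endpoint. There I would invoke the continuity convention of \cref{rem:pseudometric-boundary}: apply the argument on slightly shrunken compact subintervals and pass to the limit. I expect this bookkeeping, rather than the calculus, to be the main obstacle. The derivative computation itself is routine once the uniform leading coefficient \(-1\) is noticed.
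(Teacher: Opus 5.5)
Your proof is correct, and it takes a shorter route than the paper's. The paper passes to $t=\log R$ and writes the conditioned Schur complement as the pencil $K(t)=K_0-u(t)u(t)^{\mathsf T}-v(t)v(t)^{\mathsf T}$. It then normalizes the critical point by a diagonal congruence to $\bigl(\begin{smallmatrix}1&-r\\-r&1\end{smallmatrix}\bigr)$. It uses the critical equation to replace the cross term $\nu=v_1v_2$, which has no fixed sign because $v\propto(Y-A,Z-B)$. This yields $\ell''=U_0+2\pi/r+\tfrac14[\cdots]^2$.

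You work instead with the unnormalized $U,V,W$. These are $4R$ times the Schur-complement entries, and that factor cancels in $\chi$. In the variable $R$, the matrix $\bigl(\begin{smallmatrix}U&W\\W&V\end{smallmatrix}\bigr)$ has constant second derivative $-2\bigl(\begin{smallmatrix}1&1\\1&1\end{smallmatrix}\bigr)$. There is no second, sign-indefinite rank-one term, so the curvature contributions $2/|W|$, $1/U$, $1/V$ are positive outright. The critical equation is needed only to collapse $\tfrac12(a^2+b^2)-w^2$ into $\tfrac14(a-b)^2$.

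The two computations agree: the paper's $\ell(t)=\log(-\chi)$ equals your $f(e^t)$ exactly, so at a critical point $\ell''(t)=R^2f''(R)$. Your version is briefer and avoids the congruence normalization. The paper's version phrases the principle as a fact about a general $2\times2$ Schur-complement pencil, which is how it makes the point that no Ptolemy geometry enters. Your treatment of the ``consequently'' clause, and of degenerate endpoints via \cref{rem:pseudometric-boundary}, is at the same level of detail as the paper's.
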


\begin{proof}
Put \(R=e^t\),
\[
 d=\binom{Y-A}{Z-B},
 \qquad
 H=\begin{pmatrix}Y&(Y+Z-S)/2\\(Y+Z-S)/2&Z\end{pmatrix}.
\]
The Schur complement obtained by conditioning on the coordinate of squared length \(R\) is
\begin{equation}\label{eq:schur-pencil}
 K(t)=K_0-u(t)u(t)^{\mathsf T}-v(t)v(t)^{\mathsf T},
\end{equation}
where
\[
 u(t)=\frac{e^{t/2}}2\binom11,
 \qquad
 v(t)=\frac{e^{-t/2}}2d,
\]
and \(K_0\) is independent of \(t\). Therefore
\[
 K'=-uu^{\mathsf T}+vv^{\mathsf T},
 \qquad
 K''=-uu^{\mathsf T}-vv^{\mathsf T}.
\]
At a fixed interior point, apply a fixed positive diagonal congruence
so that
\[
 K=\begin{pmatrix}1&-r\\-r&1\end{pmatrix},
 \qquad r>0.
\]
After this fixed congruence, write
$K(t)=\bigl(\begin{smallmatrix}a(t)&c(t)\\c(t)&b(t)\end{smallmatrix}\bigr)$;
at the chosen point, $a=b=1$ and $c=-r$. Write the transformed vectors
as \(u=(u_1,u_2)^{\mathsf T}\) and \(v=(v_1,v_2)^{\mathsf T}\). The
components \(u_1,u_2\) are positive, and
\[
 \ell=\log(-c)-\frac12\log a-\frac12\log b=\log(-\chi).
\]
At a critical point of \(\chi\), one has \(\ell'=0\). Put
\[
 U_0=u_1^2+u_2^2,
 \qquad
 V_0=v_1^2+v_2^2,
 \qquad
 \pi=u_1u_2,
 \qquad
 \nu=v_1v_2.
\]
The critical equation is
\[
 \pi-\nu=\frac r2(V_0-U_0).
\]
A direct differentiation gives
\begin{equation}\label{eq:ell-second}
 \ell''
 =U_0+\frac{2\pi}{r}
 +\frac14\left[(v_1^2-u_1^2)-(v_2^2-u_2^2)\right]^2>0.
\end{equation}
Since \(\chi=-e^{\ell}\), it follows that \(\chi''=\chi\ell''<0\). Thus every negative interior critical point is a strict local maximum.
\end{proof}

The endpoint principle concerns this $2\times2$ Schur-complement pencil and does not use Ptolemy geometry. The geometry enters through the endpoint classification in \cref{thm:coupling-chart}.

\subsection{Star and line endpoint estimates}

\begin{proposition}[Three-leaf star]\label{prop:star-elementary}
Let \(d\) be a three-leaf star with center \(P\) and positive branch lengths. For \(1\leq q\leq\log_2 3\), the partial correlation of two leaf vectors after conditioning on the third satisfies
\begin{equation}\label{eq:star-elementary-bound}
 -\frac{h_q}{2-h_q}\leq\chi_q\leq0.
\end{equation}
If $q>1$, then $\chi_q<0$, and equality in the lower bound holds
exactly for the balanced star. If $q=1$, then $\chi_1=0$ for every
three-leaf star.
\end{proposition}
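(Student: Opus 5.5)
The plan is to compute the partial correlation explicitly in normalized coordinates and then bound it by monotonicity, using only \cref{lem:two-summand} and the identity $\eta_q(t)=\eta_q(t^{-1})$ from \eqref{eq:eta-global}.

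\textbf{Step 1: the normalized Gram matrix.} Let the leaves be $A,B,O$ with branch lengths $a,b,o>0$, so that $d(X,Y)$ is the sum of the two branch lengths. In the Schoenberg matrix based at the center $P$, the diagonal entries are $a^q,b^q,o^q$. For leaves $X\ne Y$ with branch lengths $s,t$,
\[
 G_{XY}=\frac{s^q+t^q-(s+t)^q}{2}=-\frac12\,\eta_q(s/t)\,(st)^{q/2}.
\]
Rescaling by the positive diagonal $\operatorname{diag}(a^{-q/2},b^{-q/2},o^{-q/2})$ does not change the partial correlation. After this rescaling the matrix has unit diagonal and off-diagonal entries $-\alpha,-\beta,-\gamma$ for the pairs $AB$, $AO$, $BO$, where
\[
 \alpha=\tfrac12\eta_q(a/b),\qquad
 \beta=\tfrac12\eta_q(a/o),\qquad
 \gamma=\tfrac12\eta_q(b/o).
\]
By \eqref{eq:eta-global}, all three lie in $[0,h_q/2]$. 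Since $q\le\log_2 3$, we have $h_q\le1$, so each lies in $[0,1/2]$.

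\textbf{Step 2: the explicit formula.} Conditioning on $O$ gives
\[
 \chi_q=-\frac{\alpha+\beta\gamma}{\sqrt{(1-\beta^2)(1-\gamma^2)}}.
\]
The upper bound $\chi_q\le0$ is immediate because $\alpha,\beta,\gamma\ge0$.

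At $q=1$ we have $h_1=0$, so $\alpha=\beta=\gamma=0$ and $\chi_1=0$. For $q>1$ we have $(1+t)^q>1+t^q$ for every $t>0$, so $\eta_q>0$. Hence $\alpha>0$, and therefore $\chi_q<0$.

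\textbf{Step 3: monotonicity and the lower bound.} On $[0,1/2]^3$ the map $(\alpha,\beta,\gamma)\mapsto(\alpha+\beta\gamma)/\sqrt{(1-\beta^2)(1-\gamma^2)}$ is nondecreasing in each variable.
\begin{itemize}
 \item In $\alpha$ it is strictly increasing.
 \item In $\beta$, the numerator is nondecreasing (since $\gamma\ge0$) and the denominator is strictly decreasing for $\beta>0$. The variable $\gamma$ behaves symmetrically.
\end{itemize}
So the minimum of $\chi_q$ is at most the value at $\alpha=\beta=\gamma=h/2$, where $h=h_q$:
\[
 -\frac{\tfrac h2\bigl(1+\tfrac h2\bigr)}{\bigl(1-\tfrac h2\bigr)\bigl(1+\tfrac h2\bigr)}
 =-\frac{h}{2-h}.
\]
This gives \eqref{eq:star-elementary-bound}.

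\textbf{Step 4: the equality case.} Suppose $q>1$ and equality holds in the lower bound. Strict monotonicity in $\alpha$ forces $\alpha=h/2$. The positive numerator and the strictly decreasing denominator then force $\beta=\gamma=h/2$ as well. By the equality clause of \cref{lem:two-summand}, these force $a=b$, $a=o$ and $b=o$, which is the balanced star.

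At $q=\log_2 3$ we have $h=1$, so $q=2$ is not reached. The equality clause stated for $1<q<2$ therefore still applies.

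\textbf{Main obstacle.} The only step that needs care is the monotonicity in $\beta$ and $\gamma$. It relies on $\beta,\gamma<1$, which holds because $h/2\le 1/2$, and on the sign of $\gamma$ in the numerator.
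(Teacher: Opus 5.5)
Your proof is correct and follows the paper's argument. You normalize the center-based Schoenberg matrix to unit diagonal with off-diagonal entries $-\alpha,-\beta,-\gamma\in[-h_q/2,0]$ via \cref{lem:two-summand}, write the partial correlation explicitly, and bound it by its value at $\alpha=\beta=\gamma=h_q/2$; your coordinatewise monotonicity is equivalent to the paper's separate bounds on numerator and denominator, and the equality clause of the lemma then gives the balanced star. One wording slip: in Step~3 you mean that $\chi_q$ is at least its value at $(h/2,h/2,h/2)$, not that ``the minimum is at most'' that value.
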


\begin{proof}
For branch lengths $r,s>0$, define the normalized convexity defect
\[
 \delta_q(r,s)
 =\frac{(r+s)^q-r^q-s^q}{2(rs)^{q/2}}.
\]
Let $\alpha,\beta,\gamma$ be the defects for the three branch pairs.
\Cref{lem:two-summand} gives
\[
 0\leq\alpha,\beta,\gamma\leq\frac{h_q}{2}.
\]
The three pairwise correlations of the leaf vectors based at the center are \(-\alpha,-\beta,-\gamma\). Conditioning on the third leaf gives
\[
 \chi_q=\frac{-\gamma-\alpha\beta}
 {\sqrt{(1-\alpha^2)(1-\beta^2)}}.
\]
Hence
\[
 \chi_q\geq
 -\frac{h_q/2+h_q^2/4}{1-h_q^2/4}
 =-\frac{h_q}{2-h_q}.
\]
For $q>1$, all three defects are positive, so $\chi_q<0$. Equality in the lower bound forces equality in all three defect bounds, and \cref{lem:two-summand} then forces the branch lengths to be equal. At $q=1$, all three defects vanish, so $\chi_1=0$.
\end{proof}

\begin{proposition}[Four points on a line]\label{prop:line-bound}
Let \(A,O,P,B\) occur in this order on a line, with positive successive
gaps \(\ell,m,n\). For \(1\leq q\leq\log_2 3\), the partial correlation
satisfies
\begin{equation}\label{eq:line-bound}
 -\frac{2h_q}{4-h_q^2}
 \leq\chi_q\leq
 \frac{h_q^2}{4-h_q^2}.
\end{equation}
The same estimate holds for every ordering in which \(P\) and \(O\) lie on the geodesic from \(A\) to \(B\).
\end{proposition}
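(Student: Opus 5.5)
Since $A,O,P,B$ lie on a line, the metric is embedded in $\R$. The plan is to compute the Schoenberg data of $d^{q/2}$ based at $P$ explicitly and reduce the partial correlation to the same kind of "defect" quantities used in the proof of \cref{prop:star-elementary}. For two points $X,Y$ on the line, the correlation of the vectors for $X,Y$ based at $P$ has two forms, determined by \cref{lem:two-summand} through the normalized defect
\[
 \delta_q(r,s)=\frac{(r+s)^q-r^q-s^q}{2(rs)^{q/2}}\in\Bigl[0,\tfrac{h_q}{2}\Bigr].
\]
If $X,Y$ lie on opposite sides of $P$, at distances $r,s$, the correlation is $-\delta_q(r,s)\in[-h_q/2,0]$. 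If they lie on the same side, at distances $r<s$, write $s=r+t$. The correlation is then $\bigl(r^q+(r+t)^q-t^q\bigr)/\bigl(2(r(r+t))^{q/2}\bigr)$, which is positive. For $q\le2$ the point set is a snowflake of a line, so the Gram matrix is positive semidefinite and all of these quantities are well defined.

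For the order $A,O,P,B$, the points $A$ and $O$ lie on one side of $P$ and $B$ on the other. Write the three pairwise correlations as $\rho_{AO}>0$, $\rho_{AB}=-\alpha$, and $\rho_{OB}=-\beta$, with $\alpha,\beta\in[0,h_q/2]$. The conditioned correlation is
\[
 \chi_q=\frac{\rho_{AB}-\rho_{AO}\rho_{OB}}{\sqrt{(1-\rho_{AO}^2)(1-\rho_{OB}^2)}}
 =\frac{-\alpha+\beta\rho_{AO}}{\sqrt{(1-\rho_{AO}^2)(1-\beta^2)}}.
\]
For the lower bound, I would discard the nonnegative term $\beta\rho_{AO}$ only after controlling the denominator, and here lies the difficulty. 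As $\rho_{AO}\to1$ the denominator degenerates, and $\alpha$ and $\beta$ are then forced together, since $A$ and $O$ nearly coincide relative to $P$. The key step is therefore a coupled estimate: express $-\alpha+\beta\rho_{AO}$ as a multiple of $\sqrt{1-\rho_{AO}^2}$, or equivalently reparametrize by the Schur complement $S_{AA}$. I expect $(\beta\rho_{AO}-\alpha)^2\le \gamma^2(1-\rho_{AO}^2)(1-\beta^2)$ with $\gamma=2h_q/(4-h_q^2)$. The right-hand side is exactly what two correlations of size $h_q/2$ produce: $(h_q/2+h_q/2)/(1+h_q^2/4)$ has the same form as in the star case.

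The plan is to prove this in three steps.
\begin{enumerate}
 \item Apply \cref{thm:endpoint-principle}, varying the position of the conditioning point $O$, to reduce to extreme configurations. These are the case where $O$ coalesces with $A$ or with $P$ (covered by \cref{rem:pseudometric-boundary}), and the balanced equal-gap configurations.
 \item On those faces, the expression reduces to one-variable defects. There the bound follows from $\delta_q\le h_q/2$ together with the elementary inequality $(x+y)/(1+xy)\le 2\eta/(1+\eta^2)$ for $0\le x,y\le\eta=h_q/2<1$.
 \item For the upper bound $h_q^2/4\big/(1-h_q^2/4)$, treat the configuration in which the numerator is positive the same way. The positive part is at most a product of two defects, $\alpha\beta$-like, exactly as the $\alpha\beta$ term in \cref{prop:star-elementary}, giving $\eta^2/(1-\eta^2)$.
\end{enumerate}

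The other orderings with $P,O$ between $A$ and $B$ are handled in two ways. The order $A,P,O,B$ follows from $A,O,P,B$ by the reflection $A\leftrightarrow B$, since $\chi$ is symmetric in its two endpoints. The coincident-gap cases follow by continuity. The main obstacle is the degenerate regime where $\rho_{AO}\to1$, that is, where the gap $m$ between $O$ and $P$ is small compared with $\ell$ or with $n$. There the reduction must be done carefully in terms of the Schur complements, for example by rescaling via \cref{lem:two-summand} applied to the sub-gaps.
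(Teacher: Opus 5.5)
There is a genuine gap. You set up $\chi_q=(-\alpha+\beta\rho_{AO})/\sqrt{(1-\rho_{AO}^2)(1-\beta^2)}$ correctly and you locate the difficulty correctly at $\rho_{AO}\to1$. None of the three steps resolves it.

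Step 1 misuses \cref{thm:endpoint-principle}. That principle holds $Y,Z,A,B,S$ fixed and varies only $R=d(P,O)^q$. Sliding $O$ along the line changes $d(O,A)$ and $d(O,B)$ as well. If you vary $d(O,P)$ alone, you leave the line. In the chamber of \cref{thm:coupling-chart} (here $\tau=0$), the line metric is itself the lower triangle endpoint of that interval. This proposition is exactly the endpoint estimate the principle needs as input, so the reduction is circular. Nothing in the principle produces ``balanced equal-gap'' extremes either.

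Step 2 has the wrong constant. With $\eta=h_q/2$, the target is $2h_q/(4-h_q^2)=\eta/(1-\eta^2)$. Your bound is $(h_q/2+h_q/2)/(1+h_q^2/4)=2\eta/(1+\eta^2)=4h_q/(4+h_q^2)$, which is strictly larger for $0<h_q\le1$. Even a successful reduction would only give the weaker bound $\chi_q\ge-4h_q/(4+h_q^2)$.

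Step 3 does not follow from your own formula. The positive term there is $\beta\rho_{AO}$, and $\rho_{AO}$ is not a defect: it tends to $1$.

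The missing idea is to drop the normalization at $P$ and work with the unnormalized Schur quantities in gap variables, where the degeneracy cancels. Put
$e=(\ell+m)^q-\ell^q-m^q$, $f=(m+n)^q-m^q-n^q$, and $k=(\ell+m+n)^q-(\ell+m)^q-(m+n)^q+m^q$.
By the symmetry of the Heron polynomial, the quantities in \eqref{eq:UR}--\eqref{eq:WR} become $U=4\ell^qm^q-e^2$, $V=4m^qn^q-f^2$ and $W=ef-2m^qk$, with $\chi_q=W/\sqrt{UV}$.

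The representation $k=q(q-1)\int_0^\ell\int_0^n(m+s+t)^{q-2}\,dt\,ds$ shows that $k\ge0$ and that $k$ decreases in $m$. Hence $k\le(\ell+n)^q-\ell^q-n^q\le h_q(\ell n)^{q/2}$. \Cref{lem:two-summand} also gives $e\le h_q(\ell m)^{q/2}$ and $f\le h_q(mn)^{q/2}$. Therefore $\sqrt{UV}\ge(4-h_q^2)m^q(\ell n)^{q/2}$ and $-2h_qm^q(\ell n)^{q/2}\le W\le h_q^2m^q(\ell n)^{q/2}$. The common factor $m^q(\ell n)^{q/2}$ cancels, and both bounds follow at once with no endpoint argument.

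Your upper-bound intuition, a product of two defects, is exactly the term $ef$, but it only becomes visible in these variables. The lower bound rests on comparing the mixed difference $k$ with the $(\ell,n)$ defect, which your plan never identifies. Your reflection argument for the ordering $A,P,O,B$ is fine.
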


\begin{proof}
Put
\begin{align*}
 e&=(\ell+m)^q-\ell^q-m^q,\\
 f&=(m+n)^q-m^q-n^q,\\
 k&=(\ell+m+n)^q-(\ell+m)^q-(m+n)^q+m^q.
\end{align*}
The mixed difference has the integral representation
\[
 k=q(q-1)\int_0^{\ell}\int_0^n(m+s+t)^{q-2}\,dt\,ds.
\]
Thus \(k\geq0\), and it decreases as the separating gap \(m\) increases. Hence $k\le(\ell+n)^q-\ell^q-n^q$, and \cref{lem:two-summand} gives
\[
 e\leq h_q(\ell m)^{q/2},
 \qquad
 f\leq h_q(mn)^{q/2},
 \qquad
 k\leq h_q(\ell n)^{q/2}.
\]
A direct Schur-complement calculation gives
\[
 U=4\ell^qm^q-e^2,
 \qquad
 V=4m^qn^q-f^2,
 \qquad
 W=ef-2m^qk.
\]
Consequently,
\[
 \sqrt{UV}\geq(4-h_q^2)m^q(\ell n)^{q/2},
\]
while
\[
 -2h_qm^q(\ell n)^{q/2}
 \leq W\leq
 h_q^2m^q(\ell n)^{q/2}.
\]
Division proves \eqref{eq:line-bound}. Exchanging \(A\) and \(B\) covers the other ordering.
\end{proof}

The elementary comparisons
\begin{equation}\label{eq:endpoint-constant-comparison}
 \frac{h_q}{2-h_q}\leq\kappa_q,
 \qquad
 \frac{2h_q}{4-h_q^2}\leq\kappa_q,
 \qquad
 \frac{h_q^2}{4-h_q^2}\leq\frac13\leq\kappa_q
\end{equation}
hold for \(0\leq h_q\leq1\).

\begin{lemma}[Shared geodesic segment]\label{lem:shared-segment}
Suppose
\[
 d(A,B)=d(A,P)+d(P,B),
 \qquad
 d(O,B)=d(O,P)+d(P,B).
\]
Then, for \(1\leq q\leq\log_2 3\),
\[
 -\kappa_q\leq\chi_q\leq\frac{h_q^2}{4-h_q^2}.
\]
\end{lemma}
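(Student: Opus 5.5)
The plan is to treat $e=d(A,O)$ as the single free parameter, keeping the other five distances fixed, and to prove an endpoint principle in this variable. With $a=d(A,P)$, $b=d(P,B)$, $s=d(O,P)$, the hypotheses fix $d(A,B)=a+b$ and $d(O,B)=s+b$. I will first compute the feasible interval for $e$ from the triangle and Ptolemy inequalities.
\begin{itemize}
\item The triangle inequalities in $AOP$ give $|a-s|\le e\le a+s$.
\item The Ptolemy inequality with $d(A,O)d(P,B)$ on the left gives $e\le a+s+2as/b$, which is weaker than $e\le a+s$.
\item The other two Ptolemy pairings give $e\ge s-a$ and $e\ge a-s$.
\end{itemize}
So the feasible interval should be exactly $[\,|a-s|,\,a+s\,]$. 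At the upper endpoint the configuration is a three-leaf star centred at $P$ with leaves $A,O,B$. At the lower endpoint all four points lie on a line.

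\textbf{Endpoint principle in $e$.} I use the notation of \eqref{eq:UR}--\eqref{eq:chiR}, with base $P$, $A_0=A$, $B_0=B$. The variable $e$ enters only through $A=e^q$. In \eqref{eq:WR}, $W$ is affine in $A$ with slope $W'=R+Z-B$. In \eqref{eq:UR}, $U$ is a concave quadratic in $A$ with $U''=-2$. Finally, $V$ does not involve $A$. So on any interval where $U,V>0$ it suffices to study $f=W/\sqrt U$ as a function of $A$.

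Differentiating, $f'$ has the sign of $W'U-\tfrac12WU'$. The derivative of this numerator is $\tfrac12W'U'+W$. At a critical point $U'=2W'U/W$, so the numerator derivative equals $(W'^2U+W^2)/W$. This has the sign of $W$. Hence a critical point with $\chi<0$ is a strict local maximum, and one with $\chi>0$ is a strict local minimum. Therefore the minimum of $\chi$ (when negative) and the maximum of $\chi$ (when positive) over the feasible interval are attained at the endpoints. Since $A\mapsto e$ is monotone, this transfers directly to the variable $e$. The degenerate situations, namely $a=s$ at the lower endpoint or a vanishing diagonal Schur entry, are handled by the continuity convention of \cref{rem:pseudometric-boundary}.

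\textbf{Endpoint estimates.} At $e=a+s$ we have a star centred at $P$, and $\chi_q$ is the partial correlation of two leaves conditioned on the third. \Cref{prop:star-elementary} gives $-h_q/(2-h_q)\le\chi_q\le0$. By \eqref{eq:endpoint-constant-comparison}, this lies inside $[-\kappa_q,\,h_q^2/(4-h_q^2)]$.

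At $e=|a-s|$ there are two line orders.
\begin{itemize}
\item If $s\le a$, the order is $A,O,P,B$. Then $P$ and $O$ both lie on the geodesic from $A$ to $B$, and \cref{prop:line-bound} together with \eqref{eq:endpoint-constant-comparison} gives the claim.
\item If $s>a$, the order is $O,A,P,B$, and $O$ is not between $A$ and $B$. This case is not literally covered by \cref{prop:line-bound}.
\end{itemize}

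\textbf{The main obstacle} is this second line order. I would redo the Schur-complement computation from the proof of \cref{prop:line-bound} with gaps $\ell=d(O,A)$, $m=d(A,P)$, $n=d(P,B)$. I expect the same structure, with $U=4(\text{gap product})^q-(\text{defect})^2$ and $W$ a combination of two adjacent defects and a mixed difference $k\ge0$. I would bound the defects again by \cref{lem:two-summand} and the mixed difference by its integral representation, aiming for the lower bound $-2h_q/(4-h_q^2)$ and the upper bound $h_q^2/(4-h_q^2)$.

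If the signs do not align, the fallback is to exploit the symmetry of the partial correlation under exchanging base and conditioning point. Geometrically, $\chi(A,B\mid P;O)$ is the cosine of the dihedral angle along the edge $PO$, so it equals $\chi(A,B\mid O;P)$. In that relabelling, $A$ lies between the base and $B$, which may bring the configuration back within the reach of \cref{prop:line-bound}.

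Combining the endpoint principle with these endpoint estimates then gives $-\kappa_q\le\chi_q\le h_q^2/(4-h_q^2)$ on the whole feasible interval.
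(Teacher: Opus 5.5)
Your outline is the paper's proof: fix $d(A,P)$, $d(P,B)$, $d(O,P)$, vary $d(A,O)$ over $[\,|d(A,P)-d(O,P)|,\;d(A,P)+d(O,P)\,]$, exclude a negative interior minimum, and evaluate the star and line endpoints. Your interval computation and your critical-point analysis in the variable $A=e^q$ are correct.

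The paper instead parametrizes by the cosine $\rho$. There $\chi_q'$ has the sign of $\gamma\rho-\beta$ with $\beta,\gamma\le0$, which also shows that $\chi_q$ increases in $\rho$ wherever it is positive. So the paper places a positive maximum at the line endpoint. Your version does not need the signs of $\beta,\gamma$. It only places a positive maximum at \emph{some} endpoint, which suffices because the star endpoint has $\chi_q\le0$.

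\textbf{The gap.} It is the case you flagged: the line endpoint when $d(O,P)>d(A,P)$, with order $O,A,P,B$. You leave it as a plan, and the fallback does not work. The identity $\chi(A,B\mid P;O)=\chi(A,B\mid O;P)$ is true. But \cref{prop:line-bound} needs both the base point and the conditioning point to lie between the correlated points $A$ and $B$. Exchanging the roles of $P$ and $O$ does not move $O$ into that segment. The paper's proof also cites \cref{prop:line-bound} at ``the line endpoint'' without separating the two orders, so it passes over this case too. Your proof still has to close it.

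\textbf{The fix.} No new Schur-complement computation is needed; use \cref{lem:inversion}.
\begin{itemize}
\item Its covariance $\widehat U=U/(R^2Y^2)$, $\widehat V=V/(R^2Z^2)$, $\widehat W=W/(R^2YZ)$ is an algebraic identity in the six powered distances. So inversion at $P$ preserves $\chi_q$.
\item Put $P$ at $0$, so that $O=-(\ell+m)$, $A=-m$, $B=n$. The inverted metric is again a line metric: keep $P$ at $0$ and send every other point $t$ to $1/t$. This map reverses the order of points on each side of $P$.
\item The images $-1/(\ell+m)$, $-1/m$, $1/n$ come in the order $A,O,P,B$, with positive gaps $1/m-1/(\ell+m)$, $1/(\ell+m)$, $1/n$.
\item Now \cref{prop:line-bound} applies literally and gives $-2h_q/(4-h_q^2)\le\chi_q\le h_q^2/(4-h_q^2)$ at this endpoint.
\end{itemize}
Together with \eqref{eq:endpoint-constant-comparison} and your endpoint principle, this completes the proof. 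The coalescent case $d(A,P)=d(O,P)$ goes by continuity, as you say.
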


\begin{proof}
Fix \(d(A,P),d(P,B),d(O,P)\) and vary \(a=d(A,O)\) over its triangle interval. Write
\[
 x=d(A,P),\qquad b=d(P,B),\qquad c=d(O,P).
\]
Since \(d(A,B)=x+b\) and \(d(O,B)=c+b\), the three Ptolemy slacks are
\[
 b(a+x-c),\qquad b(x+c-a)+2xc,\qquad b(a+c-x),
\]
so the entire closed triangle interval \( |x-c|\le a\le x+c\) remains Ptolemaic. If an endpoint is a pseudometric, we use the convention in \cref{rem:pseudometric-boundary}. Put
\[
 \rho=\frac{R+Y-A}{2\sqrt{RY}},
 \qquad
 \beta=\frac{R+Z-B}{2\sqrt{RZ}},
 \qquad
 \gamma=\frac{Y+Z-S}{2\sqrt{YZ}}.
\]
Here \(\beta,\gamma\leq0\), and
\[
 \chi_q(\rho)=
 \frac{\gamma-\rho\beta}
 {\sqrt{(1-\rho^2)(1-\beta^2)}}.
\]
Its derivative has the sign of \(\gamma\rho-\beta\). Any interior critical point is therefore a local maximum. The two endpoints are a three-leaf star and a line metric, so \Cref{prop:star-elementary,prop:line-bound}, together with \eqref{eq:endpoint-constant-comparison}, give the lower bound.

If \(\chi_q>0\), write \(\beta=-b_0\), \(\gamma=-c_0\) with \(b_0,c_0\geq0\). Positivity gives \(b_0\rho>c_0\), and hence \(b_0-c_0\rho>0\). Thus \(\chi_q\) increases with \(\rho\) throughout its positive region, so its positive maximum occurs at the line endpoint. \Cref{prop:line-bound} gives the upper bound.
\end{proof}

\begin{lemma}[Positive correlations reduce to a line]\label{lem:positive-to-line}
For every Ptolemaic metric on four distinct points with a geodesic
insertion, and every
\(1\leq q\leq\log_2 3\),
\[
 \chi_q>0
 \quad\Longrightarrow\quad
 \chi_q\leq\frac{h_q^2}{4-h_q^2}.
\]
\end{lemma}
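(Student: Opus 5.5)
The plan is to apply the endpoint principle in the variable $R=d(P,O)^q$, or better in the Bernoulli-coupling chart of \cref{thm:coupling-chart}, using the positivity of $\chi_q$ to push toward a line configuration. Normalize $d(A,B)=1$, $d(A,P)=v$, $d(P,B)=1-v$, and write the position of $O$ through parameters $(\tau,u,m)$ with $|u-v|\le m\le u+v-2uv$. Then $R=(\tau+m)^q$, while $Y,Z,S$ are fixed and $A,B$ depend only on $(\tau,u)$. So for fixed $(\tau,u)$ the correlation is exactly the function $\chi(R)$ of \eqref{eq:chiR}, with $R$ ranging over a compact interval whose endpoints are the triangle face $m=|u-v|$ and the Ptolemy face $m=u+v-2uv$.

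The first step is a positive-region analogue of \cref{thm:endpoint-principle}. At an interior critical point with $\chi>0$, repeat the computation with $K=\bigl(\begin{smallmatrix}1&r\\ r&1\end{smallmatrix}\bigr)$, $r>0$, and $\ell=\log\chi$. I expect the analogous second-derivative formula to have indefinite sign, so I would not rely on it. Instead, I would first exclude positivity on the Ptolemy face. There, after inversion at $A$, the configuration is a double geodesic (\cref{prop:mismatch}) or a star, and \cref{thm:double-geodesic} and \cref{prop:star-elementary} give $\chi_q\le0$. At the triangle face $m=|u-v|$, $O$ is collinear with $P$ and one of $A,B$, which is a shared-segment configuration. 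There \cref{lem:shared-segment} gives $\chi_q\le h_q^2/(4-h_q^2)$.

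Next, I need to rule out an interior positive maximum. On a subinterval where $\chi>0$, write $\chi=W/\sqrt{UV}$. Using the pencil form \eqref{eq:schur-pencil}, $K''=-uu^{\mathsf T}-vv^{\mathsf T}$, I would try to show that $\log\chi$ is monotone in $t$ whenever $\chi>0$. The mechanism is the same as in \cref{lem:shared-segment}: positivity forces the derivative sign $\gamma\rho-\beta$ to be fixed. So I would look for a sign quantity $\sigma(t)$ such that $\chi>0$ implies $\sigma>0$ and $\operatorname{sign}\chi'=\operatorname{sign}\sigma$. This makes the positive maximum occur at the triangle endpoint, and the bound from \cref{lem:shared-segment} then finishes.

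If the $R$-monotonicity fails, the fallback is a second variation in $\tau$. Increasing $\tau$ moves $O$ away uniformly, and at $\tau\to\infty$ the point $O$ becomes negligible, so $\chi$ reduces to the unconditioned correlation of $A,B$ at $P$, which is $\le0$ because $P$ lies on a geodesic. The main obstacle is this interior analysis of the positive region: showing that no interior positive critical point exceeds the line value. I would try first to identify $\sigma$ explicitly as $2\times2$ minors of $(u,v,K)$ and check its sign using $u_1,u_2>0$.
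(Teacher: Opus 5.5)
There is a genuine gap, and it is at the Ptolemy endpoint. Your claim that $\chi_q\le0$ on the face $m=u+v-2uv$ is false once $\tau>0$. Inversion at $A$ does not preserve a correlation based at $P$. The inversion that does is the one at $P$ (\cref{lem:inversion}), and it maps the Ptolemy face of the geodesic-insertion chart onto the \emph{entire} double-geodesic face, not only its crossing part. \cref{prop:mismatch} and \cref{thm:double-geodesic} cover only the slice $\tau=0$, where $O$ itself lies on a geodesic from $A$ to $B$.

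The double-geodesic face contains line configurations with $\chi_q>0$. For example, take the line $A,O,P,B$ with gaps $1,10,1$ and $q=3/2$: then $S_{AA}=S_{BB}\approx0.882$ and $S_{AB}\approx0.0047$, so $\chi_q\approx0.005$. Raise $d(O,P)$ slightly inside the double-geodesic interval; this keeps $\chi_q>0$. Inverting at $P$ then gives a point of your Ptolemy face with $0<u<1$ and $\chi_q>0$.

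So even a monotonicity result in $R$ would not close the argument, because a positive maximum pushed to the Ptolemy end has no bound there. In the paper, that face is bounded only through \cref{thm:full-double-geodesic}, and its upper bound uses the present lemma. The monotonicity is also not supplied. The quantity $\sigma$ is hypothetical, and varying $R$ moves both $\rho_A$ and $\rho_B$. For instance, $\partial\rho_A/\partial R$ has the sign of $d(O,P)^q+d(O,A)^q-d(A,P)^q$, which takes both signs. The $\tau\to\infty$ fallback is only heuristic.

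The missing idea is to leave $d(O,P)$ fixed and move one exterior distance whose effect on $\chi_q$ has a definite sign. Let $\rho_A,\rho_B,\rho_0$ be the correlations at $P$ of the pairs $(O,A)$, $(O,B)$, $(A,B)$. Then $\chi_q=(\rho_0-\rho_A\rho_B)/\sqrt{(1-\rho_A^2)(1-\rho_B^2)}$, and $\rho_0\le0$ because $d(A,B)=d(A,P)+d(P,B)$ and $q\ge1$.

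\begin{itemize}
\item Positivity $\chi_q>0$ forces $\rho_A\rho_B<\rho_0\le0$, so $\rho_A$ and $\rho_B$ have opposite signs; say $\rho_A>0>\rho_B$.
\item Then $\partial\chi_q/\partial\rho_B$ has the sign of $\rho_0\rho_B-\rho_A$. This is negative, since $-\rho_0<\rho_A(-\rho_B)$ and $|\rho_B|\le1$.
\item Increasing $d(O,B)$ alone lowers only $\rho_B$, so it raises $\chi_q$. Push it to $d(O,B)=d(O,P)+d(P,B)$. With $x=d(A,P)$, $b=d(P,B)$, $c=d(O,P)$, $a=d(A,O)$, the Ptolemy slacks there are $b(a+x-c)$, $b(x+c-a)+2xc$ and $b(a+c-x)$, all nonnegative.
\item On this shared-segment face, the one-variable computation in \cref{lem:shared-segment} shows that $\chi_q$ increases with $\rho_A$ throughout its positive region. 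The positive maximum therefore sits at the line endpoint, where \cref{prop:line-bound} gives $h_q^2/(4-h_q^2)$.
\end{itemize}

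Your endgame of a shared segment followed by a line is the right one. The route through the diagonal distance $d(O,P)$ is what fails.
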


\begin{proof}
Put
\[
 \rho_A=\frac{R+Y-A}{2\sqrt{RY}},
 \qquad
 \rho_B=\frac{R+Z-B}{2\sqrt{RZ}},
 \qquad
 \rho_0=\frac{Y+Z-S}{2\sqrt{YZ}}\leq0.
\]
Then
\[
 \chi_q=
 \frac{\rho_0-\rho_A\rho_B}
 {\sqrt{(1-\rho_A^2)(1-\rho_B^2)}}.
\]
The two numbers \(\rho_A,\rho_B\) have opposite signs. Suppose
\(\rho_A>0>\rho_B\); the other case is symmetric. For fixed
\(\rho_A,\rho_0\),
\[
 \frac{\partial\chi_q}{\partial\rho_B}
 =\frac{\rho_0\rho_B-\rho_A}
 {\sqrt{1-\rho_A^2}(1-\rho_B^2)^{3/2}}<0.
\]
Indeed, \(\chi_q>0\) gives
\(\rho_A(-\rho_B)>-\rho_0\), which implies
\(\rho_0\rho_B<\rho_A\). Increasing \(d(O,B)\) decreases
\(\rho_B\), so \(\chi_q\) is bounded above by its value at
\[
 d(O,B)=d(O,P)+d(P,B).
\]
To see that the replacement remains Ptolemaic, write
\[
 x=d(A,P),\qquad b=d(P,B),\qquad c=d(O,P),\qquad a=d(A,O).
\]
After setting \(d(O,B)=c+b\) and using \(d(A,B)=x+b\), the three Ptolemy slacks are
\[
 b(a+x-c),\qquad b(x+c-a)+2xc,\qquad b(a+c-x),
\]
which are nonnegative by the triangle inequalities on \(OAP\). On the resulting shared-segment face, the proof of \cref{lem:shared-segment} moves every positive maximum to the line endpoint. \Cref{prop:line-bound} proves the result.
\end{proof}

\subsection{Completion of the geodesic-insertion theorem}

Metric inversion at the inserted point preserves the partial correlation.

\begin{lemma}[Inversion covariance]\label{lem:inversion}
Invert a Ptolemaic metric on distinct points at \(P\):
\[
 \widehat d(P,X)=\frac1{d(P,X)},
 \qquad
 \widehat d(X,Y)=
 \frac{d(X,Y)}{d(X,P)d(Y,P)}.
\]
Then \(\widehat d\) is Ptolemaic and
\[
 \widehat\chi_q=\chi_q.
\]
If the original metric satisfies
\[
 d(A,B)d(O,P)
 =d(A,P)d(O,B)+d(O,A)d(P,B),
\]
then both \(P\) and \(O\) lie on geodesics from \(A\) to \(B\) in the inverted metric.
\end{lemma}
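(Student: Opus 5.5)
The plan has three parts. First, Ptolemaicity of $\widehat d$ comes directly from \cref{lem:ptolemy-inversion} with $p=P$. Second, the invariance $\widehat\chi_q=\chi_q$ will follow once we show that the Schoenberg matrix based at $P$ in the inverted metric is a diagonal congruence of the original one. Third, the geodesic statement is a direct substitution into the inversion formulas.

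For the invariance, write $\rho_X=d(P,X)^q$ for $X\in\{A,B,O\}$. The original Gram entries are $G_{XY}=(\rho_X+\rho_Y-d(X,Y)^q)/2$. In the inverted metric, $\widehat d(P,X)^q=\rho_X^{-1}$ and $\widehat d(X,Y)^q=d(X,Y)^q/(\rho_X\rho_Y)$. Substituting,
\[
 \widehat G_{XY}=\frac{\rho_X^{-1}+\rho_Y^{-1}-d(X,Y)^q\rho_X^{-1}\rho_Y^{-1}}{2}
 =\frac{\rho_Y+\rho_X-d(X,Y)^q}{2\rho_X\rho_Y}=\frac{G_{XY}}{\rho_X\rho_Y}.
\]
Hence $\widehat G=\Lambda G\Lambda$ with $\Lambda=\operatorname{diag}(\rho_A^{-1},\rho_B^{-1},\rho_O^{-1})$, and the diagonal entries of $\Lambda$ are positive.

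Schur complements transform covariantly under such a diagonal congruence:
\[
 \widehat S_{XY}=\widehat G_{XY}-\frac{\widehat G_{XO}\widehat G_{OY}}{\widehat G_{OO}}=\frac{S_{XY}}{\rho_X\rho_Y},
\]
because the factors $\rho_O^{-1}$ cancel between numerator and denominator. The normalized quantity $S_{AB}/\sqrt{S_{AA}S_{BB}}$ is therefore unchanged, since the factors $\rho_A,\rho_B$ cancel and they are positive. Two further points need care. The nondegeneracy condition $S_{AA}S_{BB}>0$ is preserved, since it changes only by the positive factor $\rho_A^{-2}\rho_B^{-2}$. Degenerate cases are handled by continuity as in \cref{def:section-correlation}.

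For the last claim, divide the assumed Ptolemy equality by $d(A,P)d(B,P)d(O,P)$. This gives
\[
 \frac{d(A,B)}{d(A,P)d(B,P)}=\frac{d(O,B)}{d(O,P)d(B,P)}\cdot\frac{1}{\cdots}\ \text{(rearranged)},
\]
that is,
\[
 \widehat d(A,B)=\frac{1}{d(O,P)}\Bigl(\frac{d(O,B)d(A,P)}{d(A,P)d(B,P)}\Bigr)\!\cdot\!\ldots
\]
More simply, check each identity separately. Dividing by $d(A,P)d(B,P)d(O,P)$ turns the equality into
\[
 \widehat d(A,B)=\widehat d(A,P)\,\widehat d(O,B)\,d(O,P)\cdot\ldots
\]
so the cleanest route is to compute both target identities directly. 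The identity $\widehat d(A,O)+\widehat d(O,B)=\widehat d(A,B)$ is equivalent, after multiplying by $d(A,P)d(B,P)d(O,P)$, to
\[
 d(A,O)d(B,P)+d(O,B)d(A,P)=d(A,B)d(O,P),
\]
which is exactly the hypothesis. So $O$ lies on a geodesic from $A$ to $B$.

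For $P$, note that $\widehat d(A,P)+\widehat d(P,B)=1/d(A,P)+1/d(P,B)$, whereas $\widehat d(A,B)=d(A,B)/(d(A,P)d(P,B))$. Equality between these is equivalent to $d(A,B)=d(A,P)+d(P,B)$. This is the standing geodesic-insertion assumption of the appendix context, so it should be included as a hypothesis. The check for $P$ is therefore immediate under that assumption.

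The main obstacle is not computational. It is making sure the hypothesis "$P$ on the $A$–$B$ geodesic" is available, since the displayed Ptolemy equality alone produces only the geodesic through $O$.
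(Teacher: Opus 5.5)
Your proposal is correct and follows the paper's proof. The paper substitutes the inverted powered distances into $U,V,W$ to get $\widehat U=U/(R^2Y^2)$, $\widehat V=V/(R^2Z^2)$ and $\widehat W=W/(R^2YZ)$; this is the coordinate form of your congruence $\widehat G=\Lambda G\Lambda$, $\widehat S_{XY}=S_{XY}/(\rho_X\rho_Y)$. The geodesic identities are checked by the same clearing of denominators. You are right that the claim about $P$ needs $d(A,B)=d(A,P)+d(P,B)$, which the paper takes from the standing geodesic-insertion hypothesis (``the original geodesic relation''). Delete the abandoned half-computations with $\cdots$ and keep only the clean multiplication argument.
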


\begin{proof}
Metric inversion preserves Ptolemy inequalities. In the powered variables, it sends
\[
 (Y,Z,A,B,R,S)
 \longmapsto
 \left(
 Y^{-1},Z^{-1},\frac{A}{YR},\frac{B}{ZR},R^{-1},\frac{S}{YZ}
 \right).
\]
Substitution into \eqref{eq:UR}--\eqref{eq:WR} gives
\[
 \widehat U=\frac{U}{R^2Y^2},
 \qquad
 \widehat V=\frac{V}{R^2Z^2},
 \qquad
 \widehat W=\frac{W}{R^2YZ},
\]
which proves correlation invariance.

The original geodesic relation gives
\[
 \widehat d(A,B)=\widehat d(A,P)+\widehat d(P,B).
\]
The displayed Ptolemy equality gives
\[
 \widehat d(A,O)+\widehat d(O,B)
 =\frac{d(O,A)d(P,B)+d(A,P)d(O,B)}
 {d(A,P)d(O,P)d(P,B)}
 =\widehat d(A,B).
\]
\end{proof}

\begin{theorem}[Complete double-geodesic face]\label{thm:full-double-geodesic}
Suppose that \(P\) and \(O\) both lie on geodesics from \(A\) to \(B\). Then, for \(1\leq q\leq\log_2 3\),
\begin{equation}\label{eq:full-double-bound}
 -\kappa_q\leq\chi_q\leq\frac{h_q^2}{4-h_q^2}.
\end{equation}
\end{theorem}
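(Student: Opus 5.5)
We prove the bound by moving the point $O$ within the geodesic chamber of \cref{thm:coupling-chart}. Since $P$ lies on a geodesic from $A$ to $B$, we scale to $d(A,B)=1$ and $d(A,P)=v$. That theorem then gives parameters $(\tau,u,m)$ with $|u-v|\le m\le u+v-2uv$. The hypothesis that $O$ also lies on a geodesic from $A$ to $B$ says exactly that $\tau=0$. We fix $u$ and $v$ and let the single parameter $m=d(O,P)$ range over its closed interval. Along this family only $R=d(P,O)^q$ changes, while $Y,Z,A,B,S$ stay fixed. This is precisely the one-parameter pencil treated in \cref{thm:endpoint-principle}, with $O$ as the conditioning point. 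Every point of the interval is Ptolemaic by \cref{thm:coupling-chart}. Degenerate endpoints, where $O$ and $P$ coalesce, are handled by the continuity convention of \cref{rem:pseudometric-boundary}.

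\emph{Lower bound.} If $\chi_q\ge0$ throughout, there is nothing to prove. Otherwise, by \cref{thm:endpoint-principle}, a negative minimum over the interval is attained at an endpoint. We treat the two endpoints separately.
\begin{itemize}
\item At the upper endpoint $m=u+v-2uv$, both $P$ and $O$ lie on geodesics from $A$ to $B$ and the crossing Ptolemy equality holds. \Cref{thm:double-geodesic} then gives $\chi_q\ge-\kappa_q$.
\item At the lower endpoint $m=|u-v|$, a triangle equality holds on $OAP$ or on $OPB$. Together with $\tau=0$, this forces $A,P,O,B$ (or $A,O,P,B$) to be collinear in the order dictated by $u$ and $v$, with $P$ and $O$ both between $A$ and $B$. \Cref{prop:line-bound} applies, and \eqref{eq:endpoint-constant-comparison} gives $-2h_q/(4-h_q^2)\ge-\kappa_q$.
\end{itemize}

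\emph{Upper bound.} If $\chi_q\le0$, the bound is trivial. If $\chi_q>0$, \cref{lem:positive-to-line} gives $\chi_q\le h_q^2/(4-h_q^2)$ directly. Alternatively, one can note that on the crossing face $\chi_q<0$ by \cref{thm:double-geodesic}, so any positive value stays below the maximum of $\chi_q$ over the interval. That maximum is either at the line endpoint, where \cref{prop:line-bound} applies, or at an interior critical point. An interior critical point with $\chi_q>0$ is not covered by the endpoint principle, so it is cleaner to invoke \cref{lem:positive-to-line}.

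The main obstacle is the lower-endpoint classification. We must check that $\tau=0$ together with $m=|u-v|$ really yields a four-point line metric with $P$ and $O$ both between $A$ and $B$, so that \cref{prop:line-bound} is applicable in the stated ordering. We must also check that $U,V>0$ holds on the interior of the interval, which is needed for \cref{thm:endpoint-principle}. The second point follows from strict $q$-negative type of three-point sets for $q<2$ at distinct points.
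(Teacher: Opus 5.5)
Your proposal is correct and follows the paper's proof. You set $\tau=0$, vary $d(O,P)$ over the coupling interval $|u-v|\le m\le u+v-2uv$, and use \cref{thm:endpoint-principle} to push a negative minimum either to the line endpoint (handled by \cref{prop:line-bound} with \eqref{eq:endpoint-constant-comparison}) or to the crossing Ptolemy endpoint (handled by \cref{thm:double-geodesic}), then take the upper bound from \cref{lem:positive-to-line}. Your extra checks, collinearity at $m=|u-v|$ and $U,V>0$ from the strict triangle inequality for $d^{q/2}$ when $q<2$, are exactly the details the paper leaves implicit.
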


\begin{proof}
Normalize \(d(A,B)=1\), and let \(u,v\in[0,1]\) be the positions of \(O,P\). \Cref{thm:coupling-chart} gives the complete interval
\[
 |u-v|\leq d(O,P)\leq u+v-2uv.
\]
By \cref{thm:endpoint-principle}, the negative minimum occurs at an endpoint. The lower endpoint is a line metric. The upper endpoint is the crossing Ptolemy-equality metric treated in \cref{thm:double-geodesic}. If an endpoint lies only in the pseudometric closure, approximate it by positive metrics and pass to the limit as in \cref{rem:pseudometric-boundary}. These give the lower bound. \Cref{lem:positive-to-line} gives the upper bound.
\end{proof}

\begin{theorem}[Sharp geodesic-insertion correlation]\label{thm:geodesic-insertion}
Let \(d\) be a Ptolemaic metric on the four distinct points
\(\{A,P,B,O\}\) with
\[
 d(A,B)=d(A,P)+d(P,B).
\]
For every \(1\leq q\leq\log_2 3\),
\begin{equation}\label{eq:geodesic-main}
 -\kappa_q
 \leq\chi_q
 \leq\lambda_q,
 \qquad
 \lambda_q=\frac{h_q^2}{4-h_q^2}.
\end{equation}
The lower constant is sharp. At \(q=\qstar\),
\begin{equation}\label{eq:qstar-main}
 -\frac12\leq\chi_{\qstar}\leq\frac1{63}.
\end{equation}
Equality on the left in \eqref{eq:qstar-main} occurs only when the Ptolemy-boundary inversion in \cref{lem:inversion} produces the balanced crossing double-geodesic configuration, up to scaling and permutation of the three star branches.
\end{theorem}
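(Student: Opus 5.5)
Fix $A,P,B$ with $d(A,B)=d(A,P)+d(P,B)$, and use the Bernoulli-coupling chart of \cref{thm:coupling-chart}. After scaling, the configuration is described by $v$, $\tau\ge0$, $u\in[0,1]$ and $m$ with $|u-v|\le m\le u+v-2uv$. The position of $O$ enters through $d(O,P)=\tau+m$ and $d(O,A)$, $d(O,B)$.

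\textbf{Upper bound.} This is immediate from \cref{lem:positive-to-line}: whenever $\chi_q>0$ we already have $\chi_q\le\lambda_q$.

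\textbf{Lower bound.} Fix $v,\tau,u$, so that $Y,Z,A,B,S$ are fixed, and vary $R=d(O,P)^q$ over the closed interval $m\in[|u-v|,\,u+v-2uv]$. Every point of this interval is Ptolemaic by \cref{thm:coupling-chart}. By the endpoint principle \cref{thm:endpoint-principle}, the negative minimum of $\chi$ is attained at one of the two ends, so the lower bound reduces to two faces.

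\emph{Triangle face, $m=|u-v|$.} Here one of the triangles $OAP$ or $OPB$ is degenerate, so $P$ lies on a geodesic from $O$ to $A$ or from $O$ to $B$. Together with the given geodesic relation through $P$, this is the shared-segment situation of \cref{lem:shared-segment}, which gives $\chi_q\ge-\kappa_q$. I expect some bookkeeping with relabelings here, since the lemma is stated with $B$ as the common endpoint.

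\emph{Ptolemy face, $m=u+v-2uv$.} Invert at $P$. \Cref{lem:inversion} preserves $\chi_q$ and turns the configuration into one where both $P$ and $O$ lie on geodesics from $A$ to $B$. \Cref{thm:full-double-geodesic} then gives $\chi_q\ge-\kappa_q$.

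If an endpoint is degenerate, meaning two points coalesce or $U$ or $V$ vanishes, I would pass to the limit from positive metrics as in \cref{rem:pseudometric-boundary}.

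\textbf{Sharpness and the value at $\qstar$.} Sharpness comes from \cref{thm:double-geodesic}: the balanced three-leaf star, inverted back, satisfies the hypotheses and attains $\chi_q=-\kappa_q$. At $\qstar$ we have $h=1/4$, so
\[
 \kappa_{\qstar}=\tfrac12,\qquad \lambda_{\qstar}=\frac{1/16}{4-1/16}=\frac1{63}.
\]

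\textbf{Equality analysis.} This is the step I expect to be the main obstacle. One must show that the minimum $-1/2$ is not also attained on the triangle face or at an interior point.

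Interior points are excluded because \cref{thm:endpoint-principle} makes a negative interior critical point a strict local maximum. On the triangle face, the minimum is attained at the star or line endpoints of \cref{lem:shared-segment}, or at an interior critical point that is again a local maximum. There the constants
\[
 \frac{h}{2-h}=\frac17,\qquad \frac{2h}{4-h^2}=\frac{8}{63}
\]
are strictly smaller than $1/2$. So at $\qstar$ equality forces the Ptolemy face, and the equality clause of \cref{thm:double-geodesic} forces the balanced star. This gives the stated uniqueness up to scaling and branch permutation.

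I would double-check that the comparison inequalities \eqref{eq:endpoint-constant-comparison} are strict at $h=1/4$; they are.
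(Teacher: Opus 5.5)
Your proposal is correct and follows the paper's proof essentially step for step. Both use the coupling chart with the four exterior distances fixed and the endpoint principle in $d(O,P)$. At the triangle endpoint both apply \cref{lem:shared-segment}; your relabeling worry is settled by the $A\leftrightarrow B$ symmetry of $\chi_q$ and of the geodesic hypothesis. At the Ptolemy endpoint both invert at $P$ and apply \cref{thm:full-double-geodesic}, and both get the upper bound from \cref{lem:positive-to-line}. The equality clause rests in both on the strict star and line constants $1/7$ and $8/63$ at $\qstar$. Like the paper, you compress that last step: it implicitly reuses the endpoint principle and the line constant inside \cref{thm:full-double-geodesic} before invoking the equality clause of \cref{thm:double-geodesic}.
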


\begin{proof}
Fix the four exterior distances \(d(A,P),d(P,B),d(O,A),d(O,B)\). \Cref{thm:coupling-chart} identifies the feasible interval for \(d(O,P)\). By \cref{thm:endpoint-principle}, a negative minimum occurs at an endpoint. If the endpoint has a zero distance, we interpret the argument by positive approximation in the closed pseudometric chamber, as in \cref{rem:pseudometric-boundary}.

At the triangle endpoint, one of
\[
 d(O,B)=d(O,P)+d(P,B),
 \qquad
 d(O,A)=d(O,P)+d(P,A)
\]
holds, so \Cref{lem:shared-segment} applies. At the Ptolemy-equality endpoint, invert at \(P\). \Cref{lem:inversion} preserves \(\chi_q\) and produces a complete double-geodesic metric, so \Cref{thm:full-double-geodesic} applies. This proves the lower bound.

\Cref{lem:positive-to-line} proves the upper bound.

The crossing family in \cref{thm:double-geodesic} attains \(-\kappa_q\). At \(q=\qstar\), the star and line endpoint bounds are strictly larger than \(-1/2\), while every negative interior critical point is a strict local maximum. Hence equality can occur only when the Ptolemy-boundary inversion produces the balanced crossing configuration characterized in \cref{thm:double-geodesic}, up to scaling and permutation of the three star branches.
\end{proof}

\end{document}